\theoremstyle{plain}
\newtheorem{theorem}{Theorem}
\newtheorem{lemma}[theorem]{Lemma}
\newtheorem{proposition}[theorem]{Proposition}
\newtheorem{corollary}[theorem]{Corollary}
\newtheorem{conjecture}{Conjecture}
\theoremstyle{definition}
\newtheorem{definition}[theorem]{Definition}
\newtheorem{example}[theorem]{Example}
\theoremstyle{remark}
\newtheorem{remark}[theorem]{Remark}
\DeclareMathOperator{\im}{\mathrm im}
\newcommand{\lk}{{\rm lk}}
\date{}
\title{Extremal Betti Numbers and Persistence in Flag Complexes}
\newcommand{\Ind}[0]{{\rm Ind}}
\newcommand{\BF}[0]{\beta^{{\rm FL}}}
\newcommand{\BT}[0]{T\beta^{{\rm FL}}}
\newcommand{\BC}[0]{\mathcal{B}^{{\rm FL}}}
\newcommand{\stt}{{\rm st}}
\newcommand{\TG}{\mathcal{T}}
\author{Lies Beers}
\author{Magnus Bakke Botnan}
\affil{Vrije Universiteit Amsterdam, Amsterdam, The Netherlands}
\begin{document}

\maketitle

\begin{abstract}
We investigate several problems concerning extremal Betti numbers and persistence in filtrations of flag complexes. For graphs on $n$ vertices, we show that $\beta_k(X(G))$ is maximal when $G=\mathcal{T}_{n,k+1}$, the Turán graph on $k+1$ partition classes, where $X(G)$ denotes the flag complex of $G$. Building on this, we construct an edgewise (one edge at a time) filtration $\mathcal{G}=G_1\subseteq \cdots \subseteq \mathcal{T}_{n,k+1}$ for which $\beta_k(X(G_i))$ is maximal for all graphs on $n$ vertices and $i$ edges.  Moreover, the persistence barcode $\mathcal{B}_k(X(G))$ achieves a maximal number of intervals, and total persistence, among all edgewise filtrations with $|E(\mathcal{T}_{n,k+1})|$ edges.

For $k=1$, we consider edgewise filtrations of the complete graph $K_n$. We show that the maximal number of intervals in the persistence barcode is obtained precisely when $G_{\lceil n/2\rceil \cdot \lfloor n/2 \rfloor}=\mathcal{T}_{n,2}$. 
Among such filtrations, we characterize those achieving maximal total persistence. We further show that no filtration can optimize $\beta_1(X(G_i))$ for all $i$, and conjecture that our filtrations maximize the total persistence over all edgewise filtrations of $K_n$.
\end{abstract}

\section{Introduction}
\label{sec:intro}
A central theme in topological data analysis (TDA) is the computation of homological invariants from simplicial complexes. These complexes are often constructed from point cloud data, with the \emph{Vietoris--Rips} complex being one of the most widely used constructions. For a finite set \( P \) in a metric space \( (M, d) \), the Vietoris--Rips complex at scale \( r \), denoted \( \mathrm{VR}_r(P) \), includes all simplices \( \sigma = \{p_0, \ldots, p_n\} \) such that \( d(p_i, p_j) \leq r \) for all \( 0 \leq i, j \leq n \). This complex is a \emph{flag complex}, meaning it is the largest simplicial complex with a given underlying graph (the 1-skeleton). In particular, the edges in the connectivity graph of \( P \) at scale $r$ fully determine the higher-dimensional simplices in ${\rm VR}_r(P)$. Varying the scale parameter \( r \) induces a filtration of simplicial complexes:
\[
\mathrm{VR}(P)_{r_0} \hookrightarrow \mathrm{VR}(P)_{r_1} \hookrightarrow \cdots \hookrightarrow \mathrm{VR}(P)_{r_m},
\]
where \( r_0 < r_1 < \cdots < r_m \). Applying \( k \)-dimensional homology over a field \( \mathbf{k} \) to this filtration yields a persistence barcode in degree \( k \), denoted \( \mathcal{B}_k(\mathrm{VR}(P)) \). This barcode, consisting of intervals \([a, b)\), encodes the birth and death of topological features across scales and serves as a powerful tool for extracting topological information from \( P \). When \( P \) is a sufficiently dense sampling of an underlying space \( X \), the number of ``long bars'' in \( \mathcal{B}_k(\mathrm{VR}(P)) \) determines the \( k \)-th Betti number \( \beta_k(X) \), which describes the \( k \)-dimensional topological features of \( X \) \cite{chazal2008towards}. For a comprehensive introduction to topological data analysis, we refer the reader to \cite{dey2022computational}.

Given the central role of such filtrations in TDA, this paper addresses a fundamental question: how many topological features can arise in a data set of \( n \) points? Specifically, we investigate bounds on quantities such as the maximal value of \( \beta_k(\mathrm{VR}_r(P)) \), the maximal number of intervals in \( \mathcal{B}_k(\mathrm{VR}(P)) \), the length of the longest interval, and the sum of the lengths of the intervals (the total persistence).

\begin{remark}
In this paper, we focus on edgewise filtrations of graphs, i.e., sequences of graphs $ \mathcal G = 
G_1 \subseteq G_2 \subseteq \cdots \subseteq G_m = \bigl\{G_i\bigr\}_{i=1}^m,$
on $n$ vertices, where $G_{i+1}$ and $G_i$ differ by precisely one edge. Notably, every filtration of flag complexes can be realized as the Vietoris--Rips filtration of an appropriate metric on the vertices of $G_m$; see \cref{sec:appVR}. Thus, our results are directly applicable to data analysis using persistent homology.
\end{remark}

\subsection{Overview and contributions}
In \cref{sec:background}, we introduce the relevant background material and compute the Betti numbers of flag complexes on Turán graphs.

In \cref{sec:maxbetti}, we examine extremal Betti numbers and establish tight upper bounds on $\beta_k(F)$, where $F$ is a flag complex on $n$ vertices (\cref{thm:turanopt}). This upper bound is achieved by the Turán graph $\mathcal{T}_{n,k+1}$.

In \cref{sec:optimalfiltration}, we study filtrations of flag complexes on $n$ vertices with at most $e$ edges, where $e$ is the number of edges in $\mathcal{T}_{n,k+1}$. We prove that our filtration maximizes $\beta_k$ at all filtration steps (\cref{thm:turanoptfilt}).

In \cref{sec:barlength}, we analyze the longest possible interval in the barcode of a filtration of flag complexes on $n$ vertices (\cref{cor:barlengths}).

Additionally, in \cref{sec:maxintervals}, we focus on homology degree $1$ and show that any filtration of flag complexes on $n$ vertices will achieve the maximal number of intervals in its barcode in degree $1$ if and only if the filtration contains $\mathcal{T}_{n,2}$ (\cref{cor:maxnmbrbars}).

In \cref{sec:liesmagic}, we explore flag filtrations in homology degree $1$ that achieve both a maximal number of intervals and maximal total persistence. Equivalently, we maximize the total persistence of all filtrations of graphs containing $G_{\lceil n/2\rceil \cdot \lfloor n/2 \rfloor} = \mathcal{T}_{n,2}$. Given the importance of Turán graphs in extremal graph theory, we believe that identifying extremal filtrations of Turán graphs is interesting in its own right. Our result relies on an elaborate combinatorial analysis that precisely classifies the extremal filtrations (\cref{thm:lies}).

Finally, the paper concludes with a discussion in which we outline several important conjectures for future work.

\subsection{Related work}
\label{sec:relatedwork}
The study of extremal values of $\mathbb{Z}$-linear functions on the $f$- and $\beta$-vectors of simplicial complexes with $n$ vertices has a rich history. Classical questions include determining the extremal Euler characteristic and the maximal sum of Betti numbers. These problems were addressed for general simplicial complexes in \cite{bjorner1988extended} and later extended to arbitrary $\mathbb{Z}$-linear functions in \cite{kozlov1997convex}. Specifically, \cite[Theorem 3.4]{kozlov1997convex} shows that for flag complexes, extremal values are always realized by $\mathcal{T}_{n,k}$ for some $k$. Consequently, \cref{thm:turanopt} follows directly from \cite[Theorem 3.4]{kozlov1997convex}. 

Our proof of \cref{thm:turanopt} adapts the approach of \cite[Theorem 1.1]{adamaszek2014extremal}, which gives an alternative proof for the extremal total Betti number of flag complexes. The key observation in our proof of \cref{thm:turanopt} plays a central role in our results on extremal interval lengths (\cref{sec:barlength}). 

Another related work is \cite{goff2011extremal}, which examines asymptotic bounds for $\beta_k$ in Vietoris--Rips complexes of point samples in $\mathbb{R}^d$. Similar questions have been explored for Čech complexes; see, e.g., \cite{edelsbrunner2024maximum}. 

Our work diverges from earlier work by considering \emph{filtered} flag complexes. This approach is closely tied to the problem of finding extremal values for complexes with precisely $n$ vertices and $e$ edges; we return to this in \cref{sec:discussion}. Importantly, and in contrast to classical work, graphs achieving extremal values need not have $\mathcal{T}_{n,k}$ as a subgraph.

\section{Background}\label{sec:background}
{\bf Graphs and Simplicial Complexes:}
For a graph $G = (V,E)$, we write $\{v,w\}$ for the edge connecting the vertices $v$ and $w$. For a vertex $v\in V$, we let $N_G(v)\coloneqq\bigl\{w\colon \{v,w\}\in E\bigr\}$ denote the set of neighbors of $v$, and we let $N_G[v] \coloneqq N_G(v) \cup \{v\}$. The \emph{degree} of $v$ is $\deg(v)\coloneqq \deg_G(v) \coloneqq |N_G(v)|$, and $K_n$ denotes the complete graph on $n$ vertices. 

The \emph{complement graph} of $G$ is the graph $\overline G$ on the same vertices as $G$ where two distinct vertices of $\overline G$ are adjacent if and only if they are not adjacent in $G$. The \emph{join} of disjoint graphs $G_1$ and $G_2$ is the graph $G_1\vee G_2$ with vertex set $V(G_1)\sqcup V(G_2)$ and edge set $E(G_1)\cup E(G_2) \cup \bigl\{\{v_1,v_2\} \colon v_1\in V(G_1), v_2\in V(G_2)\bigr\}$.
The \emph{complete bipartite graph} $K_{n_1,n_2}$ is the graph $G_1\vee G_2$, where $G_i$ is the empty graph with $n_i$ vertices.
The sets $V(G_1)$ and $V(G_2)$ are called the \emph{partition classes} of $K_{n_1,n_2}$.

For a vertex $v$ in a simplicial complex $K$, the \emph{link} and \emph{(closed) star} of $v$ are the simplicial complexes given respectively by
\[\lk_K(v) =\{\tau \in K : v\not\in \tau, \{v\}\cup \tau\in K\}\qquad \qquad \stt_K(v) =\{\tau \in K : \{v\}\cup \tau\in K\}.\]
We write $K-v$ for the simplicial complex with simplices $\tau\in K$ for which $v\not\in K$. 

For a graph $G$, we let $X(G)$ denote the simplicial complex with $m$-simplices given by the $(m+1)$-cliques in $G$.  A simplicial complex $K$ is called a \emph{flag complex} if $K=X(G)$ for some graph $G$. The \emph{independence complex} of $G$ is the simplicial complex $\Ind(G) = X(\overline G)$.
Examples of flag and independence complexes are given in Figure \ref{fig:complexexample}. 
\begin{figure}
    \centering
    \includegraphics[scale=0.75]{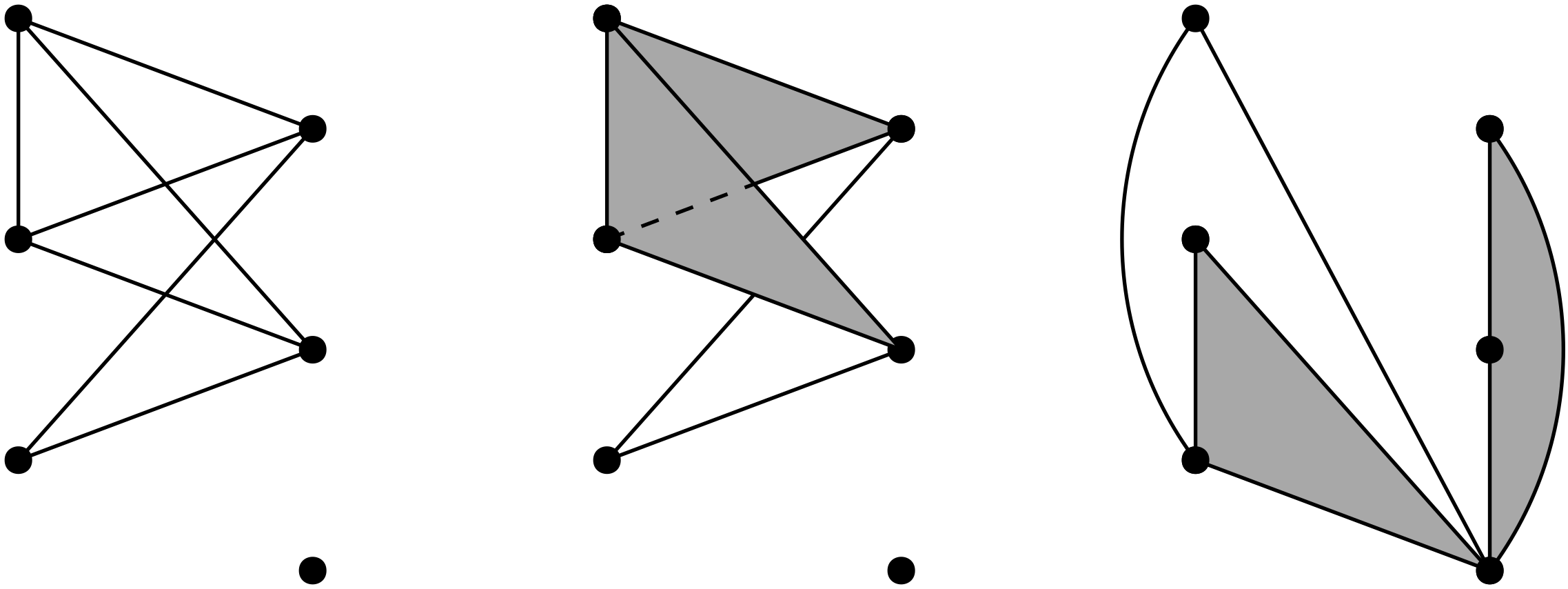}
    \caption{A graph (left), its flag complex (middle) and independence complex (right).}
    \label{fig:complexexample}
\end{figure}

{\bf Homology:}
For a simplicial complex $K$, we let $\beta_k(K)$ denote the dimension of the reduced homology group $\tilde{H}_k(K; \mathbf{k})$, and $C_k(K)$, denotes the vector space of $k$-chains. We use coefficients in a fixed, but arbitrary, field $\mathbf{k}$; our optimal constructions are torsion-free and thus our results do not depend on the choice of coefficient field. For that reason, we simply write $\tilde{H}_k(K)$. We also employ the notation $\BF_k(G) = \beta_k(X(G))$ for a graph $G$.

{\bf Persistent Homology:} A filtration $\mathscr{K}$ of simplicial complexes is a collection of simplicial complexes $\{K_i\}_{i=1}^m$ such that $K_i\subseteq K_j$ for $i\leq j$. Applying $\tilde{H}_k(-;\mathbf{k})$ to a filtration yields a sequence of vector spaces and linear maps $\tilde{H}_k(\mathscr{K}):\tilde{H}_k(K_1)\to \cdots \to \tilde{H}_k(K_m)$ called a \emph{persistence module}. Provided all the vector spaces are finite-dimensional, $\tilde{H}_k(\mathscr{K})$ is uniquely described by a collection of intervals in $\{1, \ldots, m\}$, called the \emph{degree $k$ barcode of $\mathscr{K}$}. We shall denote this barcode by $\mathcal{B}_k(\mathscr{K})$. The \emph{total persistence} of $\mathcal{K}$ is given by 
\[T\beta_k(\mathcal{K})= \sum_{[a,b)\in \mathcal{B}_k(\mathcal{K})} \bigl(b-a\bigr) = \sum_{i=1}^m \beta_k(K_i). \]

For a more thorough introduction to persistent homology, (generalized) persistence modules, and examples, see e.g., \cite[Chapter 3]{dey2022computational} or \cite{botnan2023introduction}.

For a filtration $\mathcal{G}$ of graphs (1-dimensional simplicial complexes), we get a filtration $X(\mathcal{G})$ of simplicial complexes by taking the flag complex at every index. If $G_{i+1}-G_i$ is a single edge for all $i$, then we say that the filtration $\mathcal{G}$ is \emph{edgewise}. We shall employ the notation $\BC_k(G) = \mathcal{B}_k(X(\mathcal{G}))$. 
\begin{example}
    An edgewise filtration of $K_5$ can be found in Figure \ref{fig:filtK5}.
    \begin{figure}
        \centering
        \includegraphics[scale=0.75]{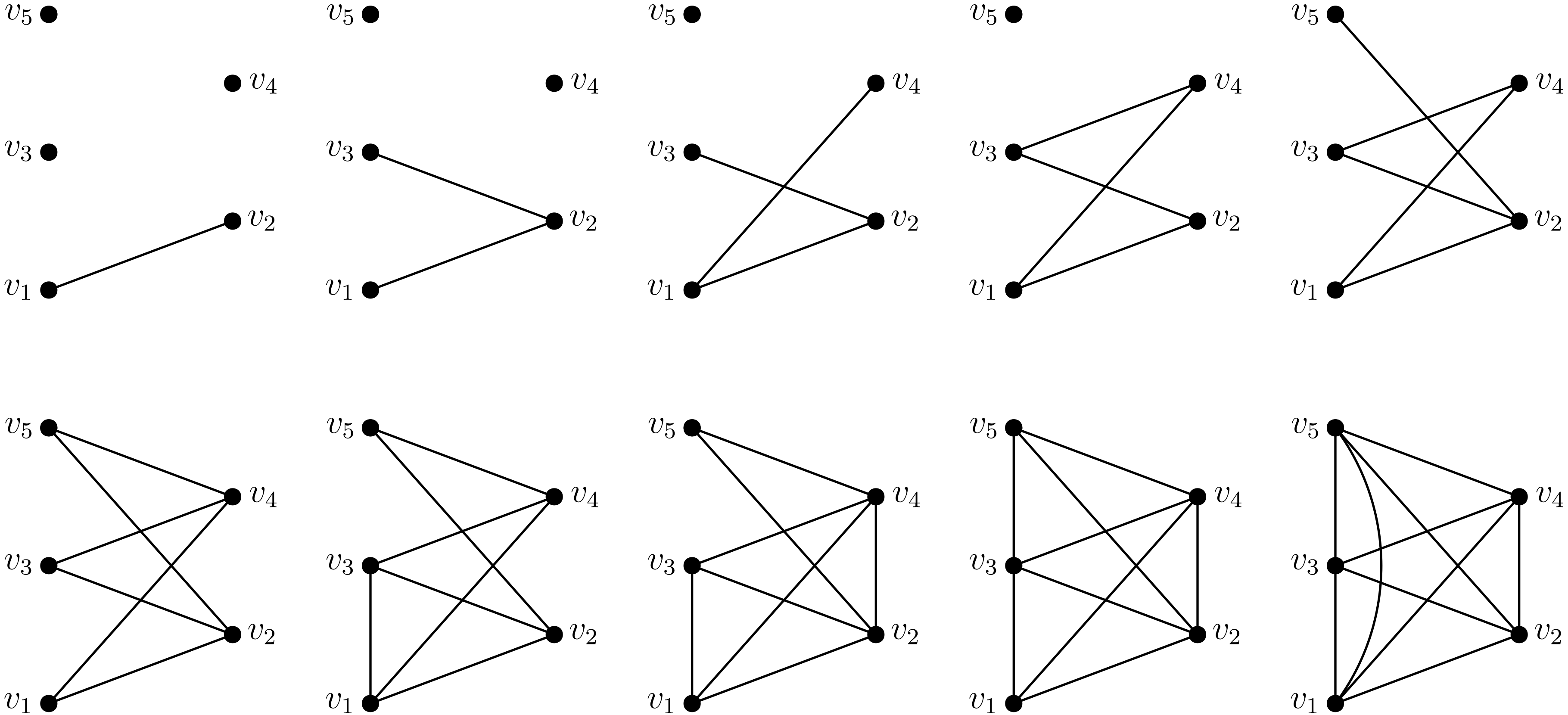}
        \caption{An edgewise filtration of $K_5$.}
        \label{fig:filtK5}
    \end{figure}
\end{example}

\subsection{Elementary homological properties}
The following two lemmas are well-known and important in combinatorial topology. For completeness, their proofs can be found in \cref{subsec:appeelementary}.
\begin{lemma}
\label{lem:MV-lk}
Let $K$ be a simplicial complex and $v\in V(K)$ a vertex. Then, for all $k\geq 1$,\[ \beta_k(K) \leq \beta_k(K-v) + \beta_{k-1}(\lk_K(v)).\]
\end{lemma}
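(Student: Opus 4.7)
The plan is to prove this via a Mayer--Vietoris argument applied to the decomposition of $K$ into the subcomplex $K-v$ and the closed star $\stt_K(v)$. The key geometric observation is that $K = (K-v) \cup \stt_K(v)$, and that the intersection $(K-v) \cap \stt_K(v)$ consists of exactly the simplices of $\stt_K(v)$ that do not contain $v$, which is precisely $\lk_K(v)$.

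Next, I would invoke the reduced Mayer--Vietoris long exact sequence for this pair of subcomplexes, namely
\[ \cdots \to \tilde{H}_k(\lk_K(v)) \to \tilde{H}_k(K-v) \oplus \tilde{H}_k(\stt_K(v)) \to \tilde{H}_k(K) \to \tilde{H}_{k-1}(\lk_K(v)) \to \cdots \]
Since the closed star $\stt_K(v)$ is a cone with apex $v$, it is contractible, so $\tilde{H}_k(\stt_K(v)) = 0$ for all $k$. The sequence then collapses to
\[ \cdots \to \tilde{H}_k(K-v) \to \tilde{H}_k(K) \xrightarrow{\partial} \tilde{H}_{k-1}(\lk_K(v)) \to \tilde{H}_{k-1}(K-v) \to \cdots \]

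From this exact sequence, $\tilde{H}_k(K)$ fits into a short exact sequence whose kernel is a quotient of $\tilde{H}_k(K-v)$ and whose image under $\partial$ is a subspace of $\tilde{H}_{k-1}(\lk_K(v))$. Taking dimensions, I conclude
\[ \beta_k(K) = \dim\ker\partial + \dim\im\partial \leq \beta_k(K-v) + \beta_{k-1}(\lk_K(v)),\]
as desired. The argument is entirely mechanical once the decomposition and the contractibility of the star are in place; there is no substantive obstacle, and the restriction $k \geq 1$ is exactly what is needed to keep the Mayer--Vietoris sequence in its reduced form valid without boundary-term corrections.
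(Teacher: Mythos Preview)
Your proof is correct and follows essentially the same approach as the paper: decompose $K$ as $(K-v)\cup \stt_K(v)$ with intersection $\lk_K(v)$, note that the closed star is a cone and hence acyclic, and read off the inequality from the reduced Mayer--Vietoris long exact sequence via $\beta_k(K)=\dim\ker\partial+\dim\im\partial$.
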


\begin{lemma}
\label{lem:disjointunion}
    For any two graphs $G$ and $H$, and $k\geq -1$, 
    \[ \beta_k(\Ind(G\sqcup H)) = \sum_{i,j\geq -1; i+j=k-1} \beta_i(\Ind(G))\beta_j(\Ind(H)).\]
\end{lemma}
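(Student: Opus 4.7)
The plan is to reduce the lemma to two classical facts: that the independence complex of a disjoint union of graphs is the simplicial join of the two independence complexes, and the Künneth-type formula for the reduced homology of a join.

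First, I would verify the identification $\Ind(G \sqcup H) = \Ind(G) * \Ind(H)$. A subset $S \subseteq V(G) \sqcup V(H)$ is independent in $G \sqcup H$ if and only if $S \cap V(G) \in \Ind(G)$ and $S \cap V(H) \in \Ind(H)$, since the disjoint union has no edges across the two vertex sets. Hence the simplices of $\Ind(G \sqcup H)$ are exactly the disjoint unions $\sigma \sqcup \tau$ with $\sigma \in \Ind(G)$ and $\tau \in \Ind(H)$, which is by definition the simplicial join $\Ind(G) * \Ind(H)$.

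Next, I would invoke the standard Künneth-type identity: for simplicial complexes $X$ and $Y$ and coefficients in a field,
\[ \tilde{H}_k(X * Y) \;\cong\; \bigoplus_{i+j = k-1} \tilde{H}_i(X) \otimes \tilde{H}_j(Y). \]
This follows because the augmented (reduced) chain complex of $X * Y$ is, up to a shift of one in degree, the tensor product of the augmented chain complexes of $X$ and $Y$; applying the ordinary Künneth theorem and noting that there are no Tor terms over a field yields the displayed isomorphism. Taking $\mathbf{k}$-dimensions on both sides produces exactly the identity in the lemma.

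The main obstacle, to the extent there is one, is purely bookkeeping for the boundary case $k=-1$ and the reduced-homology conventions. Here one checks that $\beta_{-1}$ is nonzero only on the void complex, so that $\beta_{-1}(\Ind(G\sqcup H))=\beta_{-1}(\Ind(G))\cdot\beta_{-1}(\Ind(H))$ is consistent (both sides vanish unless both $G$ and $H$ have no vertices, in which case both sides equal $1$). With that convention in place, the lemma is a textbook consequence of the join identity and Künneth, and I expect no substantive difficulty.
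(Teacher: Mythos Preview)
Your proposal is correct and follows exactly the same approach as the paper: identify $\Ind(G\sqcup H)$ with the simplicial join $\Ind(G)*\Ind(H)$ and then apply the K\"unneth formula for reduced homology of a join over a field. The paper's proof is in fact terser than yours, simply stating these two facts without the additional justification you provide for the join identification or the $k=-1$ boundary case.
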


The following observation is essential for our work in \cref{sec:liesmagic}.
\begin{proposition}\label{prop:compbip}
Let $V(G)=V_1 \sqcup V_2$ be the vertex set of a graph $G$ containing all edges of the form $\{v,w\}$ where $v\in V_1$ and $w\in V_2$. Let $d_i \geq 1$ denote the number of connected components of $G$ restricted to $V_i$. Then, $\BF_1(G) = \BF_0(G_1)\BF_0(G_2) = (d_1-1)(d_2-1).$
\end{proposition}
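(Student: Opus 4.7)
The plan is to deduce this proposition as a one-line corollary of \cref{lem:disjointunion} via the complement-graph duality. I would begin by rewriting the hypothesis: the condition that $G$ contains every edge between $V_1$ and $V_2$ is precisely the statement that $G = G_1 \vee G_2$, where $G_i$ is the restriction of $G$ to $V_i$. Equivalently, $\overline{G} = \overline{G_1}\sqcup \overline{G_2}$, and therefore
\[
X(G) \;=\; \Ind(\overline{G}) \;=\; \Ind(\overline{G_1} \sqcup \overline{G_2}).
\]

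Next, I would apply \cref{lem:disjointunion} with $k = 1$ to the right-hand side, using the identity $\Ind(\overline{G_i}) = X(\overline{\overline{G_i}}) = X(G_i)$, to obtain
\[
\BF_1(G) \;=\; \sum_{\substack{i,j \geq -1 \\ i + j = 0}} \beta_i(X(G_1))\,\beta_j(X(G_2)).
\]
The index set consists of the three pairs $(i,j) \in \{(-1,1),(0,0),(1,-1)\}$. Since the assumption $d_i \geq 1$ guarantees that each $G_i$, and hence each $X(G_i)$, is nonempty, one has $\beta_{-1}(X(G_i)) = 0$, so the two boundary terms vanish. The single surviving contribution is $\beta_0(X(G_1))\beta_0(X(G_2))$; since the connected components of a flag complex agree with those of its underlying graph, $\beta_0(X(G_i)) = d_i - 1$, yielding the stated equality.

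There is no substantial obstacle here: the whole statement is essentially a specialization of \cref{lem:disjointunion} once one recognizes the disjoint-union/join duality under complementation. The only piece of bookkeeping is confirming the vanishing of the reduced $(-1)$-homology on the nonempty complexes $X(G_i)$, which relies on the standard convention that $\tilde{H}_{-1}$ is supported only on the empty complex.
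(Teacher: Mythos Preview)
Your proof is correct and follows essentially the same route as the paper's: pass to the complement to write $\overline{G}=\overline{G_1}\sqcup\overline{G_2}$, apply \cref{lem:disjointunion} with $k=1$, and identify $\Ind(\overline{G_i})=X(G_i)$. The paper simply suppresses the check that the $(i,j)=(-1,1)$ and $(1,-1)$ terms vanish, which you spell out explicitly via $d_i\geq 1$.
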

\begin{proof}
Let $\overline G$ denote the complement graph of $G$ and observe that $\overline G=\overline G_1\sqcup \overline G_2$ where $G_1$ and $G_2$ are the full subgraphs of $G$ with vertices $V_1$ and $V_2$, respectively. From \cref{lem:disjointunion}, 
\[\BF_1(G) = \beta_1(\Ind(\overline G)) = \beta_0(\Ind(\overline G_1))\beta_0(\Ind(\overline G_2)) = \beta_0(X(G_1))\beta_0(X(G_2)). \qedhere\]
\end{proof}

\subsection{Turán graphs}
\begin{definition}\label{def:turan}
    Let $n\geq0$ and $k\geq1$, and let $n'$ be the smallest positive integer such that $n'\equiv n \bmod k$. Then, the \emph{$(n,k)$  graph} $\mathcal{T}_{n,k}$ is the complement graph of the graph 
    \[\overline{\mathcal{T}_{n,k}} = \bigsqcup_{i=1}^k K_{n_i}, \qquad 
n_i=
\begin{cases}
        \lceil n/k\rceil& \text{ if } 1\leq i\leq n',\\
        \lfloor n/k\rfloor & \text{otherwise.}
    \end{cases}\]
\end{definition}
\begin{example}
    See Figure \ref{fig:turangraphs} for some examples of Turán graphs.
    \begin{figure}
        \centering
        \includegraphics[scale=0.75]{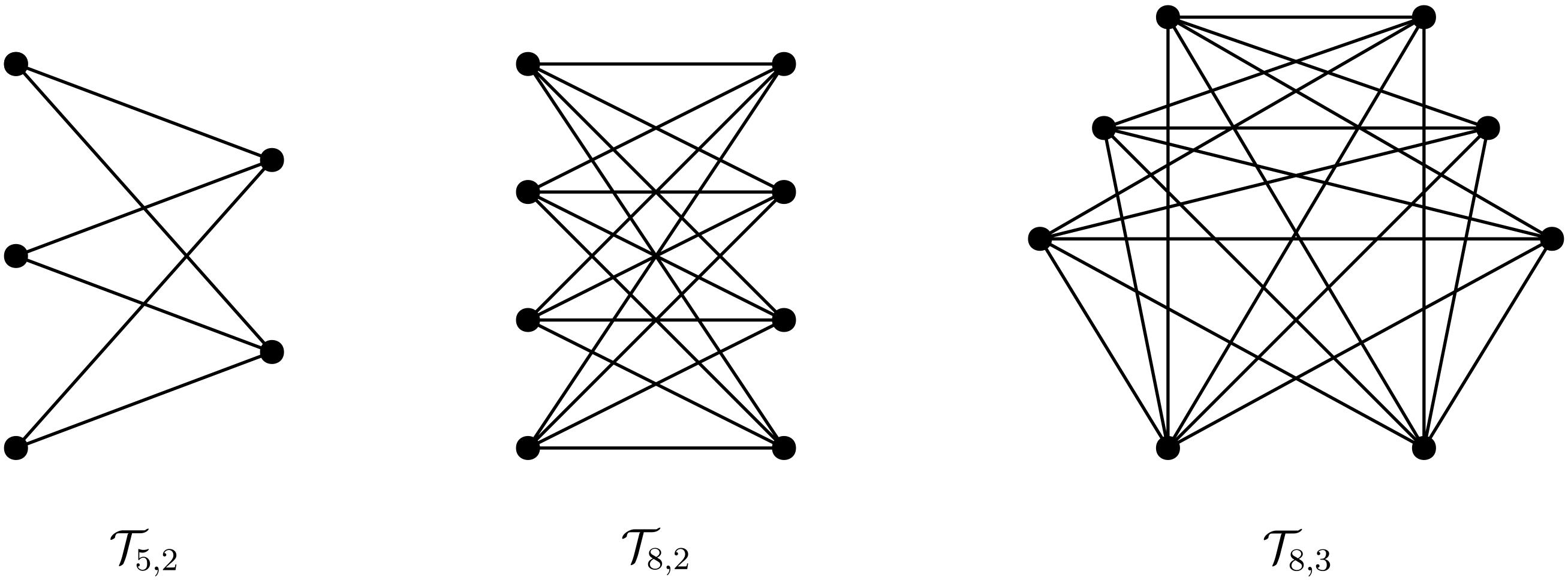}
        \caption{From left to right: the Turán graphs $\mathcal T_{5,2}$, $\mathcal T_{8,2}$ and $\mathcal T_{8,3}$.}
        \label{fig:turangraphs}
    \end{figure}
\end{example}
The proof of the following can be found in \cref{subsec:appeelementary}.
\begin{proposition}
    For all integers $n\geq 1$ and $k\geq 1$, we let $n'$ be the smallest positive integer such that $n'\equiv n \bmod k$. We have
    \[\BF_i(\TG_{n,k}) = \begin{cases}
        (\lceil n/k\rceil-1)^{(n')}\cdot (\lfloor n/k\rfloor-1)^{k - n'} & \text{if $i=k-1$},\\
        0 & \text{otherwise.}
    \end{cases}\]
In particular, if $n$ is a multiple of $k$, then 
$\beta_{k-1} = (n/k-1)^k.$
\label{prop:turanbetti}
\end{proposition}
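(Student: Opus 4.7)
The plan is to translate the claim about $\BF_i(\TG_{n,k})$ into a statement about independence complexes and then apply \cref{lem:disjointunion} iteratively. By definition, $\BF_i(\TG_{n,k}) = \beta_i(X(\TG_{n,k})) = \beta_i(\Ind(\overline{\TG_{n,k}}))$, and \cref{def:turan} identifies $\overline{\TG_{n,k}}$ with the disjoint union $\bigsqcup_{j=1}^k K_{n_j}$, where $n_j = \lceil n/k \rceil$ for $j \leq n'$ and $n_j = \lfloor n/k \rfloor$ otherwise.

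As a base computation I would note that, since $\overline{K_m}$ is the edgeless graph on $m$ vertices, $\Ind(K_m) = X(\overline{K_m})$ is the $0$-dimensional complex consisting of $m$ isolated points. Hence, in reduced homology, $\beta_0(\Ind(K_m)) = m-1$ and $\beta_i(\Ind(K_m)) = 0$ for all $i \neq 0$ (including $i = -1$, provided $m \geq 1$). Next I would prove by induction on $k$, with base case $k=2$ supplied by \cref{lem:disjointunion}, the iterated formula
\[
\beta_m\bigl(\Ind(\bigsqcup_{j=1}^k G_j)\bigr) \;=\; \sum_{\substack{i_1,\ldots,i_k \geq -1 \\ i_1 + \cdots + i_k = m - (k-1)}} \prod_{j=1}^k \beta_{i_j}(\Ind(G_j)).
\]
The inductive step is a direct substitution: write $\bigsqcup_{j=1}^{k+1} G_j = G_{1} \sqcup \bigsqcup_{j=2}^{k+1} G_j$, apply \cref{lem:disjointunion}, and then apply the inductive hypothesis to the second factor, collecting the shifts in degree into a single offset of $k$.

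Finally I would specialize to $G_j = K_{n_j}$. Because $\beta_{i_j}(\Ind(K_{n_j})) = 0$ unless $i_j = 0$, the only surviving term in the iterated sum has $i_1 = \cdots = i_k = 0$, which forces $m = k-1$. In that case the product equals $\prod_{j=1}^k (n_j-1) = (\lceil n/k\rceil-1)^{n'}(\lfloor n/k\rfloor-1)^{k-n'}$, giving the claimed formula; for $m \neq k-1$ the sum is empty, so $\beta_m(\Ind(\overline{\TG_{n,k}})) = 0$. There is no genuine obstacle here—the proof is essentially mechanical once the base case is identified—so the only point requiring care is the bookkeeping of the $(-1)$-st reduced homology and of the cumulative degree shift in the induction.
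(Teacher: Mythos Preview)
Your proof is correct and follows essentially the same approach as the paper: both identify $X(\TG_{n,k})$ with $\Ind(\bigsqcup_j K_{n_j})$, note that $\Ind(K_m)$ has reduced homology only in degree $0$ with $\beta_0=m-1$, and then iterate \cref{lem:disjointunion}. The only organizational difference is that the paper inducts on $k$ directly for the Betti numbers (splitting off one $K_{n_k}$ at a time, with base case $k=1$), whereas you first establish the general iterated K\"unneth-type sum and then specialize; the mathematical content is the same.
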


\section{The maximum value of \texorpdfstring{$\beta_k(X(G))$}{bkX(G)}}
\label{sec:maxbetti}
The following is well-known; see \cref{sec:maxprodproof} for a proof.
\begin{lemma}
\label{lem:maxprod}
    Let $S$ be a positive integer, and $x_i\geq 1$ be integers satisfying $\sum_{i=1}^n x_i=S$. Then, 
 $\prod_{i=1}^n (x_i-1)\leq \prod_{i=1}^n (y_i-1)$ where
    \begin{equation}\label{eq:prod}
    y_i = 
    \begin{cases}
        \lceil S/n\rceil& \text{ if } 1\leq i\leq S\bmod n\\
        \lfloor S/n\rfloor & \text{otherwise.}
    \end{cases}
    \end{equation}
\end{lemma}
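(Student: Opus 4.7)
The plan is to use a standard smoothing (or exchange) argument, showing that any unbalanced tuple can be strictly improved by redistributing a unit from a large entry to a small one.

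First I would take an arbitrary tuple $(x_1,\dots,x_n)$ with $x_i\geq 1$ and $\sum_i x_i=S$, and suppose there exist indices $i\neq j$ with $x_j\geq x_i+2$. Define the modified tuple by replacing $x_i$ with $x_i+1$ and $x_j$ with $x_j-1$; this preserves the sum and the constraints $x_i\geq 1$. Since all other factors of the product are unchanged, it suffices to compare
\[ (x_i-1)(x_j-1) \quad \text{(before)} \qquad \text{versus} \qquad x_i(x_j-2) \quad \text{(after).} \]
A direct expansion gives the difference (after minus before) equal to $x_j-x_i-1\geq 1$, so the product strictly increases. Note that this works uniformly, even in the degenerate case $x_i=1$ where the original product vanishes.

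Next I would iterate this swap. Since each step strictly increases the (integer-valued) product while keeping the sum fixed, and since the set of tuples with sum $S$ and entries $\geq 1$ is finite, the process terminates. At termination, no pair $(i,j)$ satisfies $x_j\geq x_i+2$, i.e.\ $\max_i x_i-\min_i x_i\leq 1$. Hence the multiset $\{x_1,\dots,x_n\}$ consists only of the two consecutive values $\lfloor S/n\rfloor$ and $\lceil S/n\rceil$; counting forces exactly $S\bmod n$ entries to equal the larger value. This is precisely the tuple $(y_1,\dots,y_n)$ from \eqref{eq:prod}, up to permutation. Since $\prod_i(x_i-1)$ is symmetric, the claimed inequality follows.

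I do not expect any serious obstacle here; the only subtlety is making sure the smoothing step remains valid when $x_i=1$ (so that the ``before'' product already equals zero), which the computation above handles automatically because the gain $x_j-x_i-1$ is still strictly positive.
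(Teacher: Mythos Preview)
Your smoothing argument is essentially the paper's proof: both observe that if two entries differ by at least $2$, a unit transfer improves $\prod_i(x_i-1)$, and hence any maximizer has all entries within $1$ of each other.

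One small gap in your write-up: you claim each swap \emph{strictly} increases the full product and use this for termination. But if some \emph{third} entry $x_k$ equals $1$, the full product is $0$ both before and after the swap (your local gain $x_j-x_i-1>0$ gets multiplied by the other factors, one of which is $x_k-1=0$). So the termination argument as stated does not go through, and the subtlety you flag at the end is not quite the right one. The fix is easy: either use $\sum_i x_i^2$ as your potential (it strictly decreases under each swap, since $(x_i+1)^2+(x_j-1)^2-x_i^2-x_j^2=2(x_i-x_j+1)<0$), or adopt the paper's phrasing, which sidesteps iteration entirely: the maximum exists by finiteness of the feasible set, and at a maximizer no improving swap is available, forcing $|x_i-x_j|\le 1$ for all $i,j$.
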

As said in \cref{sec:relatedwork}, the following proof is a modification of the proof of \cite[Theorem 1.1]{adamaszek2014extremal}.
\begin{theorem}
\label{thm:turanopt}
Let $G$ be a graph on $n$ vertices. Then, for all $k\geq 0$,
\[\BF_k(G) \leq (\lceil n/(k+1)\rceil-1)^{(n\bmod k+1)}\cdot (\lfloor n/(k+1)\rfloor-1)^{(k+1) - (n\bmod k+1)}.\]
\end{theorem}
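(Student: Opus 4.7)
The plan is to proceed by induction on $n$. For a base case $n \leq 2k+1$, the Turán partition of $\mathcal{T}_{n,k+1}$ contains a singleton class, so the Turán product on the right-hand side is $0$; independently, every flag complex on at most $2k+1$ vertices has $\beta_k = 0$, because the minimum flag triangulation of the $k$-sphere is the boundary of the $(k+1)$-dimensional cross-polytope, namely $\mathcal{T}_{2(k+1),k+1}$ on $2(k+1)$ vertices. So the base case is trivial.

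For the inductive step with $n \geq 2(k+1)$, fix a vertex $v \in V(G)$. Since $X(G)$ is a flag complex, $\lk_{X(G)}(v) = X(G[N_G(v)])$, and \cref{lem:MV-lk} together with the inductive hypothesis applied to $G - v$ (on $n-1$ vertices) and to $G[N_G(v)]$ (on $\deg v \leq n-1$ vertices) yields
\[
\beta_k(X(G)) \;\leq\; \beta_k(X(G-v)) + \beta_{k-1}(X(G[N_G(v)])) \;\leq\; B(n-1,k+1) + B(\deg v, k),
\]
where I write $B(m,j)$ for the Turán Betti bound (equivalently $\BF_{j-1}(\mathcal{T}_{m,j})$ by \cref{prop:turanbetti}). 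The key observation is the identity
\[
B(n,k+1) - B(n-1,k+1) \;=\; B\bigl(n - \lceil n/(k+1)\rceil,\,k\bigr),
\]
obtained by a direct manipulation of the product formula; it corresponds exactly to the equality case of the MV bound when $v$ is placed in a largest part of $\mathcal{T}_{n,k+1}$. Combined with the fact that $m \mapsto B(m,k)$ is non-decreasing (a consequence of \cref{lem:maxprod}), this closes the induction whenever $G$ admits a vertex of degree at most $n - \lceil n/(k+1)\rceil$.

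The main obstacle is the regime $\delta(G) > n - \lceil n/(k+1)\rceil$, i.e.\ $\Delta(\overline G) \leq \lceil n/(k+1)\rceil - 2$, where no vertex has small enough degree for the above estimate to close the induction. I would first apply a folding reduction: if some vertex $v$ is dominated by a neighbor ($N_G[v] \subseteq N_G[w]$ for some $w \neq v$), then $\lk_{X(G)}(v)$ is a cone, so $X(G) \simeq X(G-v)$ and induction on $n-1$ gives $\beta_k(X(G)) \leq B(n-1,k+1) \leq B(n,k+1)$ using monotonicity of $B(\cdot,k+1)$. For an irreducible $G$ (no dominated vertex) that still falls in this sparse-complement regime, a separate structural argument is required; I expect this to proceed via the join formula of \cref{lem:disjointunion} applied to the connected components of $\overline G$, combined with a direct Künneth estimate showing that an irreducible $\overline G$ with $\Delta(\overline G) \leq \lceil n/(k+1)\rceil - 2$ forces $\beta_k(X(G))$ to be bounded by the Turán product. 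Pinning down the correct structural argument that matches the Turán bound exactly in this irreducible sparse-complement corner is the most delicate part.
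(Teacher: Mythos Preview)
Your argument is incomplete, as you yourself concede: the regime $\delta(G) > n - \lceil n/(k+1)\rceil$ is left unresolved. The folding reduction is fine, but irreducible graphs in this regime do exist (e.g.\ $\overline{G}$ a perfect matching on $3(k+1)$ vertices for $k$ odd), and the proposed K\"unneth argument on the components of $\overline{G}$ is a hope, not a proof; when $\overline{G}$ is connected it gives nothing at all. A minor additional issue: your base case $n\leq 2k+1$ appeals to minimal flag triangulations of $S^k$, but nonzero $\beta_k$ does not force a sphere, so that sentence is not a justification. Simply taking $n=1$ suffices.

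The paper sidesteps the case split entirely by iterating \cref{lem:MV-lk} rather than applying it once. Working in $\overline{G}$, choose $v$ of \emph{minimum} degree $d$ in $\overline{G}$ with neighbours $v_1,\ldots,v_d$, and delete $v_1,\ldots,v_d$ one at a time, applying the lemma at each step. After the $d$-th deletion $v$ is isolated, so $\Ind(\overline{G}_d)$ is a cone and the residual $\beta_k$ term vanishes outright instead of being estimated. Each of the $d$ link terms lives on at most $n-d-1$ vertices (since every vertex of $\overline{G}$ has degree $\geq d$), so the induction hypothesis yields
\[
\beta_k(\Ind(\overline{G}))\;\leq\; d\cdot B(n-d-1,k)\;=\;\bigl((d{+}1)-1\bigr)\prod_{i=1}^{k}(x_i-1),\qquad (d{+}1)+\sum_{i=1}^k x_i=n,
\]
and \cref{lem:maxprod} gives the Tur\'an bound directly, with no hypothesis on $\delta(G)$ required. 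Your single-step inequality $\beta_k(X(G))\leq B(n-1,k+1)+B(\deg v,k)$ is precisely the paper's \cref{cor:maxvertex}, which is derived there as a \emph{consequence} of the theorem rather than as a tool for proving it; that is why it does not close the induction on its own.
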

\begin{proof}
We shall work inductively on $n$. The result is trivially true for $n=1$, so assume that it holds for all $n'<n$.

Note that $X(G) = \Ind(\overline G)$. Let $d$ denote the minimal degree over all $v\in V(\overline{G})$. Assume that $v$ is a vertex with $\deg(v) = d$, and let $\{v_1, \ldots, v_d\}$ denote the neighbours of $v$ in $\overline{G}$. Moreover, let $\overline{G}_i = \overline{G}-\{v_1, \ldots, v_i\}$ for $i=1, \ldots, d$ and $\overline{G}_0 = \overline{G}$. Applying \cref{lem:MV-lk},
\begin{align*}
\beta_k(\Ind(\overline{G})) &\leq \beta_k(\Ind(\overline{G}_1)) + \beta_{k-1}(\Ind(\overline{G}-N_{\overline{G}}[v_1])\\
& \leq \beta_k(\Ind(\overline{G}_2)) + \beta_{k-1}(\Ind(\overline{G}_1-N_{\overline{G}_1}[v_2])) + \beta_{k-1}(\Ind(\overline{G}-N_{\overline{G}}[v_1])))\\
& \vdots \\
& \leq \beta_k(\Ind(\overline{G}_d)) + \sum_{i=0}^{d-1} \beta_{k-1}(\Ind(\overline{G}_i-N_{\overline{G}_i}[v_{i+1}]))
\end{align*}
Here $\overline{G}_d$ has an isolated vertex and thus $\Ind(\overline{G}_d)$ is a cone, and therefore $\beta_k(\Ind(\overline{G}_d)) = 0$. Since every vertex of $\overline{G}$ has degree at least $d$, it follows that $\overline{G}_i - N_{\overline{G}_i}(v_{i+1})$ contains at most $n-(d+1) = n-d-1$ vertices. 
By the induction hypothesis it follows that
\[\beta_k(\Ind(\overline{G})) \leq d \cdot (x_1-1)\cdots (x_{k}-1)=  ((d+1)-1) \cdot (x_1-1)\cdots (x_{k}-1)\]
for integers $x_i\geq 1$ for which $(d+1)+\sum_{i=1}^{k}x_i =d+1+n-d-1=n$. Hence, by \cref{lem:maxprod}, 
\[\beta_k(\Ind(\overline{G})) \leq (\lceil n/(k+1)\rceil-1)^{(n\bmod k+1)}\cdot (\lfloor n/(k+1)\rfloor-1)^{(k+1) - (n\bmod k+1)}.\qedhere\]
\end{proof}
Let $\mathbf{G}(n)=\bigl\{G \colon \text{ graph on $n$ vertices}\bigr\}$.
Combining \cref{thm:turanopt} and \cref{prop:turanbetti}, we have
\begin{corollary}
\label{cor:turtight}
Of all graphs on $n$ vertices, $\mathcal{T}_{n,k+1}$ maximizes the $k$-th Betti number. I.e.,
\[\max_{G\in \mathbf{G}(n)} \BF_k(G) = \BF_k(\mathcal{T}_{n,k+1}).\]
In particular, if $n$ is a multiple of $k$, then 
$\max_{G\in \mathbf{G}(n)} \BF_k(G) = (n/k-1)^k.$
\end{corollary}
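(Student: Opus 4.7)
The plan is a direct combination of the two results immediately preceding the corollary. First I would invoke Theorem~\ref{thm:turanopt} to obtain the uniform upper bound
\[\BF_k(G) \leq (\lceil n/(k+1)\rceil-1)^{(n\bmod (k+1))}\cdot (\lfloor n/(k+1)\rfloor-1)^{(k+1) - (n\bmod (k+1))}\]
for every $G \in \mathbf{G}(n)$. Next I would apply Proposition~\ref{prop:turanbetti} with $k$ there replaced by $k+1$: since the only nontrivial Betti number of $\mathcal{T}_{n,k+1}$ sits in degree $k$, the proposition evaluates $\BF_k(\mathcal{T}_{n,k+1})$ to exactly the right-hand side of the displayed bound. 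Equality is therefore attained by $\mathcal{T}_{n,k+1}$, which proves the identity $\max_{G\in \mathbf{G}(n)} \BF_k(G) = \BF_k(\mathcal{T}_{n,k+1})$.

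For the ``in particular'' statement I would specialize to the divisible case. When $(k+1)\mid n$, both $\lceil n/(k+1)\rceil$ and $\lfloor n/(k+1)\rfloor$ equal $n/(k+1)$, the splitting of the exponent becomes immaterial, and the value collapses to $(n/(k+1)-1)^{k+1}$. (The excerpt phrases the closing formula with $k$ in place of $k+1$, but this is only a bookkeeping issue; the proof records the value of $\BF_k(\mathcal{T}_{n,k+1})$ under whichever divisibility hypothesis is intended.)

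No serious obstacle arises, since the analytic work has already been done in Theorem~\ref{thm:turanopt} and the arithmetic content is fixed by Proposition~\ref{prop:turanbetti}. The only point that deserves a moment of care is that the ordinary residue ``$n\bmod (k+1)$'' used in Theorem~\ref{thm:turanopt} and the ``smallest positive residue'' $n'$ used in Definition~\ref{def:turan} describe the Turán graph via slightly different conventions when $(k+1)\mid n$; nevertheless, in that case both conventions force every partition class to have size $n/(k+1)$, so the two formulas for $\BF_k(\mathcal{T}_{n,k+1})$ agree and the matching of upper bound to achieved value goes through verbatim.
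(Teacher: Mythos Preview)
Your proposal is correct and matches the paper's approach exactly: the corollary is stated as an immediate consequence of combining Theorem~\ref{thm:turanopt} with Proposition~\ref{prop:turanbetti}, and your observations about the residue conventions and the $k$ versus $k+1$ bookkeeping in the ``in particular'' clause are apt but do not affect the argument.
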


\section{The maximum value of \texorpdfstring{$\beta_k(X(G))$}{bkX(G)} for \texorpdfstring{$|E(G)| \leq |E(\mathcal{T}_{n,k+1})|.$}{E(G) < E(Tn,k+1).}}
\label{sec:optimalfiltration}
In this section, we fix $n,k\geq1$, and the notation
\begin{equation}e_{n,k+1}\coloneqq |E(\mathcal T_{n,k+1})| \text{ and }\Delta_{m-1,m}^{k+1}\coloneqq e_m^{k+1}-e_{m-1}^{k+1}.\end{equation}
The following result follows from combining \cref{lem:MV-lk} and \cref{cor:turtight}, and will be integral in proving the main result of this section.
\begin{corollary}
\label{cor:maxvertex}
Let $v$ be a vertex of degree $d\geq 1$ in a graph $G$. Then, 
\[\BF_k(G) \leq \BF_k(G-v)+ \BF_{k-1}(\mathcal{T}_{d,k}).\]
\end{corollary}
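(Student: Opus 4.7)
The plan is to apply \cref{lem:MV-lk} to $K=X(G)$ at the vertex $v$, then recognize both the deletion $X(G)-v$ and the link $\lk_{X(G)}(v)$ as flag complexes on strictly fewer vertices, and finally invoke \cref{cor:turtight} to bound the link term.

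First, I would observe that the deletion behaves well for flag complexes: a simplex $\tau$ avoiding $v$ is a clique in $G$ iff it is a clique in $G-v$, so $X(G)-v = X(G-v)$, which gives $\beta_k(X(G)-v)=\BF_k(G-v)$. Second, and more importantly, the link of $v$ is itself a flag complex on the neighborhood of $v$. Indeed, since $X(G)$ is flag, a simplex $\tau$ with $v\notin\tau$ satisfies $\{v\}\cup\tau\in X(G)$ iff $\{v\}\cup\tau$ is a clique in $G$, which is equivalent to $\tau$ being a clique in the induced subgraph $G[N_G(v)]$. Thus
\[\lk_{X(G)}(v) = X\bigl(G[N_G(v)]\bigr),\]
a flag complex on exactly $d$ vertices.

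Plugging both identifications into \cref{lem:MV-lk} yields
\[\BF_k(G)\;\leq\;\BF_k(G-v)+\BF_{k-1}\bigl(G[N_G(v)]\bigr),\]
and since $G[N_G(v)]\in \mathbf{G}(d)$, \cref{cor:turtight} (applied in homological degree $k-1$) bounds the second summand by $\BF_{k-1}(\mathcal{T}_{d,k})$, giving the claimed inequality.

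There is no real obstacle: the statement is essentially a direct combination of the Mayer--Vietoris-type inequality for a vertex link with the extremal Turán bound from the previous section. The only point requiring a moment's care is the identification of the link of $v$ in $X(G)$ with the flag complex on the induced neighborhood subgraph, which is immediate from the flag property, together with the fact that $|N_G(v)|=d$ so that \cref{cor:turtight} applies on the right ambient number of vertices.
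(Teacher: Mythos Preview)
Your proposal is correct and follows essentially the same route as the paper: apply \cref{lem:MV-lk} to $X(G)$ at $v$, identify $X(G)-v=X(G-v)$ and $\lk_{X(G)}(v)$ as the flag complex on the $d$-vertex neighborhood of $v$, and then bound the link term via \cref{cor:turtight}. Your identification of the link with $X(G[N_G(v)])$ is in fact slightly more precise than the paper's write-up, but the argument is the same.
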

\begin{proof}
If $K=X(G)$, then $K-v = X(G-v)$, and \cref{lem:MV-lk} implies that 
\[\BF_k(G) \leq \BF_k(G-v)+ \beta_{k-1}(\lk_{X(G)}(v)).\]
Now observe that $\lk_{X(G)}(v) = X(N_G[v])$, and since $\lk_{X(G)}(v)$ has $d$ vertices, it follows from \cref{cor:turtight} that $\beta_{k-1}(\lk_K(v)) \leq \BF_{k-1}(\mathcal{T}_{d,k})$.\qedhere
\end{proof}
We now define an edgewise filtration $\mathcal H^{n,k+1}=\mathcal H=\bigl\{H_i\bigr\}_{i=1}^{e_{n,k+1}}=\bigl\{H_i^{n,k+1}\bigr\}_{i=1}^{e_{n,k+1}}$ of the Turán graph $\mathcal T_{n,k+1}$. In this section, we shall show, for $m=1,\ldots,|\mathcal T_{n,k+1}|$, that $\BF_k(H_m)$ is maximal over all graphs with $m$ edges, i.e., that $\mathcal H$ is \emph{fiberwise optimal}.
\begin{figure}[ht!]
        \centering
        \includegraphics[scale=0.75]{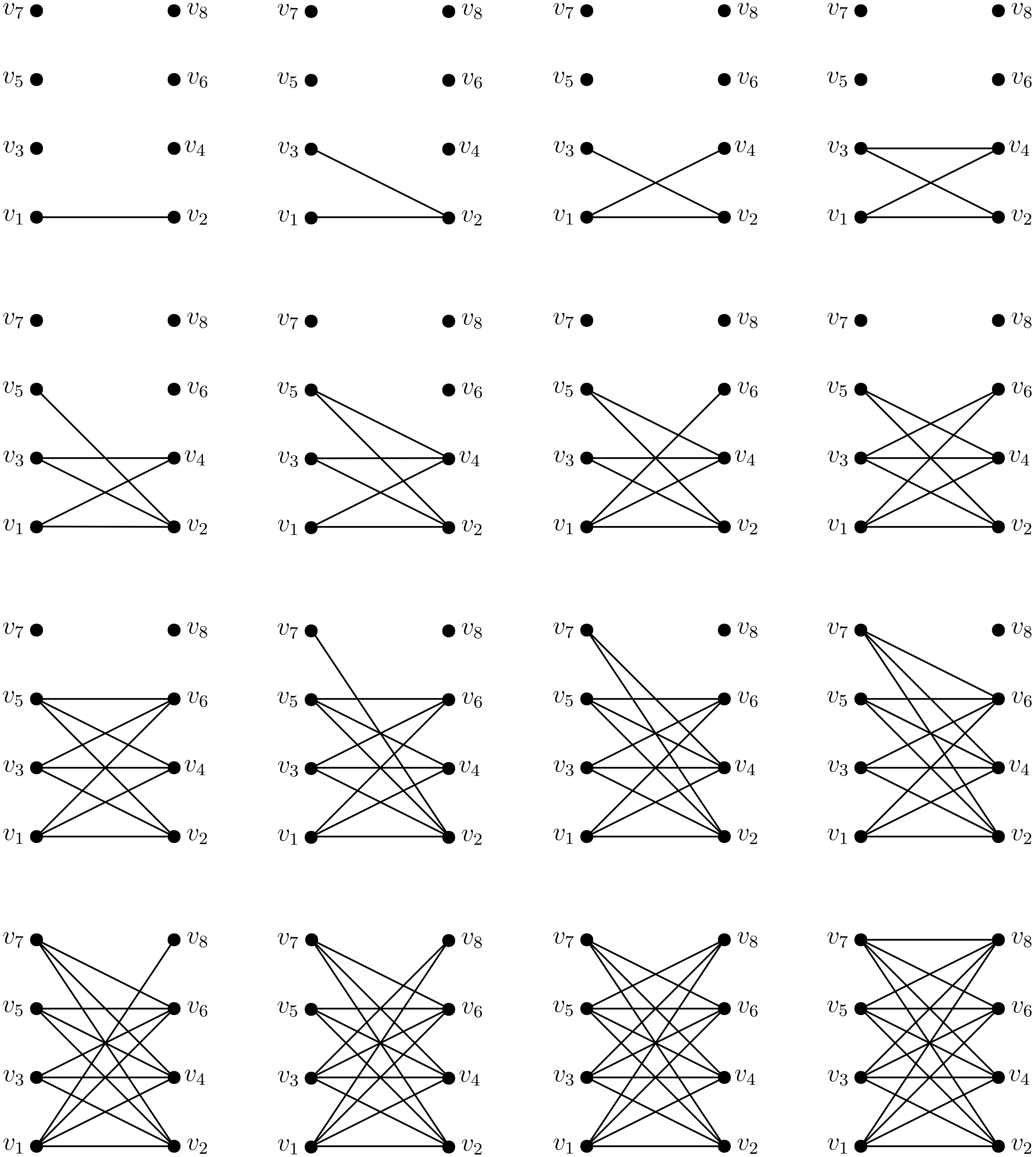}
        \caption{The filtration $\mathcal H$ of $\mathcal T_{8,2}$ from Definition \ref{def:H}.}
        \label{fig:filtT82}
    \end{figure}
\begin{definition}\label{def:H}
Let $V$ denote the vertices of $\mathcal{T}_{n,k+1}$ and
label the elements of $V$ such that $v_i$ and $v_{i+k+1}$ are in the same partition. Writing each  edge as $e_{i,j} = \{v_i, v_j\}$ for $i<j$, we order the edges by the following co-lexicographic order: $e_{i,j} < e_{k,l}$ if $\max\{i,j\}<\max\{k,l\}$, or if $i=k$ and $j<l$, or if $j=l$ and $i<k$.
Following this order, let 
$H_i=H_i^{n,k+1}$ denote the subgraph of $\mathcal{T}_{n,k+1}$ with the first $i$ edges.
\end{definition}
Note that, since $\mathcal T_{n,k+1}$ contains no $k+1$-simplices, $\beta_k(H_i)$ is increasing as a function of $i$.
\begin{example}
    See Figure \ref{fig:filtK5} for an edgewise filtration $\mathcal G = \bigl\{G_i\bigr\}_{i=1}^6$ of $K_5$, where $G_i\cong H_i^{5,2}$ for $i=1,\ldots,6$. Furthermore, in Figure \ref{fig:filtT82} one can find the filtration $\mathcal H^{8,2}$ of $\mathcal T_{8,2}$.
\end{example}

The next lemma shows that once a vertex has been connected to the growing component, and until the next vertex gets connected, the change in $\BF_k$ from adding a single edge, increases with the number of edges already added. The proof of the following Lemma is similar to that of \cref{cor:maxvertex} and can be found in \cref{sec:proofHe}.
\begin{lemma}\label{lem:He}
For $e$ edges, let $m$ be maximal such that $\mathcal{T}_{m,k+1}$ is a subgraph of $H_e$. Then, \[\BF_k\bigl(H_e\bigr) = \BF_k\bigl(\mathcal{T}_{m,k+1}\bigr) + \BF_{k-1}\bigl(\mathcal{T}_{e-e_m^{k+1}, k}\bigr).\]
In particular, for $e_m^{k+1}\leq e_1 < e_2 < e_{m+1}^{k+1}$,
we have that \[\BF_k(H_{e_1+1}) - \BF_k(H_{e_1})\leq \BF_k(H_{e_2+1}) - \BF_k(H_{e_2}).\]
\end{lemma}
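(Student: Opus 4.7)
The plan is to decompose $X(H_e)$ along the newest non-isolated vertex $v_{m+1}$ and apply Mayer--Vietoris, exploiting that Turán flag complexes have homology concentrated in a single degree (\cref{prop:turanbetti}).

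First, unwinding \cref{def:H}: the co-lexicographic order processes every edge with larger endpoint at most $v_m$ before any edge incident to $v_{m+1}$, so $H_{e_m^{k+1}}$ is exactly $\mathcal{T}_{m,k+1}$ on $\{v_1,\ldots,v_m\}$. Consequently, for $e_m^{k+1} \leq e < e_{m+1}^{k+1}$ the graph $H_e$ equals $\mathcal{T}_{m,k+1}$ together with $v_{m+1}$ joined to its first $d = e - e_m^{k+1}$ permissible neighbors, i.e., the first $d$ indices $i\in\{1,\ldots,m\}$ with $i\not\equiv m+1 \pmod{k+1}$. Because the labelling cycles through the $k+1$ partition classes and we skip the class of $v_{m+1}$, these $d$ indices distribute across the remaining $k$ classes as evenly as possible, with part sizes $\lceil d/k\rceil$ and $\lfloor d/k\rfloor$. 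Adjacency in $\mathcal{T}_{n,k+1}$ depends only on partition membership, so the induced subgraph on $N_{H_e}(v_{m+1})$ is isomorphic to $\mathcal{T}_{d,k}$.

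Set $A = X(H_e - v_{m+1})$ and $B = \stt_{X(H_e)}(v_{m+1})$. Then $A\cup B = X(H_e)$, $A\cap B = \lk_{X(H_e)}(v_{m+1}) = X(\mathcal{T}_{d,k})$, and $B$ is a cone, hence acyclic. By \cref{prop:turanbetti}, $\tilde H_j(A) = \tilde H_j(X(\mathcal{T}_{m,k+1})) = 0$ for $j\neq k$ (isolated vertices of $H_e - v_{m+1}$ do not contribute to $\tilde H_j$ for $j \geq 1$), while $\tilde H_j(A\cap B) = 0$ for $j\neq k-1$. The Mayer--Vietoris sequence therefore collapses to
\[
0 \to \tilde H_k(A) \to \tilde H_k(X(H_e)) \to \tilde H_{k-1}(A\cap B) \to 0,
\]
which gives $\BF_k(H_e) = \BF_k(\mathcal{T}_{m,k+1}) + \BF_{k-1}(\mathcal{T}_{d,k})$. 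For the second claim, write $f(d) = \BF_{k-1}(\mathcal{T}_{d,k})$. The first part yields $\BF_k(H_{e_i+1}) - \BF_k(H_{e_i}) = f(d_i+1) - f(d_i)$ where $d_i = e_i - e_m^{k+1}$, so it suffices to show that the forward difference $f(d+1) - f(d)$ is non-decreasing in $d$. Writing $d = qk + r$ with $0 \leq r\leq k-1$, the formula in \cref{prop:turanbetti} gives $f(d+1) - f(d) = q^r (q-1)^{k-1-r}$ whenever $q\geq 1$, which is non-decreasing as $r$ runs over $0,\ldots,k-1$ and matches across the transition $(q,k-1)\mapsto(q+1,0)$ (both equal $q^{k-1}$); for $d\leq k-1$ the graph $\mathcal{T}_{d,k}$ lies inside a simplex, so $f(d) = 0$ and the differences are trivially zero.

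The main obstacle is the combinatorial identification that the co-lex ordering distributes the first $d$ neighbors of $v_{m+1}$ evenly across the $k$ partition classes different from that of $v_{m+1}$; this is exactly what pins down $A\cap B \cong X(\mathcal{T}_{d,k})$. Once this is established, Mayer--Vietoris combined with the concentrated Betti numbers of Turán flag complexes closes both statements.
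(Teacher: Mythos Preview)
Your proof is correct and follows essentially the same route as the paper: both split $X(H_e)$ along the last-attached vertex via Mayer--Vietoris, identify $K-v$ with $X(\mathcal{T}_{m,k+1})$ and $\lk_K(v)$ with $X(\mathcal{T}_{d,k})$, and then use the single-degree concentration of Tur\'an Betti numbers (\cref{prop:turanbetti}) to collapse the long exact sequence. Your write-up is in fact more explicit than the paper's on two points---the co-lex argument showing the $d$ neighbours of $v_{m+1}$ spread evenly over the other $k$ classes, and the closed-form forward difference $f(d+1)-f(d)=q^r(q-1)^{k-1-r}$---where the paper simply asserts the link identification as ``the key observation'' and argues the monotonicity qualitatively via the product formula.
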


\begin{theorem}\label{thm:turanoptfilt}
    Let $e\leq e_{n,k+1}$, let $H_e$ be as above, and let $G$ be any other graph on $n$ vertices and $e$ edges. Then, $\BF_{k}(G)\leq \BF_k(H_e).$
\end{theorem}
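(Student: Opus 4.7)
The natural approach is induction on $n$. The base case $n \leq k+1$ is immediate since $\BF_k$ vanishes for any flag complex on at most $k+1$ vertices. For the inductive step I would select $v \in V(G)$ of minimum degree $d$, which by the handshake lemma satisfies $d \leq \lfloor 2e/n \rfloor$. Applying \cref{cor:maxvertex} yields
\[
\BF_k(G) \leq \BF_k(G - v) + \BF_{k-1}(\mathcal{T}_{d, k}),
\]
and since $G - v$ has $n - 1$ vertices and $e - d$ edges, I bound $\BF_k(G - v) \leq \BF_k(H_{e'}^{n-1, k+1})$ for $e' = \min(e - d, e_{n-1, k+1})$, invoking the inductive hypothesis when $e - d \leq e_{n-1, k+1}$ and \cref{thm:turanopt} (together with $\mathcal{T}_{n-1, k+1} = H_{e_{n-1, k+1}}^{n-1, k+1}$) otherwise.

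The problem then reduces to the purely combinatorial inequality
\[
\BF_k(H_{e'}^{n-1, k+1}) + \BF_{k-1}(\mathcal{T}_{d, k}) \leq \BF_k(H_e^{n, k+1}).
\]
Substituting the explicit decomposition from \cref{lem:He} and the product formula from \cref{prop:turanbetti}, and telescoping the $\BF_k(\mathcal{T}_{\cdot, k+1})$ contributions, this simplifies to the super-additivity statement
\[
\BF_{k-1}(\mathcal{T}_{a, k}) + \BF_{k-1}(\mathcal{T}_{b, k}) \leq \BF_{k-1}(\mathcal{T}_{a+b, k}),
\]
which follows from \cref{lem:maxprod} (comparing the balanced partition of $a + b$ with the sum-of-partitions $(n_i^a + n_i^b)_i$) together with the elementary expansion $\prod_i (x_i + y_i + 1) \geq \prod_i x_i + \prod_i y_i$, valid because every cross term on the left is non-negative and two of them produce the right side.

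I expect the main obstacle to be the case analysis on where $e$ and $e - d$ fall in the filtration. When they lie in the same stage $[e_m^{k+1}, e_{m+1}^{k+1})$, the super-additivity above applies directly with $a = e - d - e_m^{k+1}$ and $b = d$. When they cross a boundary $e_m^{k+1}$ one must telescope $\BF_k(\mathcal{T}_{m, k+1}) - \BF_k(\mathcal{T}_{m', k+1})$ into $\sum_{j = m'+1}^{m} \BF_{k-1}(\mathcal{T}_{\delta_j, k})$ via the recursion implicit in \cref{lem:He}, and it is here that the handshake bound $d \leq \lfloor 2e/n \rfloor$ becomes essential: a short computation from the balanced partition sizes shows that the average degree of $\mathcal{T}_{n, k+1}$ exceeds its minimum degree by strictly less than one, which in turn forces $m - m' \leq 1$ and rules out the sub-regimes in which the telescoped inequality would otherwise fail.
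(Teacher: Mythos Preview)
Your overall architecture---pick a minimum-degree vertex, apply \cref{cor:maxvertex}, and bound the deleted graph inductively---matches the paper's, though you induct on $n$ rather than on $e$. The two inductions are essentially equivalent once you observe that the $d=0$ step simply strips isolated vertices. The paper instead compares the number $n'$ of non-isolated vertices of $G$ to the number $m$ of non-isolated vertices of $H_e$, disposing of $n'<m$ via \cref{thm:turanopt} and then using $n'\ge m$ to bound the minimum positive degree; your handshake bound $d\le\lfloor 2e/n\rfloor$ achieves the same thing after the isolated vertices are removed.

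There is, however, a genuine gap in your reduction. You assert that after telescoping, the required inequality becomes the super-additivity
\[
\BF_{k-1}(\mathcal{T}_{a,k})+\BF_{k-1}(\mathcal{T}_{b,k})\le \BF_{k-1}(\mathcal{T}_{a+b,k}).
\]
This is correct only in the same-stage case $m'=m$. When $m'=m-1$ the telescoped right-hand side has \emph{two} terms, $\BF_{k-1}(\mathcal{T}_{\Delta_{m-1,m},k})+\BF_{k-1}(\mathcal{T}_{e-e_m,k})$, with the same total index $e-e_{m-1}$ as the left side. Super-additivity would give $\BF_{k-1}(\mathcal{T}_{a,k})+\BF_{k-1}(\mathcal{T}_{d,k})\le \BF_{k-1}(\mathcal{T}_{e-e_{m-1},k})$, but then you would need the \emph{reverse} inequality $\BF_{k-1}(\mathcal{T}_{e-e_{m-1},k})\le \BF_{k-1}(\mathcal{T}_{\Delta_{m-1,m},k})+\BF_{k-1}(\mathcal{T}_{e-e_m,k})$, which is false in general. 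What actually closes this case is the increment-monotonicity (convexity) in the second part of \cref{lem:He}: since $d\le\Delta_{m-1,m}$ and $a<\Delta_{m-1,m}$, the pair $(\Delta_{m-1,m},\,e-e_m)$ majorizes $(a,d)$, and convexity of $j\mapsto\BF_{k-1}(\mathcal{T}_{j,k})$ yields the desired inequality. This is precisely how the paper's Case~1/Case~2 analysis proceeds. Your ``avg minus min $<1$'' observation for Tur\'an graphs is the right ingredient for establishing $d\le\Delta_{m-1,m}$ (hence $m-m'\le 1$), but you must invoke convexity rather than super-additivity to finish.
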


\begin{proof}
    We prove this inductively on the number of edges $e$. The statement is clearly true for $e=1$, so let's assume that the result holds for all $e'<e$. 
    
    Let $m$ be the number of vertices in $H_e$ with positive degree, and let $n'$ denote the number of vertices in $G$ with positive degree. If $n'<m$, then by \cref{thm:turanopt},
    \[\BF_k(G) \leq \BF_k(\mathcal{T}_{m-1,k+1}) \leq \BF_k(H_e).\]
    Hence, we may assume that $n'\geq m$. In particular, the average degree of the positive-degree-vertices in $G$ is no larger than the average degree of positive-degree-vertices in $H_e$.

    Choose a vertex $v$ in $G$ with minimal positive degree $d$, and observe that 
    \[d \leq e^{k+1}_{m} - e^{k+1}_{m-1} =\Delta_{m-1,m},\]
    since $\mathcal{T}_{m-1,k+1}\subsetneq H_e \subseteq \mathcal{T}_{m,k+1}$. In fact, if $d=\Delta_{m-1,m}$, then we must have that the average degree in $H_e$ is at least $d$. Importantly, this happens if and only if $H_e = \mathcal{T}_{m,k+1}$ and $m$ is a multiple of $k+1$. 

    By the induction assumption,
    \begin{align*}
        \BF_k(G-v) &\leq \BF_k(H_{e-\deg(v)}), \text{ and thus, by Corollary \ref{cor:maxvertex},}\\
        \BF_k(G) &\leq \BF_k(H_{e-\deg(v)})+ \BF_{k-1}(\mathcal{T}_{d,k}).
    \end{align*}
    If $d=\Delta_{m-1,m}$, then this becomes, by Lemma \ref{lem:He},
    \[\BF_k(G) \leq \BF_k(\mathcal{T}_{m-1,k+1}))+ \BF_{k-1}(\mathcal{T}_{d,k}) = \BF_k(\mathcal{T}_{m,k+1}) = \BF_k(H_e).\]
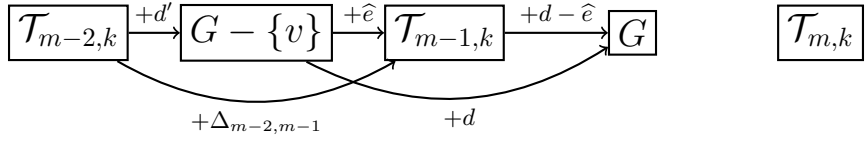
\begin{figure}
\centering
\begin{tikzpicture}[auto,node distance=2.5cm,
  thick,main node/.style={draw,font=\sffamily\Large\bfseries}]
  \node[main node] (1) {$\mathcal{T}_{m-2,k}$};
  \node[main node] (2) [right of=1] {$G-\{v\}$};
  \node[main node] (3) [right of=2] {$\mathcal{T}_{m-1,k}$};
    \node[main node] (4) [right of=3] {$G$};
    \node[main node] (5) [right of=4] {$\mathcal{T}_{m,k}$};

  \path[every node/.style={font=\sffamily\small}]

    (1) edge[->,bend right] node [below] {$+\Delta_{m-2,m-1}$} (3)
    (2) edge[->,bend right] node [below] {$+d$} (4)
    (1) edge[->] node [above] {$+d'$} (2)
    (2) edge[->] node [above] {$+\widehat e$} (3)
    (3) edge[->] node [above] {$+d-\widehat e$} (4);
\end{tikzpicture}
\caption{An illustration of Case $2$ from the proof of Theorem \ref{thm:turanoptfilt}.}
\label{fig:case2}
\end{figure}
We may therefore assume that 
$d\leq \Delta_{m-1,m}-1 \leq \Delta_{m-2,m-1}$. 

Let $\hat{e}$ denote the number of edges that is added to $H_{e-d}$ before a new vertex gets positive degree in the filtration $\mathcal H$. Let $d'$ be the degree of the last vertex in $H_{e-d}$ that obtained positive degree in $\mathcal H$. We consider two cases.
\begin{itemize}
\item {\bf Case 1: $\mathcal{T}_{m-1,k} \subseteq H_{e-d}$.} Then, 
\begin{align*}
\BF_k(G) &\leq \BF_k({H_{e-d}}) + \BF_{k-1}(\mathcal{T}_{d,k}) \\
&= \BF_k({H_{e-d}}) + \BF_{k}(H_{e_m^{k+1}+d})- \BF_{k}(H_{e_m^{k+1}})\\
&\leq \BF_k({H_{e-d}}) + \BF_{k}(H_{e})- \BF_{k}(H_{e-d})\\
&= \BF_k(H_e).
\end{align*}
\item {\bf Case 2: $\mathcal{T}_{m-2,k} \subseteq H_{e-d}\subset \mathcal{T}_{m-1,k}$.}
Note that $d'+\hat{e} = \Delta_{m-2,m-1} \geq d$ and $d-\hat{e} \geq 0$ (see Figure \ref{fig:case2}). In particular, 
\begin{align*}
\BF_k(G) &\leq \BF_k({H_{e-d}}) + \BF_{k-1}(\mathcal{T}_{d,k}) \\
&= \BF_k({H_{e-d}}) + \bigl(\BF_{k-1}(\mathcal{T}_{d,k})-\BF_{k-1}(\mathcal{T}_{d-\hat{e},k})\bigr)+ \BF_{k-1}(\mathcal{T}_{d-\hat{e},k})\\
&\leq  \BF_k({H_{e-d}}) + \bigl(\BF_{k-1}(\mathcal{T}_{d'+\hat{e},k}\bigr)- \BF_{k-1}(\mathcal{T}_{d',k})) + \BF_{k-1}(\mathcal{T}_{d-\hat{e},k})\\
&= \BF_k(H_e).
\end{align*}
\end{itemize}
In both cases, the inequality follows from \cref{cor:maxvertex} and \cref{lem:He}.
\end{proof}

\section{Tight bounds on the vanishing of homology and extremal interval lengths}
\label{sec:barlength}
For a graph $G$ with $n$ vertices, it is guaranteed that there exists a vertex of degree $(n-1)$ when the average degree exceeds $n-2$. Specifically, if the number of edges satisfies $m > \frac{n(n-2)}{2}$, then $X(G)$ must be a cone, implying that $\BF_k(G) = 0$ for all $k$. However, in practical scenarios, the primary interest is with $\BF_k(G)$ for small $k$. In this section, we provide tight bounds for the vanishing of $\BF_k(G)$ for a fixed $k$.

\begin{lemma}\label{lem:removingedges}
    Let $G$ be a graph with minimum degree $u$. Let $v$ be a vertex of degree $u$ and let $N_G(v)=\bigl\{v_1,\ldots,v_u\bigr\}$. Let $d_i\coloneqq \deg (v_i)$.
    If $G$ has $n$ vertices and $m$ edges, then
    \[
    \bigl| V\bigl(G - N_G[v_i]\bigr)\bigr| = n-d_i-1 \text{ and } \bigl| E\bigl(G - N_G[v_i]\bigr)\bigr| \leq m - \biggl( d_i + \biggl\lceil \frac{(u-1)d_i}{2}\biggr\rceil\biggr).
    \]
\end{lemma}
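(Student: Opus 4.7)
The vertex count $|V(G-N_G[v_i])|=n-d_i-1$ is immediate, since $|N_G[v_i]| = d_i+1$. For the edge count, my plan is to enumerate the edges \emph{removed}, namely those with at least one endpoint in $N_G[v_i]$, and obtain a lower bound of $d_i + \lceil (u-1) d_i /2 \rceil$ on their number.

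These removed edges partition into three disjoint types: (i) the $d_i$ edges incident to $v_i$ itself; (ii) edges having both endpoints in $N_G(v_i)$, whose number I will call $e^*$; and (iii) edges with exactly one endpoint in $N_G(v_i)$ and the other in $V \setminus N_G[v_i]$, whose number I will call $e^{\mathrm{ext}}$. Thus, it suffices to show $e^* + e^{\mathrm{ext}} \geq \lceil (u-1) d_i / 2 \rceil$.

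To this end, I apply handshake to the set $N_G(v_i)$. Each $w \in N_G(v_i)$ contributes $1$ to its degree through the edge $\{v_i, w\}$, edges inside $N_G(v_i)$ are counted twice, and edges leaving $N_G(v_i)$ to $V \setminus N_G[v_i]$ are counted once, giving
\[
\sum_{w \in N_G(v_i)} \deg_G(w) = d_i + 2e^* + e^{\mathrm{ext}}.
\]
Since $u$ is the minimum degree of $G$, the left-hand side is at least $u \cdot d_i$, so $2e^* + e^{\mathrm{ext}} \geq (u-1) d_i$. Combined with $e^{\mathrm{ext}} \geq 0$, this yields $e^* + e^{\mathrm{ext}} \geq (u-1) d_i / 2$, and integrality of $e^* + e^{\mathrm{ext}}$ upgrades the bound to the desired ceiling.

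The argument has no real obstacle: the only subtle point is that the ceiling is forced by $e^* + e^{\mathrm{ext}}$ being an integer, not by any combinatorial subtlety about the graph structure. The essential input is that the minimum-degree hypothesis is applied to the neighbours of $v_i$ rather than to $v_i$ itself, which is what produces the factor $(u-1)$ after subtracting the shared contribution of the $d_i$ edges to $v_i$.
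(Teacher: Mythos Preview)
Your proof is correct and follows essentially the same approach as the paper: both count the removed edges by noting that each vertex of $N_G(v_i)$ has at least $u-1$ neighbours other than $v_i$, and that edges inside $N_G(v_i)$ may be counted twice, yielding the $\lceil (u-1)d_i/2\rceil$ bound. Your version is more explicit, carrying out the handshake identity and the integrality step in full, whereas the paper states the same idea in a single sentence.
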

\begin{proof}
    Write $ \widehat G \coloneqq G - N_G[v_i]$.
    The fact that $\bigl| V\bigl(\widehat G\bigr)\bigr| = n-d_i-1$ is trivial. For the inequality, note that, by removing $N_G[v_i]$ from $G$, we remove the $d_i$ edges containing $v_i$, and all edges containing the vertices of $N_G(v_i)$. Those vertices all have degree at least $u$, which means that they have at least $u-1$ neighbors apart from $v_i$. Because it might be the case that $N_G\bigl(N_G[v_i]\bigr) = N_G[v_i]$, it follows that
    \[
    \bigl| E\bigl(\widehat G\bigr)\bigr| \leq m - \biggl( d_i + \biggl\lceil \frac{(u-1)d_i}{2}\biggr\rceil\biggr).\qedhere
    \]
\end{proof}

\begin{theorem}\label{thm:vanishing}
    Let $G$ be a graph with $n$ vertices and $m>{n-1 \choose 2}+ k$ edges. Then $\BF_k(G) = 0$. 
\end{theorem}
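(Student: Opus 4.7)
The plan is to prove the theorem by induction on $k$, with an inner induction on $n$. The cleanest setup I would use is to reformulate in terms of the complement: since $X(G)=\Ind(\overline{G})$ and $|E(\overline{G})|=\binom{n}{2}-m$, the hypothesis $m>\binom{n-1}{2}+k$ is equivalent to $|E(\overline{G})|\leq n-k-2$. So the theorem becomes: \emph{if $H$ is a graph on $n$ vertices with $|E(H)|\leq n-k-2$, then $\beta_k(\Ind(H))=0$}. I would prove this reformulated statement, and then translate back.

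For the base case $k=0$, the hypothesis $|E(H)|\leq n-2$ forces $\overline{H}$ to have more than $\binom{n-1}{2}$ edges, and hence to be connected (a disconnected graph on $n$ vertices has at most $\binom{n-1}{2}$ edges, achieved by $K_{n-1}$ together with an isolated vertex). Since $\overline{H}$ is the $1$-skeleton of $\Ind(H)$, $\Ind(H)$ is connected, so $\beta_0=0$. For the inductive step, assume the statement for $k-1$ and argue by induction on $n$. If $H$ has an isolated vertex $v$, then $\Ind(H)$ is a cone with apex $v$, so assume every vertex has degree at least $1$. Pick any vertex $v$ and apply \cref{lem:MV-lk} to $\Ind(H)$, using the identifications $\Ind(H)-v=\Ind(H-v)$ and $\lk_{\Ind(H)}(v)=\Ind(H-N_H[v])$, to obtain
\[\beta_k(\Ind(H))\leq \beta_k(\Ind(H-v))+\beta_{k-1}(\Ind(H-N_H[v])).\]
The first term vanishes by the inner induction on $n$, since $H-v$ has $n-1$ vertices and at most $|E(H)|-\deg_H(v)\leq (n-1)-k-2$ edges. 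The second term vanishes by the outer induction on $k$, since $H-N_H[v]$ has $n-\deg_H(v)-1$ vertices and at most $|E(H)|-\deg_H(v)\leq (n-\deg_H(v)-1)-(k-1)-2$ edges.

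The whole argument is essentially one application of the Mayer--Vietoris inequality plus bookkeeping, so the main technical point is to check that both edge-count inequalities line up. I expect the computation to work exactly on the nose because removing the closed neighborhood of $v$ decreases the vertex count by $\deg_H(v)+1$ while removing at least $\deg_H(v)$ edges, which is precisely what is required to drop $k$ by one in the hypothesis. This tightness is what makes a single application of \cref{lem:MV-lk} (rather than the iterated scheme used in the proof of \cref{thm:turanopt}) suffice, and suggests the bound $\binom{n-1}{2}+k$ is sharp.
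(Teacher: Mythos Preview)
Your proof is correct, and it is cleaner than the paper's. Both arguments pass to the complement and induct on $k$ with the same base case $k=0$, but the inductive steps differ substantially.

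The paper avoids any induction on $n$: it instead reuses the iterated neighbor-removal scheme from the proof of \cref{thm:turanopt}, so that $\beta_k(\Ind(\overline{G}))$ is bounded by a sum of $\beta_{k-1}$ terms only, and then invokes the technical edge-counting \cref{lem:removingedges} to push each summand under the $(k-1)$-threshold. Your double induction on $k$ and $n$ sidesteps all of that machinery: a \emph{single} application of \cref{lem:MV-lk} splits $\beta_k(\Ind(H))$ into one $\beta_k$ term on $n-1$ vertices (handled by the inner induction) and one $\beta_{k-1}$ term (handled by the outer induction), and the required edge bounds $|E(H-v)|\le (n-1)-k-2$ and $|E(H-N_H[v])|\le (n-\deg_H(v)-1)-(k-1)-2$ follow from $\deg_H(v)\ge 1$ with no further work. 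The inner induction bottoms out vacuously (the hypothesis $|E(H)|\le n-k-2$ is empty for $n\le k+1$) or via the isolated-vertex/cone case. What the paper's route buys is thematic continuity with the proof of \cref{thm:turanopt}; what yours buys is a shorter, self-contained argument that makes the tightness of the bound $\binom{n-1}{2}+k$ transparent.
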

\begin{proof}
Let us first consider the case $k=0$. This case is immediate from the fact that the  maximal number of edges in a graph on $n$ vertices with an isolated vertex is ${n-1 \choose 2}$.

Working inductively on $k$, assume that result holds for all $k'<k$, and that $m>{n-1 \choose 2}+k$. 
Let $\overline{G}$ be the complement graph of $G$, and note that the number of edges $\overline{m} = |E(\overline{G})|$ satisfies 
\[\overline{m} = {n \choose 2} - m <{n \choose 2}-{n-1 \choose 2}  -k = n-1-k.\] From the proof of \cref{thm:turanopt}, we have that 
\begin{align*}
\BF_k(G) = \beta_k(\Ind(\overline{G})) \leq \sum_{i=0}^{d-1} \beta_{k-1}(\Ind(\overline{G}_i-N_{\overline{G}_i}[v_{i+1}])),
\end{align*}
where $\overline{G}_i = \overline{G}-\{v_1, \ldots, v_i\}$, and $\{v_1, \ldots, v_u\}$ are the neighbors of a vertex $v\in V(\overline{G})$ with minimal degree $u$. We shall show that all the terms in the sum are zero.

If we let $u'\geq u-i$ denote the minimal degree of a vertex in $V(\overline{G}_i-N_{\overline{G}_i}[v_{i+1}])$, 
\begin{align*}&\bigl| E\bigl(\overline{G}_i-N_{\overline{G}_i}[v_{i+1}])\bigr| \\
&\overset{(i)}{\leq} |E(\overline{G}_i)| - \biggl( \deg_{\overline{G}_i}(v_{i+1}) + \biggl\lceil \frac{(u'-1)\deg_{\overline{G}_i}(v_{i+1})}{2}\biggr\rceil\biggr)\\
& \overset{(ii)}{\leq} |E(\overline{G}_i)| - \biggl( u-i + \biggl\lceil \frac{(u-i-1)(u-i)}{2}\biggr\rceil\biggr) = |E(\overline{G}_i)|-\sum_{j=1}^{u-i}j\\
& \overset{(iii)}{\leq} |E(\overline{G})|- \sum_{j=u-i+1}^uj - \sum_{j=1}^{u-i}j =|E(\overline{G})|-\sum_{j=1}^uj = \overline{m}-\left(\frac{u(u+1)}{2}\right).
\end{align*}
Here, $(i)$ follows from \cref{lem:removingedges}, $(ii)$ from $\deg_{\overline{G}_i}(v_i)\geq u'$, and $(iii)$ from the fact that $\overline{G}_i$ is obtained by removing $i$ vertices from $\overline{G}$ with degree at least $u$ in $\overline{G}$. 
Now write 
\[n'=|V(\Ind(\overline{G}_i-N_{\overline{G}_i}[v_{i+1}]))| \leq n-u-1,\] and observe that  \[
|E(\Ind(\overline{G}_i-N_{\overline{G}_i}[v_{i+1}]))| \geq  {n' \choose 2} - \overline{m}+ u(u+1)/2.\] It follows that,
\begin{align*}
&|E(\Ind(\overline{G}_i-N_{\overline{G}_i}[v_{i+1}]))| - {n'-1 \choose 2} \\
&\geq {n' \choose 2} - \overline{m}+ u(u+1)/2 - {n'-1 \choose 2} \\
&=n'-1-\overline{m}+u(u+1)/2> n'-1-(n-1-k)+u(u+1)/2\geq k + u(u+1)/2 \geq k.
\end{align*}
Hence, $\beta_{k-1}(\Ind(\overline{G}_i-N_{\overline{G}_i}[v_{i+1}]))=0$ by the induction hypothesis. 
\end{proof}
\begin{remark}
Observe that the difference between the bound for a cone and the bound given in \cref{thm:vanishing} is $n/2 - k - 1$. While this difference is linear in the number of vertices, it can result in a significant reduction in the number of higher-dimensional simplices; a reduction which has the potential to speed up current implementations of persistent homology for flag complexes, e.g., Ripser \cite{bauer2021ripser}.
\end{remark}

In the following example, we show that that bounds in the previous theorem are tight.
\begin{example}
\label{ex:maxlengthbar}
    Let $n=(k+1)p$, and let $K_{1,p-1}$ be a star graph on $p$ vertices and $p-1$ edges, i.e., a central vertex connected to all other vertices. Observe that $\beta_0(\Ind(K_{1,p-1}))=1$ as one vertex is completely disconnected in the complement graph. If, $H = \bigsqcup_{i=1}^{k+1} K_{1,p-1}$, then it follows from repeated application of \cref{lem:disjointunion} that $\beta_k(\Ind(H)) \geq \prod_{i=1}^{k+1} \beta_0(\Ind(K_{1,p-1}))=1$. The number of edges in $\Ind(H)$ is
    \[{n \choose 2} - (k+1)(p-1) ={n \choose 2} - n + (k+1) = {n \choose 2} - (n-1)+k = {n-1 \choose 2} +k. \]
\end{example}

\begin{corollary}
\label{cor:barlengths}
    Let $\mathcal{G}$ be an edgewise filtration on $n$ vertices. Then for any $[a,b)\in \BC_k(\mathcal{G})$, we have \[2k(k+1) \leq a < b \leq {n-1 \choose 2} +k.\]
\end{corollary}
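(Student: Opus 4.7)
The two inequalities in the corollary are dual extremal statements about the number of edges in a flag complex having nonzero $\BF_k$. Since the filtration is edgewise, we may identify the filtration index $i$ with $|E(G_i)|=i$, so that an interval $[a,b)\in \BC_k(\mathcal G)$ encodes the data $\BF_k(G_a)\geq 1$ and $\BF_k(G_{b-1})\geq 1$. Thus the corollary reduces to showing: (i) any graph $G$ with $\BF_k(G)\neq 0$ has $|E(G)|\geq 2k(k+1)$, and (ii) any graph $G$ on $n$ vertices with $\BF_k(G)\neq 0$ has $|E(G)|\leq \binom{n-1}{2}+k$.

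Statement (ii) is exactly \cref{thm:vanishing}, which gives the upper bound on $b$ at once. For (i), my plan is to combine \cref{thm:turanoptfilt} with \cref{lem:He}. For $n\geq 2(k+1)$ we have $m<2k(k+1)=e_{2(k+1),k+1}\leq e_{n,k+1}$, so \cref{thm:turanoptfilt} gives $\BF_k(G)\leq \BF_k(H_m^{n,k+1})$; then by \cref{lem:He},
\[\BF_k(H_m^{n,k+1})=\BF_k(\TG_{m',k+1})+\BF_{k-1}(\TG_{m-e_{m'}^{k+1},k}),\]
where $m'$ is maximal with $\TG_{m',k+1}\subseteq H_m$. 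The first term vanishes by \cref{prop:turanbetti}, since $m<e_{2(k+1),k+1}$ forces $m'<2(k+1)$, and hence $\lfloor m'/(k+1)\rfloor\leq 1$. The second term vanishes provided $m-e_{m'}^{k+1}<2k$; this will come from a bound on $\Delta_{m',m'+1}^{k+1}$. The degenerate regime $n<2(k+1)$, where \cref{thm:turanoptfilt} need not apply, is handled separately: \cref{cor:turtight} and \cref{prop:turanbetti} give $\BF_k(G)\leq \BF_k(\TG_{n,k+1})=0$.

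The main obstacle is the combinatorial bound $\Delta_{m',m'+1}^{k+1}\leq 2k$ for every $m'+1\leq 2(k+1)$. Since $m<e_{m'+1}^{k+1}$, this yields $m-e_{m'}^{k+1}\leq \Delta_{m',m'+1}^{k+1}-1\leq 2k-1<2k$, as required. Under the labeling of \cref{def:H}, the $(m'+1)$-th vertex joins a partition of size at most $2$ (because $m'+1\leq 2(k+1)$), so its degree in $\TG_{m'+1,k+1}$, which equals $\Delta_{m',m'+1}^{k+1}$, is $(m'+1)-(\text{partition size})\in\{m',m'-1\}$; both values are $\leq 2k$, with equality $\Delta=2k$ attained exactly at $m'+1=2(k+1)$. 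This is a short case check that distinguishes $m'+1\leq k+1$ (all partitions singletons) from $k+1<m'+1\leq 2(k+1)$ (a newly enlarged partition of size $2$), and is the only step that requires unwinding the filtration construction.
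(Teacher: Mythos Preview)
Your proof is correct and follows the paper's approach, which in a single line cites \cref{thm:turanoptfilt} for the lower bound on $a$ and \cref{thm:vanishing} for the upper bound on $b$. Your explicit verification that $\BF_k(H_m)=0$ for $m<2k(k+1)$ via \cref{lem:He} and the case split on $\Delta_{m',m'+1}^{k+1}$ fills in details the paper omits; alternatively, the monotonicity of $i\mapsto\BF_k(H_i)$ (noted just after \cref{def:H}) reduces this to the single computation $\BF_k(H_{2k(k+1)-1})=\BF_k(\mathcal{T}_{2k+1,k+1})+\BF_{k-1}(\mathcal{T}_{2k-1,k})=0$.
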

\begin{proof}
    The bound on $a$ follows from \cref{thm:turanoptfilt}, and the bound on $b$ from \cref{thm:vanishing}.
\end{proof}
It is straightforward to define a filtration $\mathcal{F}$ of the complement graph of $H$ from \cref{ex:maxlengthbar} such that $\BC_k(\mathcal{\mathcal{F}})$ contains the interval $[a,b)$ from \cref{cor:barlengths}.

\section{The maximum value of \texorpdfstring{$|\BF_1(\mathcal{G})|$}{b1FL(G)}.}
\label{sec:maxintervals}
\begin{proposition}
\label{prop:maxint}
    Let $\mathcal{G} = \{G_i\}_{i=1}^m$ be an edgewise filtration. Then, there exists a triangle-free subgraph $H$ of $G_m$ such that $\BF_1(H) \geq |\BC_1(\mathcal{G})|.$
\end{proposition}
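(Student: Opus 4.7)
The plan is to identify each interval of $\BC_1(\mathcal G)$ with a distinguished edge of $G_m$ and then show that these ``birth edges'', together with the ``tree edges'' of the filtration, form a triangle-free subgraph $H$ whose first Betti number equals $|\BC_1(\mathcal G)|$.

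\textbf{Step 1 (classify edges by their effect on $\tilde H_1$).} For each $i$, write $e_i=\{u,v\}$ and split edges into three classes based on $G_{i-1}$: \emph{tree edges}, where $u,v$ lie in different components; \emph{birth edges}, where $u,v$ lie in the same component but share no common neighbor; and the remaining \emph{triangle-absorbing} edges. For a tree edge, $u$ and $v$ have no common neighbor (otherwise they would share a component), so no simplex of dimension $\geq 2$ containing $e_i$ is added and $\partial_1 e_i=v-u$ is not a boundary, hence $\ker\partial_1$ is unchanged and $\tilde H_1$ is unchanged. For a birth edge, again no new $2$-simplex appears, $\ker\partial_1$ gains one dimension spanned by $z=e_i+p$ for any $u$-$v$ path $p$ in $G_{i-1}$, and the inclusion $\tilde H_1(X(G_{i-1}))\to\tilde H_1(X(G_i))$ is injective with $1$-dimensional cokernel --- exactly one birth. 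For a triangle-absorbing edge, pick any common neighbor $w$; then $\partial\{u,v,w\}=e_i+\rho$ with $\rho\in C_1(X(G_{i-1}))$, so $z-\partial\{u,v,w\}\in C_1(X(G_{i-1}))$ is a $1$-cycle in $X(G_{i-1})$ whose class in $\tilde H_1(X(G_i))$ equals $[z]$; every new cycle class thus lifts to an old one, the inclusion is surjective, and no birth occurs. Summing, $|\BC_1(\mathcal G)|$ equals the number of birth edges, call it $|B|$.

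\textbf{Step 2 (build $H$ and verify triangle-freeness).} Let $T$ denote the set of tree edges and set $H\coloneqq(V(G_m),T\cup B)$. Suppose for contradiction $H$ contains a triangle $\{u,v,w\}$, and let $e_i$ be the last of its three edges in the filtration, say $e_i=\{u,w\}$. Then $\{u,v\},\{v,w\}\in E(G_{i-1})$, so $u$ and $w$ lie in the same component of $G_{i-1}$ (so $e_i\notin T$) and $v$ is a common neighbor of $u$ and $w$ in $G_{i-1}$ (so $e_i\notin B$) --- contradiction. Hence $H$ is triangle-free.

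\textbf{Step 3 (count $\beta_1(H)$).} The tree edges form a spanning forest of $G_m$, so $|T|=|V(G_m)|-c(G_m)$, and each birth edge joins two vertices already in the same component of $G_m$, so $c(H)=c(G_m)$. Since $H$ is triangle-free, $X(H)=H$ as simplicial complexes, and
\[
\BF_1(H)=\beta_1(H)=|E(H)|-|V(H)|+c(H)=|B|=|\BC_1(\mathcal G)|,
\]
yielding the desired (in fact sharp) inequality. The technical heart of the proof is the birth analysis in Step 1; Steps 2 and 3 are clean combinatorial bookkeeping.
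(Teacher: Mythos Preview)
Your proof is correct and takes a genuinely different route from the paper's.

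The paper argues homologically: it picks a representative cycle for every bar, lets $H$ be the union of their supporting edges, and then iteratively kills triangles in $H$ by replacing each cycle $c$ using an edge of a triangle $\tau$ with the homologous cycle $c-\partial_2\tau$; after all triangles are gone, the (still linearly independent) cycles witness $\BF_1(\hat H)\ge |\BC_1(\mathcal G)|$.

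You instead argue combinatorially on the filtration: the trichotomy tree/birth/triangle-absorbing pinpoints exactly where degree-$1$ bars are born, and then $H=T\cup B$ is triangle-free by construction and satisfies $\BF_1(H)=|B|=|\BC_1(\mathcal G)|$ via an Euler-characteristic count. Two remarks: in the triangle-absorbing case you should subtract the appropriate scalar multiple of $\partial\{u,v,w\}$ so as to cancel the coefficient of $e_i$ in $z$ (harmless over a field, but worth saying); and your count implicitly starts from $G_0$, the edgeless graph on $V(G_m)$, which is the natural convention here.

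What each approach buys: the paper's argument is agnostic to the filtration order and works directly with persistence representatives, which is conceptually natural from a barcode point of view. Your argument is more elementary and constructive---it produces an explicit $H$ without choosing cycle representatives or iterating a triangle-removal process---and it yields equality $\BF_1(H)=|\BC_1(\mathcal G)|$ rather than merely an inequality. Both approaches break down in higher degrees for the same reason the paper notes in its remark: removing edges (or classifying them one at a time) interacts badly with the many higher simplices a single flag edge can create.
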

\begin{proof}
Choose a representative cycle $c_{[a,b]}$ for each non-empty interval $[a,b)\in \BC_1(\mathcal{G})$, e.g., by running the standard algorithm for persistent homology. For a cycle $c$, let $e(c)$ denote the set of edges on which the cycle has a non-zero coefficient, and let $H$ be the subgraph of $G_m$ with edges given by 
\[\bigcup_{[a,b)\in \BC_1(\mathcal{G})} e(c_{[a,b)]}).\]
Note that the cycles $\{c_{[a,b)} : [a,b)\in \BC_1(\mathcal{G})\}$ are linearly independent (as elements of $C_1(G)$) by virtue of being representatives for non-trivial intervals. 

If $X(H)$ contains a 2-simplex $\tau = \{v_1, v_2, v_3\}$, then any cycle $c_{[a,b)}$ supported on the edge $\{v_2, v_3\}$ is homologous in $\tilde{H}_1(X(H))$ to the cycle $c'_{[a,b)} = c_{[a,b)} - \partial_2(\tau)$. Hence, we can remove $\{v_2, v_3\}$ from $H$, without reducing the 1st Betti number. Doing this for all triangles, we end up with a triangle-free graph $\hat{H}$ containing at least $|\BC_1(\mathcal{G})|$ linearly independent $1$-cycles. In particular, $\BF_1(\hat{H})\geq |\BC_1(\mathcal{G})|$. 
\end{proof}
\begin{remark}
    This argument does not apply to homology in higher degrees. For instance, the removal of an edge can result in the removal of many $2$-simplices, some of which are faces of $3$-simplices, and others that are not. 
\end{remark}
The previous result actually gives a short proof of \cref{cor:turtight} for the case $k=1$. We shall include this proof here, as it ensures uniqueness of the extremal complex.
\begin{proposition}
    For any graph on $n$ vertices, we have $\BF_1(G)\leq  \BF_1(\mathcal{T}_{n,2})$ with equality if and only if $G=\mathcal{T}_{n,2}$.
\end{proposition}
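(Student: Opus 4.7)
My plan is to reduce the bound to triangle-free graphs via Proposition~\ref{prop:maxint}, apply Mantel's theorem componentwise, and then settle the uniqueness statement using Proposition~\ref{prop:compbip}.

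First, by Proposition~\ref{prop:maxint} applied to any edgewise filtration terminating at $G$, there is a triangle-free subgraph $H$ of $G$ (on vertex set $V(G)$) with $\BF_1(H) \geq \BF_1(G)$. Since $H$ is triangle-free, $X(H)=H$ as $1$-complexes, so $\BF_1(H) = |E(H)| - n + c(H)$ is just the graph-theoretic first Betti number. Decomposing $H$ into its connected components $H_1,\ldots,H_c$ on $n_1,\ldots,n_c$ vertices and applying Mantel's theorem (including the uniqueness clause) to each component, one gets $\BF_1(H) = \sum_i \BF_1(H_i) \leq \sum_i g(n_i)$, where $g(m) \coloneqq \lfloor m^2/4 \rfloor - m + 1$, with equality in the $i$th summand forcing either $n_i = 1$ or $H_i = \mathcal{T}_{n_i,2}$. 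A short parity-by-parity calculation shows $g$ is strictly super-additive on integers $\geq 2$ (from $g(a+b)-g(a)-g(b) = \lfloor (a+b)^2/4\rfloor - \lfloor a^2/4\rfloor - \lfloor b^2/4\rfloor - 1$ together with $(a+b)^2/4 - a^2/4 - b^2/4 = ab/2 \geq 2$), and strictly increasing for $m \geq 4$; hence $\sum_i g(n_i) \leq g(n) = \BF_1(\mathcal{T}_{n,2})$, with equality forcing $H$ to consist of a single non-trivial component spanning all $n$ vertices. Chaining gives $\BF_1(G) \leq \BF_1(H) \leq \BF_1(\mathcal{T}_{n,2})$.

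For the uniqueness (trivial for $n \leq 3$, where both sides vanish), suppose equality holds. Then the chain is sharp and the analysis above forces $H = \mathcal{T}_{n,2}$, so in particular $\mathcal{T}_{n,2} \subseteq G$. If $G \supsetneq \mathcal{T}_{n,2}$, then $G$ contains an extra edge within one of the partition classes $V_1, V_2$ of $\mathcal{T}_{n,2}$, and since $G$ still contains all edges between $V_1$ and $V_2$, Proposition~\ref{prop:compbip} applies and yields $\BF_1(G) = (d_1-1)(d_2-1)$, where $d_i$ is the number of components of $G[V_i]$. The extra edge drops some $d_i \leq |V_i|-1$, so $\BF_1(G) \leq (|V_1|-2)(|V_2|-1) < (|V_1|-1)(|V_2|-1) = \BF_1(\mathcal{T}_{n,2})$, a contradiction. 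The main technical step is the super-additivity of $g$, which is routine via parity cases; everything else falls into place once the cited propositions are in hand.
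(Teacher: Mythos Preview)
Your proof is correct and follows essentially the same route as the paper's: reduce to triangle-free graphs via Proposition~\ref{prop:maxint}, then appeal to Mantel/Tur\'an. Your componentwise argument with the super-additivity of $g(m)=\lfloor m^2/4\rfloor-m+1$ is a more explicit version of what the paper does in one line (``we are seeking a triangle-free graph with a maximal number of edges\ldots''), but the content is the same.

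One point where you actually do more than the paper: the uniqueness clause. The paper's argument only shows that the auxiliary triangle-free graph $H$ must equal $\mathcal{T}_{n,2}$; it never rules out the possibility $G\supsetneq \mathcal{T}_{n,2}$. Your final paragraph closes this gap cleanly via Proposition~\ref{prop:compbip}: once $\mathcal{T}_{n,2}\subseteq G$, any extra edge lies inside a partition class, drops some $d_i$, and forces $\BF_1(G)=(d_1-1)(d_2-1)<(|V_1|-1)(|V_2|-1)$. This step is genuinely needed for the ``only if'' direction and is a nice addition.
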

\begin{proof}
    Define any edgewise filtration $\mathcal{G} = \{G_i\}_{i=1}^m$ for which $G_m = G$. Then, by \cref{prop:maxint}, there exists a triangle-free graph $H$ such that $\BF_1(H)\geq \BC_1(\mathcal{G}) \geq \BF_1(G)$. Hence, it suffices to maximize $\BF_1(G)$ over triangle-free graphs. By the Euler-Poincaré formula,
    \[\BF_1(G) = -\BF_0(G) - 1 - |V(G)| + |E(G)| .\]
    In conclusion, we are seeking a triangle-free graph with a maximal number of edges, but this is well-known to be uniquely the  graph $\mathcal{T}_{n,2}$ by Turán's theorem \cite[Theorem 11.17]{chartrand2013first}.
\end{proof}

Combined we arrive at the following.
\begin{corollary}\label{cor:maxnmbrbars}
    If $\mathcal{G}$ is a filtration of $K_n$, then $|\BC_1(\mathcal{G})| \leq \BF_1(\mathcal{T}_{n,2})$ with equality if and only if $G_{\lceil n/2\rceil \cdot \lfloor n/2 \rfloor}=\mathcal{T}_{n,2}$.
\end{corollary}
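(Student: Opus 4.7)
The plan is to chain \cref{prop:maxint} with the uniqueness proposition just established, and then unpack the two directions of the equality condition.

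First I would prove the upper bound: apply \cref{prop:maxint} to $\mathcal{G}$ to obtain a triangle-free subgraph $H \subseteq G_m = K_n$ with $\BF_1(H) \geq |\BC_1(\mathcal{G})|$, and invoke the preceding uniqueness proposition to get $\BF_1(H) \leq \BF_1(\mathcal{T}_{n,2})$; chaining these yields $|\BC_1(\mathcal{G})| \leq \BF_1(\mathcal{T}_{n,2})$.

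Writing $e := \lceil n/2 \rceil \lfloor n/2 \rfloor$, the ``if'' direction is short: assuming $G_e \cong \mathcal{T}_{n,2}$, one has $\beta_1(X(G_e)) = \BF_1(\mathcal{T}_{n,2})$, so at least $\BF_1(\mathcal{T}_{n,2})$ distinct intervals of the barcode are alive at step $e$, giving $|\BC_1(\mathcal{G})| \geq \BF_1(\mathcal{T}_{n,2})$; combined with the upper bound this forces equality.

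For the ``only if'' direction, I would assume $|\BC_1(\mathcal{G})| = \BF_1(\mathcal{T}_{n,2})$ and trace equality through the chain to deduce $\BF_1(H) = \BF_1(\mathcal{T}_{n,2})$, hence $H \cong \mathcal{T}_{n,2}$ with $|E(H)| = e$. Each edge of $H$ appears in the support of some cycle representative $c_{[a,b)}$, and $c_{[a,b)} \in Z_1(G_a)$, so every edge of $H$ is present by step $a^* := \max_j a_j$, i.e.\ $H \subseteq G_{a^*}$. This gives $a^* \geq e$, and the argument is completed by establishing $a^* \leq e$: for then $G_e = G_{a^*} \supseteq H$ together with $|E(H)| = e$ forces $G_e = H \cong \mathcal{T}_{n,2}$.

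The principal obstacle I anticipate is precisely the bound $a^* \leq e$: ruling out births of new homology classes after step $e$. The intuition is that by Tur\'an's theorem any $G_i$ with $i > e$ already contains triangles, so any cycle one tries to birth at such a step ought to be expressible modulo the existing $2$-boundaries in a way that avoids the newly added edge, contradicting its status as a birth edge. Formalizing this will likely require a refined version of the cycle-rewriting used in the proof of \cref{prop:maxint}, applied iteratively across birth times rather than once to the full collection of representatives, so that each $c_{[a,b)}$ with $a > e$ can be shown to be homologous to a cycle lying entirely in $G_{e}$.
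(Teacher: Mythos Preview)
Your treatment of the upper bound and the ``if'' direction is correct and matches the paper's implicit argument (the paper writes only ``Combined we arrive at the following'' after \cref{prop:maxint} and the uniqueness proposition).

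For the ``only if'' direction you are right to sense an obstruction, but the difficulty is more serious than a missing lemma: the statement is actually false as written, so the gap you identify cannot be closed. Take $n=5$, so $e=\lceil 5/2\rceil\lfloor 5/2\rfloor=6$ and $\BF_1(\mathcal{T}_{5,2})=2$, and consider the edgewise filtration of $K_5$ adding edges in the order
\[
14,\ 15,\ 24,\ 25,\ 12,\ 34,\ 35,\ 13,\ 23,\ 45.
\]
A direct check gives $\beta_1(X(G_i))=1$ precisely for $i\in\{4,7\}$ and $\beta_1=0$ otherwise, so the degree-$1$ barcode is $\{[4,5),\,[7,8)\}$ and $|\BC_1(\mathcal{G})|=2=\BF_1(\mathcal{T}_{5,2})$. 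Yet $G_6=\{14,15,24,25,12,34\}$ contains the triangle $\{1,2,4\}$ and is therefore not isomorphic to $\mathcal{T}_{5,2}$. Equality holds without $G_e$ being the Tur\'an graph.

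This shows concretely why your proposed strategy of establishing $a^*\le e$ must fail: in the example there is a genuine birth at step $7>e$, and the cycle born there is \emph{not} homologous in $X(G_7)$ to any cycle supported on $G_6$, because the new edge $35$ lies in no triangle of $G_7$. The rewriting-via-$2$-boundaries idea cannot push this representative back into $G_e$. What the chain of inequalities through \cref{prop:maxint} really yields under equality is only that \emph{some} triangle-free subgraph of $K_n$ assembled from the cycle representatives is isomorphic to $\mathcal{T}_{n,2}$; it need not be $G_e$ itself.
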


\section{Extremal filtrations for degree 1 persistent homology}
\label{sec:liesmagic}
In this section, we consider edgewise filtrations $\mathcal G = G_1\subseteq \cdots \subseteq G_{\binom{n}{2}}$ of the complete graph $K_n$ that maximize the total persistence and the number of bars for degree $1$ homology.
We shall answer the following question: \\
\indent \emph{For a fixed number of vertices $n$ and for $k=1$, which edgewise filtrations of the complete \indent graph $K_{n}$ with a maximal number of bars achieve maximal total persistence?}\\
By Corollary \ref{cor:maxnmbrbars}, this translates to finding
\[
\max\biggl\{\sum_{i=1}^{n(n-1)/2}\BF_1(G_i) \colon \mathcal{G} = \bigl\{ G_i \colon 1\leq i\leq \binom{n}{2}, G_{|E(\mathcal T_{n,2})|}=\mathcal T_{n,2} \text{ and } G_{n(n-1)/2}=K_n\bigr\}\biggr\}.
\]
First, we give some preliminaries and establish some notation.
\begin{itemize}
    \item All graphs $G=(V,E)$ have $n$ vertices. The join of two graphs $G_1$ and $G_2$, notation $G_1\vee G_2$, is as in Section \ref{sec:background}.
    \item For a filtration $\mathcal G = \bigl\{G_i\bigr\}_{i=1}^m$, we write $\BT_1\bigl(\mathcal G\bigr) = \sum_{i=1}^{m}\BF_1(G_i)$ for its total persistence of degree $1$.
    \item Denote $e_n\coloneqq |E(\mathcal T_{n,2})|$. If $|E(G)|\geq e_n$, then $G$ has $\mathcal T_{n,2}$ as a subgraph, with partition classes $V_1$ and $V_2$ of sizes $|V_1| = \lceil n/2 \rceil$ and $|V_2| = \lfloor n/2 \rfloor$.
    \item Let $d_i$ be the number of connected components of $G|_{V_i}$. Then, by Proposition \ref{prop:compbip},
    \begin{equation}\label{eq:productcomponents}\BF_1(G)=(d_1-1)(d_2-1).\end{equation}
\end{itemize}
We also define what it means for two graphs and two filtrations to be (fiberwise) isomorphic.
\begin{definition}
    Two graphs $G_1$ and $G_2$ with vertex sets $V(G_i)$ and edge sets $E(G_i)$ are \emph{isomorphic}, notation $G_1\cong G_2$, if there is a bijection $\varphi\colon V(G_1)\to V(G_2)$, such that $\{u_1,u_2\}\in E(G_1)$ if and only if $\{\varphi(u_1),\varphi(u_2)\}\in E(G_2).$
    Furthermore, two filtrations $\mathcal G^1 = \bigl\{G_i^1\bigr\}_{i=1}^m$ and $\mathcal G^2 = \bigl\{G_i^2\bigr\}_{i=1}^m$ are \emph{fiberwise isomorphic} if $G_i^1\cong G_i^2$ for all $i = 1,\ldots,m$.
\end{definition}
First, by Corollary \ref{cor:maxnmbrbars}, any filtration with a maximal number of bars must contain the Turán graph $\mathcal T_{n,2}$.
Moreover, Theorem \ref{thm:turanoptfilt} establishes that the filtration $\mathcal H = \bigl\{H_i\bigr\}_{i=1}^{e_n}$ from Definition \ref{def:H} is fiberwise optimal, reducing our problem to maximizing the total persistence of an edgewise filtration of the complete graph, beginning with the Turán graph plus one edge.
This is a bit more involved, because we cannot find a fiberwise optimal filtration of the complete graph, as is shown in the following example.
\begin{example}\label{ex:28}
    If $|E|=e_n+2$, then one gets optimal degree $1$ homology by adding one edge to $V_1$ and one edge to $V_2$ (left graph in Figure \ref{fig:ex28}). However, if $|E|=e_n+3$, then optimal degree $1$ homology is obtained by adding all three edges to $V_1$ to form a $K_3$ (second-to-left graph in Figure \ref{fig:ex28}). It is clear that a filtration of $K_n$ cannot contain both graphs.
\end{example}
\begin{figure}
        \centering
        \includegraphics[scale=0.75]{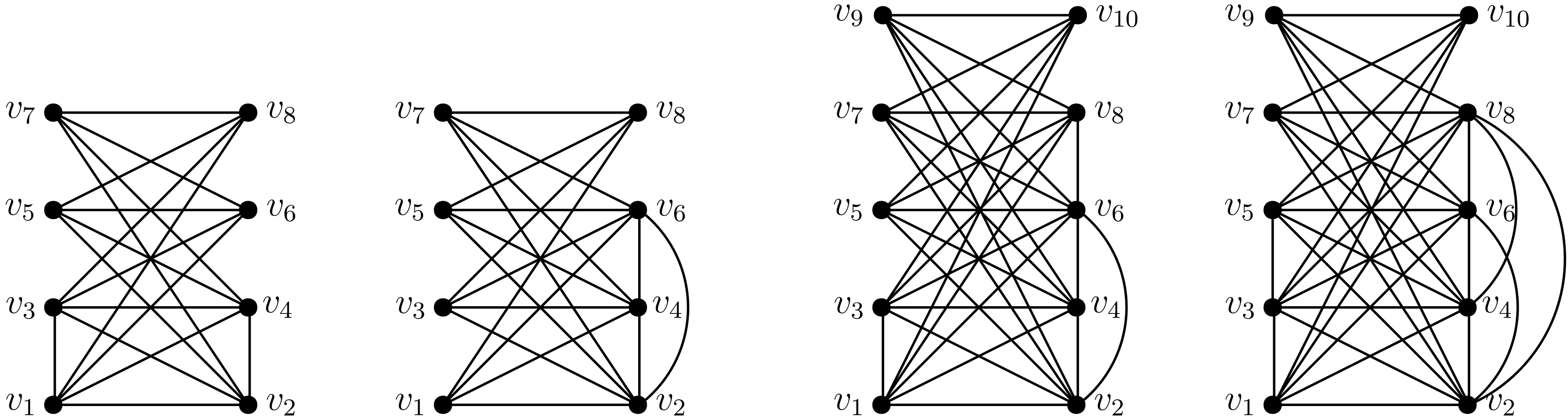}
        \caption{The graphs from Example \ref{ex:28} and \ref{ex:filt}.}
        \label{fig:ex28}.
    \end{figure}
We can prove the following about the structure of the graphs in an optimal filtration of $K_n$.
\begin{lemma}\label{lem:completegraphs}
    Let $\mathcal G$ be an edgewise filtration $\bigl\{G_i\bigr\}_{i=1}^{n\choose 2}$ of $K_n$ with maximal total persistence that includes $\mathcal T_{n,2}$. Then, for $t=1,2$ and $i=e_n+1,\ldots,\binom{n-1}{2}+1$, one of the subgraphs $G_i|_{V_t}$ consists of isolated vertices and a single component $K_m$, and the other subgraph $G_i|_{V_t}$ consists of isolated vertices, and a single component with $l$ vertices, such that $K_{l-1}$ is a subgraph of this component.
\end{lemma}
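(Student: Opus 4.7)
The plan is to induct on $i$, reducing to a statement about component counts via \cref{prop:compbip} and using an exchange argument to rule out deviations from the claimed structure. By \cref{prop:compbip}, whenever $G_i \supseteq \mathcal T_{n,2}$ we have $\BF_1(G_i) = (d_1(i) - 1)(d_2(i) - 1)$, where $d_t(i)$ is the number of connected components of $G_i|_{V_t}$; maximizing total persistence therefore amounts to keeping $d_1$ and $d_2$ large for as long as possible. An edge added inside $V_t$ either preserves $d_t$ (a \emph{wasted} edge, when both endpoints lie in one component of $G_i|_{V_t}$) or drops $d_t$ by one. Wasted edges are the only mechanism for delaying decreases of $d_t$, and they are only available while some component of $G_i|_{V_t}$ is not yet a clique.

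The base case $i = e_n + 1$ is immediate: the single extra edge lies in, say, $V_1$, giving $G_i|_{V_1} = I_1 \sqcup K_2$ (clean with $m = 2$) and $G_i|_{V_2}$ consisting of isolated vertices (interpreted with $l = 1$ and the trivial subgraph $K_0$). For the inductive step, assume $G_i$ has the described structure, with WLOG $G_i|_{V_1} = I_1 \sqcup K_m$ (\emph{clean}) and $G_i|_{V_2} = I_2 \sqcup C$ (\emph{growing}), $C$ connected on $l$ vertices and containing $K_{l-1}$. The edge $e$ added at stage $i + 1$ is \emph{structure preserving} when it is a fill-in of $C$ (both endpoints in $C$, possibly completing $C = K_l$) or, if $C = K_l$ already, an extension of one of the two cliques by an isolated vertex; in either case the description persists at $G_{i+1}$. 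Otherwise $e$ is \emph{structure breaking}, one of: ($\alpha$) extending $C$ by a new vertex while $C \neq K_l$; ($\beta$) joining two isolated vertices of some $V_t$; ($\gamma$) starting to grow $V_1$ while $C \neq K_l$.

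To rule out a structure-breaking edge we swap $e$ with a later edge $e'$: when $C \neq K_l$ we take $e'$ to be the first later fill-in of $C$, and when both sides are clean (which can only arise in case ($\beta$)) we take $e'$ to be the first later iso-to-$K_m$ or iso-to-$K_l$ edge. Such an $e'$ exists because the filtration terminates at $K_n$ and every missing edge inside each $V_t$ must appear. The swap yields a valid edgewise filtration $\mathcal G'$ with the same final graph. A stage-by-stage analysis shows $(d_1(G'_r), d_2(G'_r)) \geq (d_1(G_r), d_2(G_r))$ coordinatewise for $i + 1 \leq r < j$ (where $j$ is the original stage of $e'$): the endpoints of $e'$ remain in a single component of $G'_r|_{V_{t'}}$ throughout, so $e'$ is wasted in every intermediate graph, while removing $e$ from $G_r|_{V_t}$ either preserves $d_t$ or increases it by one. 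Because $e'$ was strictly better than $e$ at some stage in this range, and $d_1(i), d_2(i) \geq 2$ throughout the lemma's range, summing the stage-by-stage $\BF_1$ comparisons yields a strict gain, contradicting optimality of $\mathcal G$. The principal obstacle is the detailed book-keeping for each pair (structure-breaking case, swap type), since the intermediate edges can reshape component structures in intricate ways; verifying the coordinatewise inequality and identifying a stage of strict improvement for each case is the technical heart of the argument.
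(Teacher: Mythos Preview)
Your approach and the paper's are the same in spirit: both are exchange arguments showing that any deviation from the clique-plus-isolated-vertices structure can be improved. The paper's proof is an informal sketch; yours is a more systematic induction with explicit swaps. Cases~($\alpha$) and~($\gamma$), and case~($\beta$) when $C\neq K_l$, work exactly as you describe: swapping $e$ with the first later fill-in of $C$ strictly increases $(d_1-1)(d_2-1)$ already at stage $i+1$.

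The gap is in case~($\beta$) when both sides are clean. Here your proposed swap---exchanging the iso--iso edge $e=\{w_1,w_2\}$ with the first later edge $e'$ from outside $K_m$ into $K_m$---need \emph{not} yield a strict gain at any stage. Concretely, take $V_1=K_3\sqcup\{w_1,w_2,w_3\}$ and suppose the filtration adds $\{w_1,w_2\}$, then $\{w_1,a\}$ with $a\in K_3$, then $\{w_3,b\}$ with $b\in K_3$. One checks $d_1=3,2,1$ along these three stages, and swapping $\{w_1,w_2\}$ with either $\{w_1,a\}$ or $\{w_3,b\}$ leaves the sequence $3,2,1$ unchanged. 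So the coordinatewise inequality is an equality throughout, and no contradiction to optimality follows. Yet this filtration is \emph{not} optimal: adding $\{w_1,a\},\{w_1,b\},\{w_1,c\}$ instead gives $d_1=3,3,3$.

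The paper's (informal) argument avoids this by a larger rearrangement: replace $\{w_1,w_2\}$ by an edge $\{w_1,c\}$ into the existing clique, and then \emph{immediately} insert the $|C|-1$ free fill-in edges $\{w_1,c'\}$, pushing all other edges later. This multi-edge rearrangement, not a single transposition, is what produces the strict improvement in the both-clean case. Your inductive framework is fine, but in case~($\beta$) with both sides clean you need this stronger exchange (or an iterated swap with a termination/potential argument) rather than a single transposition.
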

\begin{example}\label{ex:filt}
    Cf.\ Figure \ref{fig:ex28} for four examples of graphs with the properties from Lemma \ref{lem:completegraphs}.
\end{example}
\begin{proof}  
Suppose that for some $i$, $E(G_i) \setminus E(G_{i-1}) = \{w_1, w_2\}$, where $ w_1 $ and $ w_2 $ are isolated vertices in $ G_{j-1}|_{V_1} $ (without loss of generality). Also assume that $ G_{j-1}|_{V_1} $ contains a component $ C $ of size $ |C| > 1 $.
To maximize total persistence, it is more optimal to connect $w_1$ to $C$, since this allows us to add $|C|-1$  edges of the form $ \{w_1, c\} $ (where $ c \in C $) without increasing the number of connected components (cf.\ \eqref{eq:productcomponents}). Because these additional edges do not decrease the degree $1$ homology, we should add them as early in the filtration as possible to maximize total persistence. Hence, by this same argument, $ G_{j-1}|_{V_2}$ should be isomorphic to $K_m$ and some isolated vertices.

Furthermore, note that we added the restriction $j\leq \binom{n-1}{2}+1$ because Theorem \ref{thm:vanishing} ensures that beyond this point, $\BF_1(G_j)=0$. This means that 
\begin{equation}G_{\binom{n-1}{2}+1}=(K_1\sqcup K_{\lceil n/2 \rceil-1})\vee (K_1\sqcup K_{\lfloor n/2\rfloor -1}),\label{eq:grensgraaf}\end{equation}
but if $j>\binom{n-1}{2}+1$, then $G_j$ could be any graph containing $G_{\binom{n-1}{2}+1}$ as a subgraph.
\end{proof}
Because of this, we restrict our focus to edgewise filtrations of the graph $G_{\binom{n-1}{2}+1}$ from \eqref{eq:grensgraaf}.

Moreover, by Lemma \ref{lem:completegraphs}, we only have to consider filtrations of the described form. We can represent them by a sequence of tuples of the form
\begin{equation}\label{eq:repcompgrphs}
    \bigl(K_{1},K_{1}) = \bigl(K_{l_1},K_{r_1}),\bigl(K_{l_2},K_{r_2}),\ldots,\bigl(K_{l_c},K_{r_c}) = \bigl(K_{\lceil n/2\rceil-1},K_{\lfloor n/2 \rfloor-1}\bigr),
\end{equation}
such that $l_i\geq l_{i-1}$ and $r_i\geq r_{i-1}$.
The representation from \eqref{eq:repcompgrphs} corresponds to the filtration $\mathcal G = \bigl\{G_i\bigr\}_{i=1}^m$, where, for $i\geq1$,
\begin{align*}
G_{e_n+|E(K_{l_i})|+|E(K_{r_i})|} &\cong \biggl( K_{l_i}\sqcup \bigsqcup_{j=l_i+1}^{\lceil n/2 \rceil}K_1 \biggr) \bigvee \biggl(K_{r_i}\sqcup \bigsqcup_{j=r_i+1}^{\lfloor n/2 \rfloor}K_1\biggr) \text{ and }\\
G_{e_n+|E(K_{l_{i+1}})|+|E(K_{r_i})|} &\cong \biggl( K_{l_{i+1}}\sqcup \bigsqcup_{j=l_i+1}^{\lceil n/2 \rceil}K_1 \biggr) \bigvee \biggl(K_{r_i}\sqcup \bigsqcup_{j=r_i+1}^{\lfloor n/2 \rfloor}K_1\biggr).
\end{align*}
In other words, the representation from \eqref{eq:repcompgrphs} means that, starting from the Turán graph $\mathcal T_{n,2}$:
\begin{enumerate}
    \item we begin adding edges by forming a $K_{l_2}$ in $V_1$,
    \item then add edges to form a $K_{r_2}$ in $V_2$,
    \item increase the sizes of the complete subgraphs in $V_1$ and $V_2$ by constructing a $K_{l_3}$ (containing the smaller $K_{l_2}$) in $V_1$, a $K_{r_3}$ in $V_2$, and so on.
\end{enumerate}
Note that this representation is not unique ($(K_3,K_1)$ is equivalent to $(K_2,K_1),(K_3,K_1)$).
\begin{figure}
        \centering
        \includegraphics[scale=0.75]{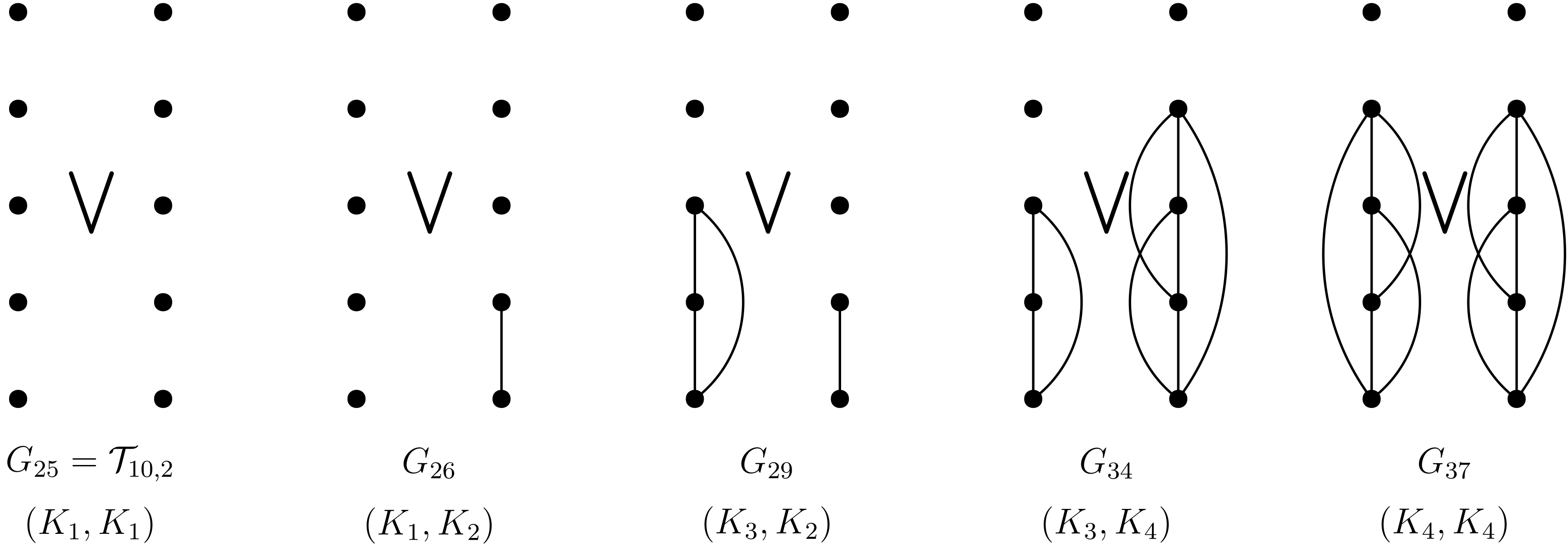}
        \caption{A filtration and its representation. Here, $\bigvee$ denotes the join of the left and right graphs.}
        \label{fig:exrep}
    \end{figure}
\begin{example}
    In Figure \ref{fig:exrep}, one finds a filtration and a corresponding representation.
\end{example}
\begin{figure}
        \centering
        \includegraphics[scale=0.75]{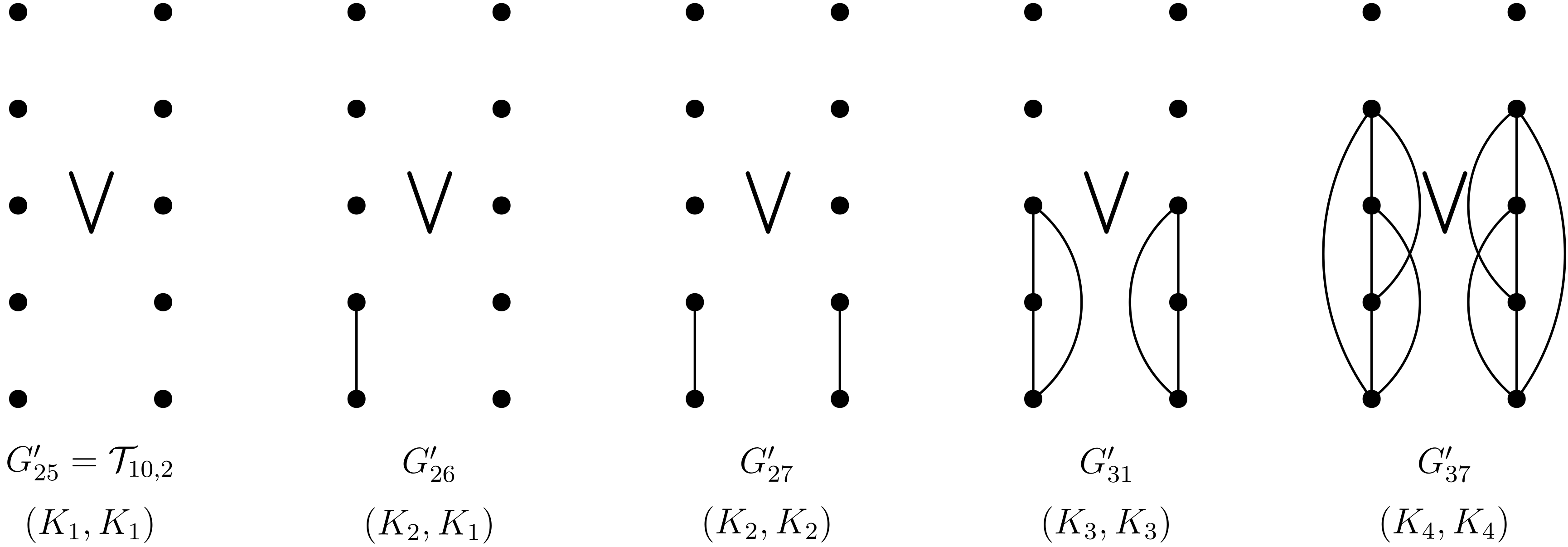}
        \caption{A filtration with the same total persistence as the one from Figure \ref{fig:exrep}.}
        \label{fig:lb}
    \end{figure}
Surprisingly, the optimal strategy is to first construct two large cliques in $V_1$ and $V_2$ of sizes approximately $\frac34|V_1|$ and $\frac34|V_2|$, respectively, and then, alternating between $V_1$ and $V_2$, increase the sizes of the cliques one by one. We prove this in the main result of this section.

\begin{theorem}
\label{thm:lies}
    Let $n\geq4$ and let $j_n\coloneqq \lfloor (3n-2)/8\rfloor$ and $k_n\coloneqq \lfloor (3n+7)/8\rfloor$. Up to fiberwise isomorphism, the edgewise filtration $\mathcal G_{n,\max}=\bigl\{G_i\bigr\}_{i=1}^{\binom{n-1}{2}+1}$ of $G_{\binom{n-1}{2}+1}$ with maximal total persistence and a maximum number of bars is given by $G_i\cong H_i$ (Definition \ref{def:H}) for $i=1,\ldots,e_n$. After $G_{e_n} = \mathcal T_{n,2}$, for $n\not\equiv 0 \mod 8$, the filtration is unique up to fiberwise isomorphism and represented by a sequence of tuples (cf.\ \eqref{eq:repcompgrphs}) as follows
    \[
    \begin{cases}
        \bigl(K_1,K_1\bigr),\bigl(K_{j_n},K_{j_n}\bigr), \bigl(K_{j_n+1},K_{j_n+1}\bigr),\ldots,\bigl(K_{n/2-1},K_{n/2-1}\bigr), &\text{ if }n\equiv 2,4,6\mod 8,\\
        \bigl(K_1,K_1\bigr),\bigl(K_{k_n},K_{k_n-1}\bigr),\bigl(K_{k_n+1},K_{k_n}\bigr),\ldots,\bigl(K_{(n-1)/2},K_{(n-3)/2}\bigr), &\text{ if }n\equiv 1 \mod 2.
    \end{cases}
    \]
    If $n\equiv 0\mod 8$, then there are, up to fiberwise isomorphism, two optimal filtrations, represented by
    \begin{align*}
        &\bigl(K_1,K_1\bigr),\bigl(K_{j_n},K_{j_n}\bigr), \bigl(K_{j_n+1},K_{j_n+1}\bigr),\ldots,\bigl(K_{n/2-1},K_{n/2-1}\bigr) \text{ and }\\
        &\bigl(K_1,K_1\bigr),\bigl(K_{j_n+1},K_{j_n+1}\bigr), \bigl(K_{j_n+2},K_{j_n+2}\bigr), \ldots,\bigl(K_{n/2-1},K_{n/2-1}\bigr).
    \end{align*}
\end{theorem}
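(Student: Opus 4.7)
The first step is to reduce the optimization to a path problem. By \cref{cor:maxnmbrbars}, any filtration of $K_n$ with the maximum number of bars contains $\mathcal{T}_{n,2}$, and \cref{thm:turanoptfilt} forces the prefix through $\mathcal{T}_{n,2}$ to be fiberwise isomorphic to $\mathcal{H}^{n,2}$. Beyond $\mathcal{T}_{n,2}$, \cref{lem:completegraphs} and the parameterization \eqref{eq:repcompgrphs} identify the remaining filtration with a monotone lattice path from $(1,1)$ to $(\lceil n/2\rceil - 1, \lfloor n/2\rfloor - 1)$ in the $(l,r)$-grid of clique sizes, where a right step at $(l,r)$ corresponds to extending $V_1$'s clique from $K_l$ to $K_{l+1}$ (via $l$ edges) and an up step extends $V_2$ symmetrically. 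Writing $\phi_1(l) \coloneqq l(\lceil n/2\rceil - l - 1)$ and $\phi_2(r) \coloneqq r(\lfloor n/2\rfloor - r - 1)$, equation \eqref{eq:productcomponents} shows that a right step at $(l,r)$ contributes $\phi_1(l)(\lfloor n/2\rfloor - r)$ and an up step contributes $\phi_2(r)(\lceil n/2\rceil - l)$ to $\BT_1$. The residual task is to maximize the sum of step weights along the path.

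\textbf{Local exchange fixes the path shape.} At any corner traversing a unit square from $(l,r)$ to $(l+1,r+1)$, a direct computation shows that the right-then-up route beats up-then-right by exactly $\phi_1(l) - \phi_2(r)$. Consequently, any optimal path has every right-up corner satisfying $\phi_1(l) \geq \phi_2(r)$ and every up-right corner $\phi_2(r) \geq \phi_1(l)$. Since $\phi_1, \phi_2$ are concave parabolas with maxima roughly at the centres of their respective axes, iterating this swap via a suitable monovariant collapses any optimal path to the following canonical shape: a pure right segment from $(1,1)$ to $(c, 1)$, a pure up segment to $(c, c)$ when $n$ is even or $(c, c-1)$ when $n$ is odd, and strictly alternating right-up unit steps to $(\lceil n/2\rceil - 1, \lfloor n/2\rfloor - 1)$. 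Only the ``jump size'' $c$ remains as a free parameter.

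\textbf{Optimizing the jump.} Let $F(c)$ denote the total weight of the L-shape-then-alternate path with jump size $c$. In the balanced case $n = 2m$, using the elementary identity $\sum_{t=1}^{c-1} \phi_1(t) = c(c-1)(3m - 2c - 2)/6$, a telescoping calculation produces
\[ F(c+1) - F(c) \;=\; \tfrac{1}{6}\, c(c-1)(3m - 4c - 4). \]
This vanishes exactly at $c = (3m-4)/4$, which is an integer iff $m \equiv 0 \pmod 4$, equivalently $n \equiv 0 \pmod 8$. For $n \not\equiv 0 \pmod 8$ the unique integer maximizer is $c = j_n = \lfloor (3n-2)/8 \rfloor$; for $n \equiv 0 \pmod 8$, both $c = j_n$ and $c = j_n + 1$ maximize $F$, matching the two claimed filtrations. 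The odd-$n$ analysis proceeds in parallel, producing $k_n$. Uniqueness up to fiberwise isomorphism then follows from the strict sign of $F(c+1) - F(c)$ away from the maximizer(s), combined with the rigidity of the canonical path shape.

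\textbf{Main obstacle.} The most delicate step is the iterated local swap of the second paragraph. While each individual swap is a one-line computation, rigorously proving that the L-shape-then-alternate form is the unique globally optimal structure (and not merely locally stable) requires choosing a monovariant such as a lexicographic distance from the L-shape and carefully exploiting the concavity and zero structure of $\phi_1, \phi_2$ to show strict monotonicity of the swaps. The odd-$n$ case adds further bookkeeping, because the grid is non-square and the path uses $(K_c, K_{c-1})$-type imbalanced states after the jump.
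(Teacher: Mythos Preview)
Your overall architecture matches the paper's: use \cref{cor:maxnmbrbars} and \cref{thm:turanoptfilt} to fix the prefix, invoke \cref{lem:completegraphs} to reduce the post-Tur\'an part to the tuple representation \eqref{eq:repcompgrphs}, transform to a one-parameter family, and optimise over that parameter. Your formula $F(c+1)-F(c)=\tfrac{1}{6}c(c-1)(3m-4c-4)$ is correct and is exactly the paper's equation \eqref{eq:apptoprove2} specialised to $j=k=c$; the resulting casework on $n\bmod 8$ is also right.

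Where you diverge is in the mechanism for reaching the one-parameter family. The paper does \emph{not} use unit-square swaps. It introduces two coarser moves, each comparing filtrations that differ across many edges at once: (i) insert an extra diagonal checkpoint $(d-1,d-1)$ just before the alternating tail, with difference given by \eqref{eq:apptoprove1}; and (ii) push the first jump $j\mapsto j{+}1$, with difference \eqref{eq:apptoprove2}. Both differences are shown to be nonnegative by direct algebra, and strictly positive except in the explicitly tracked boundary cases, which is what yields uniqueness.

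Your unit-square exchange argument has a genuine gap at precisely the point you flag. The corner inequalities $\phi_1(l)\geq\phi_2(r)$ (at RU corners) and $\phi_2(r)\geq\phi_1(l)$ (at UR corners) are only necessary conditions for local optimality. In the even case $\phi_1=\phi_2=\phi$ and $\phi(l)-\phi(r)=(l-r)(m-1-l-r)$, so swaps are \emph{indifferent} whenever $l=r$ or $l+r=m-1$; any monovariant based on swap-improvement will stall at many non-L-shaped paths. To salvage the approach you would have to classify all swap-local maxima (under the left-ahead constraint from \cref{rmk:leftahead}) and argue they are fiberwise isomorphic to some L-shape, which is nontrivial. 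The paper's block moves sidestep this entirely because each of \eqref{eq:apptoprove1} and \eqref{eq:apptoprove2} aggregates a whole run of unit swaps into a single closed-form expression whose sign can be controlled directly.
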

\begin{remark}\label{rmk:leftahead}
We prove the even case. The proof of the odd case is more or less the same (although the values are slightly different). However, we need to prove that, for any representation as in \eqref{eq:repcompgrphs}, $l_i\geq r_i$ for $i=1,\ldots,c$.
In the even case, we may assume this.

See Figure \ref{fig:lb} for an example of and Appendix \ref{app:lies} for a proof of this assumption.
\end{remark}
\begin{proof}
    It follows from Corollary \ref{cor:maxnmbrbars} that, for a filtration $\mathcal G$ to have a maximum number of bars, we must have $G_{e_n}=\mathcal T_{n,2}$.
    For a filtration $\mathcal G$ to also have maximal total persistence, it immediately follows by Theorem \ref{thm:turanoptfilt} that $G_i\cong H_i$ for $i=1,\ldots,e_n$.
    
    Any filtration $\mathcal G$ that we consider in this proof will therefore be such that $G_i\cong H_i$ for $i=1,\ldots,e_n$. We are interested in the remainder of the filtration, which we can represent by a sequence of tuples as in \eqref{eq:repcompgrphs}.
    Within the proof, we will further reduce it by writing 
    \begin{equation}\label{eq:reducednot}
    \bigl(l_1,r_1\bigr),\bigl(l_2,r_2\bigr),\ldots, \bigl(l_c,r_c\bigr) \text{ instead of }\bigl(K_{l_1},K_{r_1}),\bigl(K_{l_2},K_{r_2}),\ldots,\bigl(K_{l_c},K_{r_c}).
    \end{equation}
    Furthermore, given a filtration $\mathcal G$ represented by $\bigl(l_1,r_1\bigr),\bigl(l_2,r_2\bigr),\ldots, \bigl(l_c,r_c\bigr)$, we define its \emph{alternation depth} $d_{\mathcal G}$ as follows:
    \begin{itemize}
        \item We let $d_{\mathcal G}\coloneqq 1$ if $\bigl(l_i,r_i\bigr)=\bigl(l_{i-1}+1,r_{i-1}+1\bigr)$ for all $i=2,\ldots,c$.
        \item Otherwise, let $d_{\mathcal G}\geq 2$ be such that $\bigl(l_i,r_i\bigr)=\bigl(l_{i-1}+1,r_{i-1}+1\bigr)$ for all $i=d_{\mathcal G}+1,\ldots,c$, but $\bigl(l_{d_{\mathcal G}},r_{d_{\mathcal G}}\bigr)\neq \bigl(l_{d_{\mathcal G}-1}+1,r_{d_{\mathcal G}-1}+1\bigr)$.
    \end{itemize}
    The idea of the proof is to start with a filtration $\mathcal G = \bigl\{G_i\bigr\}_{i=1}^{\binom{n-1}2+1}$, with a representation as in \eqref{eq:reducednot}, and show that we can change the filtration in steps, to obtain in every step a filtration $\mathcal G'$ such that $\BT_1\bigl(\mathcal G'\bigr)\geq \BT_1\bigl(\mathcal G\bigr)$, and end up with an optimal solution.
    
    Now, writing $n=2p$, let $\mathcal G$ be a filtration, represented by \[\bigl(1,1) = \bigl(l_1,r_1),\bigl(l_2,r_2\bigr),\ldots,\bigl(l_c,r_c) = \bigl(n/2-1,n/2-1\bigr)=\bigl(p-1,p-1\bigr).\]
    We shall show that
    \begin{enumerate}
        \item If the alternation depth $d_{\mathcal G}>j_n$,
        then we can change $\mathcal G$ to a filtration $\mathcal{G'}$ such that $d_{\mathcal G'}< d_{\mathcal G}$ and $\BT_1\bigl(\mathcal G'\bigr) \geq \BT_1\bigl(\mathcal G\bigr)$.
        \item If $l_2< j_n$, then we can change $\mathcal G$ to a filtration $\mathcal G'$ represented by $\bigl(l_1',r_1'\bigr),\ldots,\bigl(l_{c'}',r_{c'}'\bigr)$ with $l_2'=l_2+1$, such that $c'=c-1$ or $c'=c$ (depending on $\mathcal G$) and $\BT_1\bigl(\mathcal G'\bigr) \geq \BT_1\bigl(\mathcal G\bigr)$.
    \end{enumerate}
    For the first part, let $\mathcal G$ be a filtration with alternation depth $d_{\mathcal G}>j_n$. Its representation, substituting $n$ by $2p$, is of the form
    \[
    (1,1),\ldots,(t,u),(v,w),\bigl(d_{\mathcal G},d_{\mathcal G}\bigr),\bigl(d_{\mathcal G}+1,d_{\mathcal G}+1\bigr),\ldots,\bigl(p-1,p-1\bigr) \text{ with } \bigl(v,w\bigr)\neq \bigl(d_{\mathcal G}-1,d_{\mathcal G}-1\bigr).
    \]
    Since we assumed that $v\geq w$, we must have $w<d_{\mathcal G}-1$. We transform $\mathcal G$ into a filtration $\mathcal G'$ which is the same as $\mathcal G$, except that we add an extra alternation step. We consider two cases.
    \begin{itemize}
        \item Case $1$: $v<d_{\mathcal G}-1$. In this case, $\mathcal G'$ is represented by
        \[
        (1,1),\ldots (t,u),(v,w),\bigl(d_{\mathcal G}-1,d_{\mathcal G}-1\bigr), \bigl(d_{\mathcal G},d_{\mathcal G}\bigr),\bigl(d_{\mathcal G}+1,d_{\mathcal G}+1\bigr),\ldots,\bigl(p-1,p-1\bigr).
        \]
        \item Case $2$: $v=d_{\mathcal G}-1$. In this case, $\mathcal G'$ is represented by
        \[
        (1,1),\ldots (t,u),\bigl(v=d_{\mathcal G}-1,d_{\mathcal G}-1\bigr), \bigl(d_{\mathcal G},d_{\mathcal G}\bigr),\bigl(d_{\mathcal G}+1,d_{\mathcal G}+1\bigr),\ldots,\bigl(p-1,p-1\bigr).
        \]
    \end{itemize}
    In both cases, we see that $d_{\mathcal G'}<d_{\mathcal G}$. Moreover, as we show in more detail in Appendix \ref{app:lies},
    \begin{equation}\label{eq:apptoprove1}
        \BT_1\bigl(\mathcal G'\bigr) - \BT_1\bigl(\mathcal G\bigr) = \frac16\bigl(d_{\mathcal G}-1-w\bigr)\bigl(d_{\mathcal G}-w\bigr)\bigl(4d_{\mathcal G}+2w-3p-2\bigr)\geq0.
    \end{equation}
    To see that this difference is non-negative, first recall that $w<d_{\mathcal G}-1$, so the first two terms are strictly positive. Furthermore, since $d_{\mathcal G}>\lfloor 3p/4-1/4\rfloor$, we have $d_{\mathcal G}\geq \lceil 3p/4\rceil\geq 3p/4$, so
    \[
    4d_{\mathcal G}+2w-3p-2\geq 2w-2\geq0,
    \]
    since $w\geq 1$. Note that $\BT_1\bigl(\mathcal G'\bigr) - \BT_1\bigl(\mathcal G\bigr)=0$ if and only if $w=1$ and $d_{\mathcal G}=3p/4$.

    We now show the second part. We let $\mathcal G$ be a filtration with representation
    \[
    (1,1),(j,k),(l,m),\ldots,(p-1,p-1) \text{ with $j\leq 3p/4-1$.}
    \]
    We transform $\mathcal G$ into a filtration $\mathcal G'$ such that its start is different from $\mathcal G$'s, but from the tuple $(l,m)$ onwards, $\mathcal G$ and $\mathcal G'$ are the same. We again consider two cases.
    \begin{itemize}
        \item Case $1$: $l>j+1$. In this case, $\mathcal G'$ is represented by
        $(1,1),(j+1,k),(l,m),\ldots,(p-1,p-1).$
        \item Case $2$: $l=j+1$. In this case, $\mathcal G'$ is represented by
        $(1,1),(j+1=l,m),\ldots,(p-1,p-1).$
    \end{itemize}
    In both cases, as we show in more detail in Appendix \ref{app:lies}, we have
    \begin{equation}\label{eq:apptoprove2}
        \BT_1\bigl(\mathcal G'\bigr) - \BT_1\bigl(\mathcal G\bigr) = (k-1)\bigl(j(p-j-1)-\frac k6(3p-2k-2)\bigr)\overset{(i)}{\geq} 2\bigl((j-k+1)(j-k)\bigr)\overset{(ii)}{\geq}0,
    \end{equation}
    where we use in $(i)$ that $j\leq 3p/4-1$, and in $(ii)$ that $j\geq k$. If $n\equiv 2,4,6\mod 8$, then $j\leq 3p/4-1$ implies that $j\leq \lfloor 3p/4-1\rfloor = \lfloor 3p/4-5/4\rfloor = j_n-1$. If $n\equiv 0 \mod 8$, then $j\leq 3p/4-1 = j_n$. Only in this case, and if $j=k=3p/4-1$, then the inequality is sharp.

    Hence, if we start with an arbitrary filtration $\mathcal G$, then we can transform it step by step into a more optimal filtration $\mathcal G^1$ with $d_{\mathcal G^{1}}\leq j_n$, by the first part. Then, by the second part, we can transform $\mathcal G^1$ step by step into a more optimal filtration $\mathcal G^2$ that starts with $(1,1),(j_n,j_n)$, with alternation depth $d_{\mathcal G^{2}}= j_n$. Hence, $\mathcal G^2$ is the optimal filtration as described in the statement for $n\equiv 2,4,6\mod 8$.

    In the case that $n\equiv 0 \mod 8$, we saw in the proof of case $1$ and case $2$, that the filtration $\mathcal G^3$ starting with $(1,1),\bigl(j_{n}+1, j_n+1\bigr)$ and having alternation depth $d_{G^3}=j_n+1$ satisfies $\BT_1(\mathcal G^3) = \BT_1(\mathcal G^2)$, hence both $\mathcal G^2$ and $\mathcal G^3$ are optimal.
\end{proof}

\section{Discussion}
\label{sec:discussion}
In this paper, we have answered several key questions in the context of topological data analysis. However, many questions remain open, and we propose the following conjectures.

\begin{conjecture}
    If $\mathcal{G}$ is a filtration on $n$ vertices, then the number of intervals in the barcode of $\mathcal{G}$ in homology degree $k$ satisfies $
    |\BC_k(\mathcal{G})| \leq \BF_k(\mathcal{T}_{n,k+1}).$
\end{conjecture}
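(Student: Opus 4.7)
The plan is to adapt the inductive strategy in the proof of \cref{thm:turanopt} to the filtered setting, working at the level of persistence barcodes via a Mayer--Vietoris-type inequality for the bar count. The key input is that $|\BC_k(\cdot)|$ should satisfy the same flavor of recursion as $\beta_k$ does in \cref{lem:MV-lk}, so that the Tur\'an bound propagates through the induction on $n$. Concretely, for any vertex $v\in V(G_m)$, writing $\mathcal{G} - v = \{G_i - v\}_i$ and $\lk_v(\mathcal{G}) = \{\lk_{X(G_i)}(v)\}_i$, I would first establish
\[
|\BC_k(\mathcal{G})| \leq |\BC_k(\mathcal{G} - v)| + |\BC_{k-1}(\lk_v(\mathcal{G}))|.
\]
This follows by applying Mayer--Vietoris naturally in $i$ to the cover $X(G_i) = X(G_i - v) \cup \stt(v)$ (whose closed star is contractible), giving an exact sequence of persistence modules $\cdots \to H_k(\lk_v(\mathcal{G})) \to H_k(\mathcal{G} - v) \to H_k(\mathcal{G}) \to H_{k-1}(\lk_v(\mathcal{G})) \to \cdots$, together with the elementary fact that for any interval-decomposable persistence modules with $A \to B \to C$ exact at $B$ one has $|\mathcal{B}(B)| \leq |\mathcal{B}(A)| + |\mathcal{B}(C)|$: decompose $B$ via the short exact sequence $0 \to \im(A \to B) \to B \to B/\im(A \to B) \to 0$ and observe that the image is a quotient of $A$ while $B/\im(A\to B)$ embeds into $C$, and both quotients and submodules of interval-decomposable modules have at most as many bars as the original.

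Next I would iterate this inequality as in the proof of \cref{thm:turanopt}: choose a vertex $v$ of minimum degree $d$ in $\overline{G_m}$ with neighbors $v_1, \ldots, v_d$ in $\overline{G_m}$, and apply the MV inequality successively on $v_1, \ldots, v_d$ to obtain
\[
|\BC_k(\mathcal{G})| \leq |\BC_k(\mathcal{G} - \{v_1, \ldots, v_d\})| + \sum_{i=0}^{d-1} |\BC_{k-1}(\lk_{v_{i+1}}(\mathcal{G} - \{v_1, \ldots, v_i\}))|.
\]
By the inductive hypothesis on $n$ in degree $k-1$, each link term would be bounded by $\BF_{k-1}(\mathcal{T}_{m_i, k})$ for appropriate $m_i \leq n-d-1$, and combining these via \cref{lem:maxprod} --- exactly as in \cref{thm:turanopt} --- should recover the target bound $\BF_k(\mathcal{T}_{n,k+1})$.

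The main obstacle is the residual term $|\BC_k(\mathcal{G} - \{v_1, \ldots, v_d\})|$. In the static proof of \cref{thm:turanopt}, this vanishes because $X(\overline{G}_d)$ is a cone and $\beta_k = 0$ outright. In the filtered setting only the \emph{final} complex $X(G_m - \{v_1, \ldots, v_d\})$ is guaranteed to be a cone (with apex $v$); intermediate complexes need not be cones, so the residual bar count can be positive. Closing the induction will likely require either a secondary MV step with apex $v$, reducing to a filtration on $n-d-1$ vertices, or a strengthened inductive hypothesis that separately tracks filtrations whose terminal complex is a cone. An alternative route --- directly generalising the triangle-killing argument of \cref{prop:maxint} to produce a $K_{k+2}$-free subgraph of $G_m$ carrying $|\BC_k(\mathcal{G})|$ linearly independent cycles --- runs into the difficulty flagged in the remark following \cref{prop:maxint}: removing one edge from a $(k+2)$-clique destroys $k$ of its $k$-faces simultaneously, and a single modification by the boundary of the corresponding $(k+1)$-simplex adjusts coefficients on $k+2$ faces at once, so preserving linear independence of the cycle representatives throughout the reduction appears to demand a substantially more delicate combinatorial argument than in the $k=1$ case.
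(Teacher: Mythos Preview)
This statement is a \emph{conjecture} in the paper (\cref{sec:discussion}); there is no proof to compare against. The case $k=1$ is settled by \cref{prop:maxint} and \cref{cor:maxnmbrbars}, and the remark immediately following \cref{prop:maxint} already flags the obstruction to higher $k$ that you rediscover in your final paragraph.

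Your Mayer--Vietoris inequality $|\BC_k(\mathcal{G})|\leq |\BC_k(\mathcal{G}-v)|+|\BC_{k-1}(\lk_v\mathcal{G})|$ is correct: the long exact sequence is natural in $i$, and for an exact sequence $A\to B\to C$ of pointwise finite-dimensional persistence modules over a finite total order one has $|\mathcal{B}(B)|\leq|\mathcal{B}(A)|+|\mathcal{B}(C)|$ (e.g.\ via the Bauer--Lesnick induced matching theorem, or directly by writing $|\mathcal{B}(M)|=\sum_i\dim\mathrm{coker}(M_{i-1}\to M_i)$ and applying the snake lemma at each index). However, the residual term is a genuine gap, and your proposed fix --- a secondary MV step on the cone apex $v$ --- does not close the induction. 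If $G_m=K_n$ then $d=0$, and that single step together with the inductive hypothesis on $n-1$ vertices gives
\[
|\BC_k(\mathcal{G})|\;\leq\;\BF_k(\mathcal{T}_{n-1,k+1})+\BF_{k-1}(\mathcal{T}_{n-1,k});
\]
already for $n=6$, $k=1$ this is $2+4=6>4=\BF_1(\mathcal{T}_{6,2})$. The same overshoot occurs for general $d$: the quantity $\BF_k(\mathcal{T}_{n-d-1,k+1})+(d+1)\,\BF_{k-1}(\mathcal{T}_{n-d-1,k})$ is not in general bounded by $\BF_k(\mathcal{T}_{n,k+1})$, so \cref{lem:maxprod} cannot absorb the extra term. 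Your own conclusion that new ideas are required is accurate --- and is precisely why the paper records this as an open conjecture rather than a theorem.
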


\begin{conjecture}
    The extremal filtrations described in \cref{sec:liesmagic} achieve the maximal total persistence over any edgewise filtration of flag complexes in homology degree 1.
\end{conjecture}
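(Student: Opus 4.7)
The plan is to translate the optimization into a concrete combinatorial problem about clique-growth schedules and then solve it by local exchange arguments.

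First, I would peel off the easy part: Corollary \ref{cor:maxnmbrbars} forces $G_{e_n}=\mathcal{T}_{n,2}$ for any filtration with the maximum number of bars, and Theorem \ref{thm:turanoptfilt} then forces $G_i\cong H_i$ for $i\leq e_n$, since any fiberwise-suboptimal initial segment can only hurt total persistence. Applying Lemma \ref{lem:completegraphs} and Theorem \ref{thm:vanishing}, I restrict attention to the active window $e_n<i\leq\binom{n-1}{2}+1$, where every $G_i$ is the join of a graph on $V_1$ consisting of a single clique plus isolated vertices (up to one transitional vertex) and a similar graph on $V_2$. This encodes each admissible filtration by a sequence of plateaus $(K_{l_1},K_{r_1}),\ldots,(K_{l_c},K_{r_c})$ as in \eqref{eq:repcompgrphs}.

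Using Proposition \ref{prop:compbip}, I would then write the total persistence $\BT_1(\mathcal{G})$ as an explicit polynomial in the $l_i$ and $r_i$: inside a growth phase where the left clique grows from $K_{l_i}$ to $K_{l_{i+1}}$ with the right side fixed at $K_{r_i}$, the $d_1$-values, and hence $\BF_1$-values, follow a fully determined pattern, and summing over the $\binom{l_{i+1}}{2}-\binom{l_i}{2}$ edges in the phase yields a closed form. Summing over all phases gives a polynomial to be maximized over monotone sequences starting at $(1,1)$ and ending at $(\lceil n/2\rceil-1,\lfloor n/2\rfloor-1)$.

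To actually maximize, I would introduce an alternation depth $d_{\mathcal{G}}$ capturing the length of the final stretch in which every tuple advances by $(+1,+1)$, and prove two local improvement lemmas:
\begin{enumerate}
\item if $d_{\mathcal{G}}$ is too large (specifically $>j_n$), one can insert an extra alternation step without decreasing $\BT_1$;
\item if the first nontrivial tuple $(l_2,r_2)$ has $l_2<j_n$, one can bump $l_2$ up by one without decreasing $\BT_1$.
\end{enumerate}
In both moves the change in total persistence should telescope into a low-degree polynomial in the affected indices, whose non-negativity under the relevant constraints reduces to simple inequalities such as $4d_{\mathcal{G}}+2w-3p-2\geq 0$ (using $d_{\mathcal{G}}\geq 3p/4$ and $w\geq 1$). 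For the odd case, one must additionally justify the WLOG assumption $l_i\geq r_i$; this can be done by a symmetry-and-monotonicity argument showing that flipping a schedule to keep the larger clique on the same side can only help.

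The main obstacle is the careful bookkeeping inside each growth phase: tracking exactly when $d_t$ drops by $1$ versus stays constant, and correctly accumulating the resulting $\BF_1$-contributions into a clean closed form that telescopes under the local moves. Once the two inequalities are in hand, iterating the moves from an arbitrary schedule drives it to the unique schedule with $l_2=j_n$ and $d_{\mathcal{G}}=j_n$, which is the one in the theorem. Strictness of the inequality in generic positions gives uniqueness up to fiberwise isomorphism, while the boundary case $j_n=3p/4-1$, which occurs exactly when $n\equiv 0\bmod 8$, produces the second optimal schedule reported in the statement.
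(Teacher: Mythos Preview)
The statement you are attempting is a \emph{conjecture}; the paper does not prove it and explicitly says that proving it ``likely requires new ideas.'' Your proposal is, in effect, a sketch of the proof of Theorem~\ref{thm:lies}, not of the conjecture.

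The gap is in your very first sentence. You invoke Corollary~\ref{cor:maxnmbrbars} to force $G_{e_n}=\mathcal{T}_{n,2}$, but that corollary only applies to filtrations with the maximal \emph{number of bars}. The conjecture drops that hypothesis: it asserts that the filtrations of \cref{sec:liesmagic} maximize total persistence over \emph{all} edgewise filtrations of $K_n$, including those that never pass through $\mathcal{T}_{n,2}$. Nothing in your argument rules out a filtration that sacrifices the number of bars in exchange for larger $\BF_1$ values at intermediate edge counts, and the paper's own Example in \cref{sec:discussion} shows this is a genuine obstruction: for $n=8$ and $21$ edges, the maximum of $\BF_1$ is attained by a graph that does \emph{not} contain $\mathcal{T}_{8,2}$ as a spanning subgraph. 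Hence the reduction to clique-growth schedules in \eqref{eq:repcompgrphs}, on which your entire plan rests, is not available without first proving that some optimal filtration contains the Tur\'an graph, and that is precisely the open content of the conjecture.

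Everything you wrote from ``Using Proposition~\ref{prop:compbip}\ldots'' onward is a faithful outline of the paper's proof of Theorem~\ref{thm:lies} (the two local moves, the alternation depth, the polynomial telescoping, the $n\equiv 0\bmod 8$ tie). That part is fine, but it addresses the already-proved restricted problem, not the conjecture.
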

Proving the latter conjecture likely requires new ideas, as no filtration $\mathcal{G}$ can maximize $\BF_1(G_i)$ for all $i$. In fact, extremal values need not be realized by graphs containing Turán graphs as spanning subgraphs, as illustrated by the following example.

\begin{example}
    Let $n = 8$ and $m = 16 + 5$. Suppose $G$ contains the Turán graph $\mathcal{T}_{8,2}$. By \cref{prop:compbip}, we seek to add 5 edges to the Turán graph to maximize the product $(d_1-1)(d_2-1)$. It is easy to see that this product can be at most 1. However, by partitioning the vertices as $V = V_1 \cup V_2$, with $|V_1| = 3$ and $|V_2| = 5$, and adding all edges between $V_1$ and $V_2$, we need to add 6 more edges. Adding these as a $K_4$ in the larger partition gives $d_1 = 2$ and $d_2 = 1$; see Figure \ref{fig:ex44-35}. The value $2$ is extremal; see \cref{fig:code}.
\end{example}
\begin{figure}
    \centering
    \includegraphics[scale=0.75]{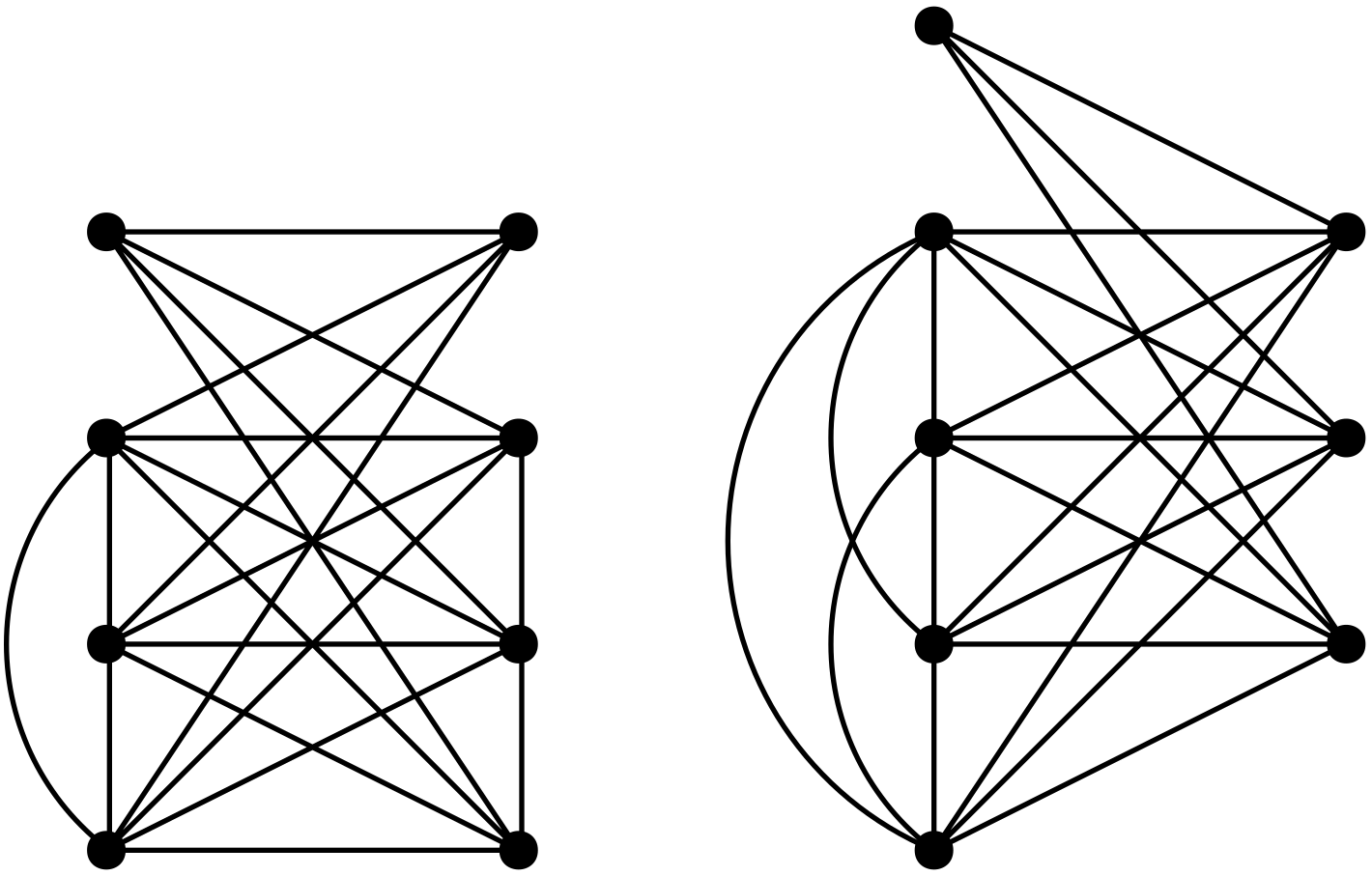}
    \caption{Two graphs $G_l$ (left) and $G_r$ (right) such that $\BF_1(G_l)=1$ and $\BF_1(G_r)=2$.}
    \label{fig:ex44-35}
\end{figure}

Finding extremal values of $\mathbb{Z}$-linear functions defined on simplicial complexes with \emph{precisely} $n$ vertices and $m$ edges presents a challenging and interesting direction for future research.

\begin{conjecture}
    Let $\mathbf{G}(n,m)$ denote the collection of graphs on $n$ vertices and $m$ edges. If $G \in \mathbf{G}(n,m)$ and $\BF_1(G) = \max_{H \in \mathbf{G}(n,m)} \BF_1(H)$, then $G$ contains a complete bipartite spanning subgraph.
\end{conjecture}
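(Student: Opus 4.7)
The plan is to argue by contradiction via an iterated Mayer--Vietoris analysis. Suppose $G^*\in\mathbf{G}(n,m)$ maximizes $\BF_1$ but contains no complete bipartite spanning subgraph. Taking complements, this is equivalent to the assertion that $H^*\coloneqq\overline{G^*}$ is connected, and the goal is to derive a contradiction by exhibiting a $G'\in\mathbf{G}(n,m)$ with $\BF_1(G')>\BF_1(G^*)$.

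First, specialize the proof of \cref{thm:turanopt} to $k=1$. Let $d$ be the minimum degree of $H^*$, pick a vertex $v$ realizing it with neighbors $v_1,\ldots,v_d$, and write $H^*_i=H^*-\{v_1,\ldots,v_i\}$. Iterating \cref{lem:MV-lk} yields
\[
\BF_1(G^*)=\beta_1(\Ind(H^*))\leq\sum_{i=0}^{d-1}\beta_0\bigl(\Ind(H^*_i-N_{H^*_i}[v_{i+1}])\bigr)\leq d(n-d-2),
\]
where the second inequality uses \cref{thm:turanopt} for $k=0$ on each summand (each subgraph has at most $n-d-1$ vertices). On the other hand, \cref{prop:compbip} gives $\beta_1(\Ind(K_{d+1}\sqcup K_{n-d-1}))=d(n-d-2)$; this canonical disconnected complement has $\binom{d+1}{2}+\binom{n-d-1}{2}$ edges, but if this does not match $\binom{n}{2}-m$, one can perturb within the class of disjoint unions of two subgraphs of sizes $d+1$ and $n-d-1$ (or nearby sizes), tracking the resulting $\beta_1$ via \cref{prop:compbip}, to produce a comparable $H'$ matching the required edge count.

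The contradiction is then completed by showing that connectedness of $H^*$ forces the Mayer--Vietoris bound to be strict. Tightness at every step would simultaneously force each $\Ind(H^*_i-N_{H^*_i}[v_{i+1}])$ to be $\beta_0$-extremal and every intermediate $\beta_k$-inequality to be tight. The extremal case of the $\beta_0$-bound from \cref{thm:turanopt} characterizes a disjoint-union-of-cliques structure on the complement of the link graph, and propagating this rigidity upward through the recursion should force $H^*$ itself to decompose as a disjoint union, contradicting the connectedness assumption.

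The hard part is precisely this equality analysis: turning tightness of an iterated homological inequality into a combinatorial disjoint-union structure on $H^*$ is significantly more delicate than analogous stability results in extremal graph theory, because $\beta_1$ of the independence complex is a global topological invariant rather than a combinatorial count. A plausible path forward is either a compression/shifting argument on flag complexes that monotonically weakly increases $\BF_1$ under fixed edge count and terminates only at graphs with complete bipartite spanning subgraphs, or an explicit $1$-cycle argument exhibiting a cycle in $\Ind(H^*)$ that survives the triangle fill-in of $X(G^*)$ only when $H^*$ is disconnected. It is also worth verifying the conjecture numerically for small $(n,m)$, particularly in the regime highlighted by the $n=8$, $m=21$ example in the discussion, where the extremal bipartite partition is unbalanced, since subtleties in that regime are the most likely source of counterexamples.
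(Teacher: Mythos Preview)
This statement is listed in the paper as an \emph{open conjecture} (in the Discussion section); the paper gives no proof, only supporting numerical evidence and the $n=8$, $m=21$ example. So there is nothing in the paper to compare your argument against, and what you have written is, as you yourself indicate, a plan rather than a proof.

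As a plan it has two genuine gaps, one of which you flag and one of which you underplay.

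\textbf{The edge-count matching step is the real obstruction.} You correctly bound $\BF_1(G^*)\le d(n-d-2)$ via the iterated Mayer--Vietoris argument, and you observe that the graph with complement $K_{d+1}\sqcup K_{n-d-1}$ attains $d(n-d-2)$. But that comparison graph lies in $\mathbf{G}(n,m')$ with $m'=\binom{n}{2}-\binom{d+1}{2}-\binom{n-d-1}{2}$, and there is no reason for $m'$ to equal $m$; indeed $d$ is determined by $G^*$, not by $m$. Your suggestion to ``perturb within the class of disjoint unions\ldots\ tracking $\beta_1$ via \cref{prop:compbip}'' is precisely where the entire difficulty of the conjecture hides: once you start adding or removing edges inside the parts to hit the target edge count, $\BF_1$ can drop below $\BF_1(G^*)$, and controlling this is exactly the extremal problem the conjecture poses. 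The paper's own \cref{fig:code} and the $(3,5)$-partition example show that the optimal bipartition sizes jump around as $m$ varies, so no single choice of $d$ will work uniformly.

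\textbf{The rigidity step you flag is also genuinely open.} Even granting strictness of the Mayer--Vietoris bound whenever $H^*$ is connected, you would only get $\BF_1(G^*)<d(n-d-2)$, which is useless without the matching step above. And the rigidity itself is not clear: tightness in each summand $\beta_0(\Ind(H^*_i-N_{H^*_i}[v_{i+1}]))=n-d-2$ only says each link-deletion has $n-d-1$ vertices and is totally disconnected in the complement, which constrains the local structure around $v$ but does not obviously force $H^*$ to globally split. The ``propagating rigidity upward'' and ``compression/shifting'' suggestions are reasonable directions to explore, but neither is close to an argument.

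In short: your reformulation (extremal $G$ has disconnected complement) is correct and natural, and the Mayer--Vietoris framework is the right toolbox, but the proposal does not advance beyond what the paper already knows. This remains an open problem.
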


\begin{figure}
\centering
\begin{subfigure}{0.5\textwidth}
  \centering
  \includegraphics[width=\linewidth]{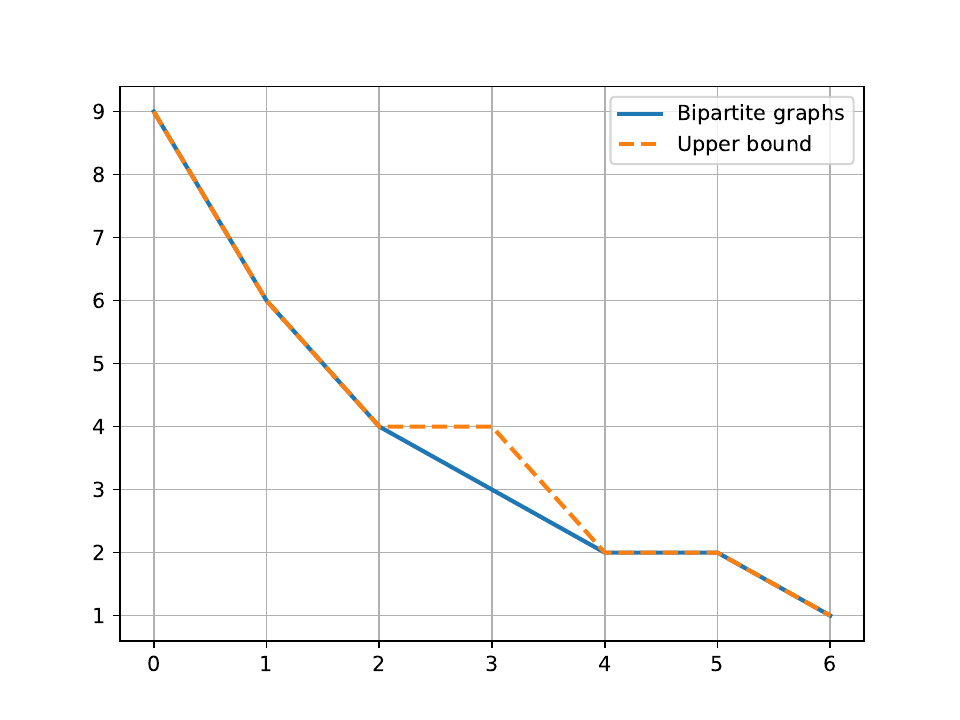}
  \label{fig:sub1}
\end{subfigure}%
\begin{subfigure}{0.5\textwidth}
  \centering
  \includegraphics[width=\linewidth]{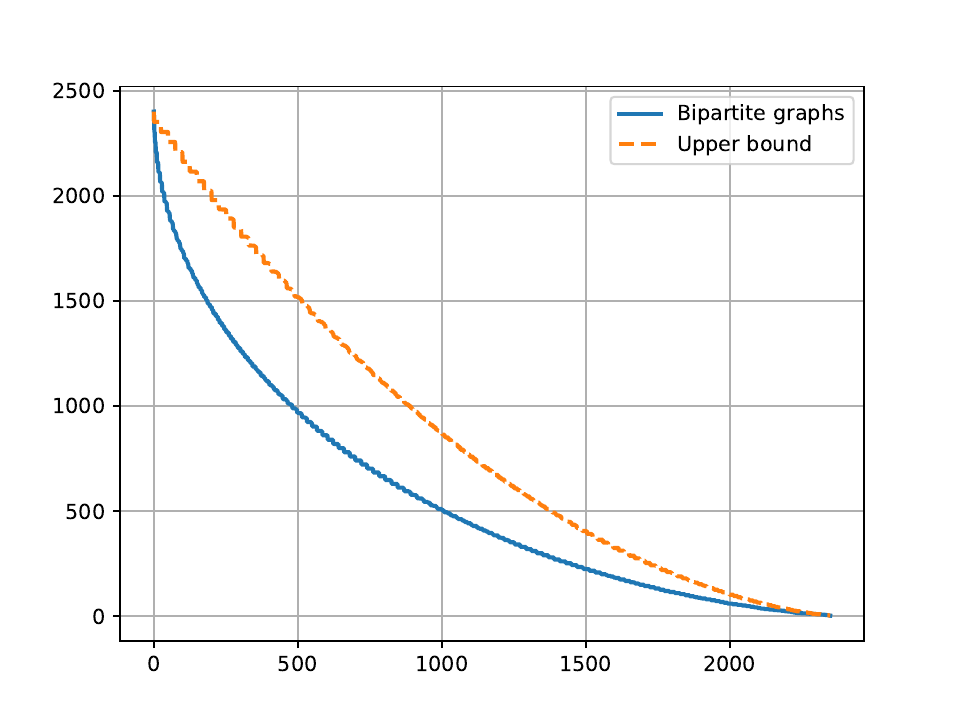}
  \label{fig:sub2}
\end{subfigure}
\caption{The vertical axis is $\BF_1$, and the value $k$ along the $x$-axis represents $(n/2)^2+k$ edges. Here $n$ is the number of vertices in the underlying graph; $n=8$ (left) and $n=100$ (right). The solid curves give the optimal value of $\BF_1(G)$ for any graph $G$ containing a complete bipartite spanning subgraph, and the dashed line is an upper bound derived from the proof of \cref{thm:turanopt} in conjuction with \cref{lem:removingedges} (details omitted). }
\label{fig:code}
\end{figure}

\bibliographystyle{plain}

\bibliography{Betti}
\appendix
\section{Flag complexes and the Vietoris--Rips complex}
\label{sec:appVR}
\begin{lemma}
\label{lem:flagrips}
    Let $\mathcal{G} = \{G_i\}_{i=1}^{m}$ be an edgewise filtration of graphs on $n$ vertices. Then, there exists a metric $d$ on the vertices of $G$, such that if the $m$ smallest pairwise distances of points in $P$ are denoted
    \[ d_1 < d_2 < \ldots < d_m,\]
    then $G_i = {\rm VR}_{d_i}(V(G))$ for all $1\leq i\leq m$. 
\end{lemma}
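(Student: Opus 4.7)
The plan is to construct the metric explicitly by placing all pairwise distances in a narrow interval, so that the triangle inequality is automatic while the ordering of the distances encodes the edgewise filtration. Since the Vietoris--Rips complex at scale $r$ is the flag complex on the graph of pairs at distance $\leq r$, it suffices to arrange that the edges of $G_i$ are precisely those pairs of vertices at distance $\leq d_i$; the equality $\mathrm{VR}_{d_i}(V(G)) = X(G_i)$ will then follow from the flag property.

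\textbf{Construction.} List the edges of the filtration in the order they are added: $e_i$ is the unique edge in $E(G_i)\setminus E(G_{i-1})$ for $i=1,\ldots,m$, with $G_0$ being the empty graph on $V=V(G_m)$. Fix some $\epsilon \in (0, 1/(m+1))$, and define $d\colon V\times V\to \mathbb{R}_{\geq 0}$ by
\[
d(u,v) =
\begin{cases}
0 & \text{if } u=v,\\
1 + i\epsilon & \text{if } \{u,v\}=e_i,\\
2 & \text{if } \{u,v\}\notin E(G_m) \text{ and } u\neq v.
\end{cases}
\]

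\textbf{Verification.} There are three things to check, each essentially routine. First, $d$ is a metric: symmetry and vanishing on the diagonal are immediate; for the triangle inequality, note that every value $d(u,v)$ for $u\neq v$ lies in $[1,2]$, so for any three distinct vertices $a,b,c$ one has $d(a,c)\leq 2 = 1+1 \leq d(a,b)+d(b,c)$. Second, the $m$ smallest pairwise distances among distinct vertices of $V$ are exactly $d_i = 1+i\epsilon$ for $i=1,\ldots,m$, in strictly increasing order, and each distance for a non-edge of $G_m$ equals $2 > 1 + m\epsilon$, so these come after all the $d_i$. Third, for each $i$ the set of pairs $\{u,v\}$ with $d(u,v)\leq d_i$ is exactly $\{e_1,\ldots,e_i\} = E(G_i)$; since $\mathrm{VR}_{d_i}(V)$ is the flag complex of this edge set, and $X(G_i)$ is the flag complex of $E(G_i)$, we conclude $\mathrm{VR}_{d_i}(V)=X(G_i)$ (identifying $G_i$ with its flag complex as in the statement).

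\textbf{Main obstacle.} There is essentially no obstacle: the only risk is failing the triangle inequality, and that is neutralized by compressing all distances into the interval $[1,2]$, a standard trick for realizing an arbitrary graph as the $1$-skeleton of a Vietoris--Rips complex. The small perturbation $i\epsilon$ is used solely to make the distances distinct and to realize the prescribed filtration order.
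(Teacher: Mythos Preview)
Your proof is correct and follows essentially the same approach as the paper: both constructions place all pairwise distances into the interval $[1,2]$ so that the triangle inequality is automatic, and order the edge distances according to the filtration index (the paper uses $d(p_i,p_j)=2-1/I(\{p_i,p_j\})$ where $I$ records the step at which the edge appears, while you use $1+i\epsilon$). Your write-up is in fact slightly more careful, since you explicitly assign distance $2$ to non-edges of $G_m$ and verify the triangle inequality, both of which the paper leaves implicit.
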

\begin{proof}
    Let $I({p_i, p_j})$ denote the index at which the edge $\{p_i, p_j\}$ is added to $\mathcal{G}$. Let $d(p_i, p_j) = 2 - 1/I(\{p_i, p_j\}).$ Then $d$ is a metric and the distance between $p_i$ and $p_j$ is the $I(\{p_i, p_j\}$-th shortest distance.
\end{proof}

\section{Proofs}

\subsection{Proofs from Section \ref{sec:background}}
\label{subsec:appeelementary}
\begin{lemma}[\cref{lem:MV-lk}]
Let $K$ be a simplicial complex and $v\in K$ a vertex. Then, for all $k\geq 1$,\[ \beta_k(K) \leq \beta_k(K-v) + \beta_{k-1}(\lk_K(v)).\]
\end{lemma}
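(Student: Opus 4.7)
The plan is to prove this via a standard Mayer--Vietoris decomposition of $K$ into the deletion and the closed star of $v$. Concretely, I would write
\[ K = (K-v) \cup \stt_K(v), \qquad (K-v) \cap \stt_K(v) = \lk_K(v), \]
which holds because every simplex of $K$ either avoids $v$ (and so lies in $K-v$) or contains $v$ (and so lies in $\stt_K(v)$), while a simplex lies in both precisely when it is a face of the link. The first step is to verify this set-theoretic decomposition directly from the definitions of deletion, closed star, and link given in the Background section; this is routine.

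Next I would invoke reduced Mayer--Vietoris for the pair $(K-v, \stt_K(v))$ to obtain the long exact sequence
\[ \cdots \to \tilde H_k(\lk_K(v)) \to \tilde H_k(K-v) \oplus \tilde H_k(\stt_K(v)) \to \tilde H_k(K) \to \tilde H_{k-1}(\lk_K(v)) \to \cdots \]
The key observation is that $\stt_K(v)$ is a simplicial cone with apex $v$, hence contractible, so $\tilde H_k(\stt_K(v)) = 0$ for every $k$. The sequence therefore simplifies to an exact three-term fragment
\[ \tilde H_k(K-v) \to \tilde H_k(K) \to \tilde H_{k-1}(\lk_K(v)), \]
from which exactness gives
\[ \dim \tilde H_k(K) \leq \dim \tilde H_k(K-v) + \dim \tilde H_{k-1}(\lk_K(v)), \]
which is the claimed inequality.

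There is no real obstacle; the only point requiring a line of care is the hypothesis $k \geq 1$, which is needed so that the contractibility of $\stt_K(v)$ gives $\tilde H_k(\stt_K(v)) = 0$ on the left as well as on the right of the fragment (for $k = 0$ one would have to handle the reduced $\tilde H_0$ separately, but this is outside the statement). I would also briefly remark that the argument is coefficient-agnostic, consistent with the convention fixed in the Background section.
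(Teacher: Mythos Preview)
Your proposal is correct and follows essentially the same approach as the paper: decompose $K$ as $(K-v)\cup \stt_K(v)$ with intersection $\lk_K(v)$, note that the closed star is a cone so its reduced homology vanishes, and read off the inequality from the Mayer--Vietoris long exact sequence. The only minor quibble is your justification for the hypothesis $k\geq 1$: contractibility of $\stt_K(v)$ already kills $\tilde H_0$, so the real reason one restricts to $k\geq 1$ is to avoid the $\tilde H_{-1}$ term (and the possibility that $\lk_K(v)$ is empty), but this is tangential and does not affect the argument.
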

\begin{proof}
Let $\Delta = \{\tau\in K : v\not\in \tau\}$ and $\Delta' = \stt_K(v)$. Then $\Delta\cup \Delta' = K$ and $\Delta \cap \Delta' = \lk_K(v)$. Observe that $\tilde{H}_k(\Delta') = 0$ as $\Delta'$ is a cone as every simplex contains $v$. Thus, Mayer-Vietoris yields the following long-exact sequence of vector spaces and linear maps
\[ \cdots\to \tilde{H}_{k+1}(\lk_K(v)) \to \tilde{H}_{k+1}(\Delta)\oplus 0 \xrightarrow{g} \tilde{H}_{k+1}(K) \xrightarrow{f} \tilde{H}_k(\lk_K(v)) \to \cdots.\]
Thus,
\[\beta_{k+1}(K) = \dim \ker f + \dim \im f  = \dim \im g + \dim \im f \leq \beta_{k+1}(\Delta) + \beta_k(\lk_k(v)).\]
\end{proof}
\begin{lemma}[\cref{lem:disjointunion}]
    For any two graphs $G$ and $H$, and $k\geq -1$, 
    \[ \beta_k(\Ind(G\sqcup H)) = \sum_{i,j\geq -1; i+j=k-1} \beta_i(\Ind(G))\beta_j(\Ind(H)),\]
    where $\sqcup$ denotes disjoint union.
\end{lemma}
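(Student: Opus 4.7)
The plan is to reduce the statement to the standard Künneth-type formula for the reduced homology of a simplicial join.

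The first step is a direct combinatorial identification: I claim that $\Ind(G\sqcup H) = \Ind(G) * \Ind(H)$, where $*$ denotes the simplicial join. Indeed, a subset $S\subseteq V(G)\sqcup V(H)$ is independent in $G\sqcup H$ if and only if $S\cap V(G)$ is independent in $G$ and $S\cap V(H)$ is independent in $H$, because there are no edges between $V(G)$ and $V(H)$ in the disjoint union. Hence the simplices of $\Ind(G\sqcup H)$ are precisely the disjoint unions $\sigma\sqcup\tau$ with $\sigma\in\Ind(G)$ and $\tau\in\Ind(H)$, which is the definition of the join.

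The second step is to invoke the classical formula for the reduced homology of a join over a field $\mathbf{k}$:
\[
\tilde{H}_k(X*Y;\mathbf{k}) \;\cong\; \bigoplus_{i+j=k-1} \tilde{H}_i(X;\mathbf{k})\otimes_{\mathbf{k}} \tilde{H}_j(Y;\mathbf{k}),
\]
valid for all $k\geq -1$ with the convention that $\tilde{H}_{-1}(\emptyset;\mathbf{k}) = \mathbf{k}$ and $\tilde{H}_{-1}(Z;\mathbf{k})=0$ for nonempty $Z$. This is a standard result in combinatorial topology; a clean way to derive it is to observe that the augmented simplicial chain complex of $X*Y$ is (up to a degree shift by one) the tensor product of the augmented chain complexes of $X$ and $Y$, and then apply the algebraic Künneth theorem, which has no Tor terms over a field.

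The final step is to take $\mathbf{k}$-dimensions on both sides of the isomorphism from step two, applied to $X=\Ind(G)$ and $Y=\Ind(H)$. Since we work over a field, $\dim(A\otimes_{\mathbf{k}} B) = \dim(A)\dim(B)$, and the direct sum turns into a sum of products, yielding exactly $\beta_k(\Ind(G\sqcup H)) = \sum_{i+j=k-1,\,i,j\geq -1}\beta_i(\Ind(G))\beta_j(\Ind(H))$. The only nontrivial ingredient is the join homology formula; once it is granted, the proof is essentially bookkeeping. The main thing to keep straight is the $(-1)$-indexing convention, which is necessary to accommodate the case when one of $\Ind(G),\Ind(H)$ happens to be empty (as a simplicial complex) and to make the $k=-1$ case of the statement behave correctly.
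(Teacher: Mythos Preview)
Your proof is correct and follows essentially the same approach as the paper: identify $\Ind(G\sqcup H)$ with the join $\Ind(G)*\Ind(H)$ and then invoke the K\"unneth formula for reduced homology of a join. You supply more detail on both steps than the paper does (which simply states the identification and quotes the formula), but the argument is the same.
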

\begin{proof}
    Observe that $\Ind(G\sqcup H) = \Ind(G)*\Ind(H)$ where $*$ denotes the join of simplicial complexes. The result is now immediate from Künneth for reduced homology,
    \[ \beta_k(K*L) = \sum_{i,j\geq -1; i+j=k-1} \beta_i(K)\beta_j(L),\]
\end{proof}

\begin{proposition}[\cref{prop:turanbetti}]
    For all integers $n\geq 1$ and $k\geq 1$, we let $n'$ be the smallest positive integer such that $n'\equiv n \bmod k$. We have
    \[\BF_i(\TG_{n,k}) = \begin{cases}
        (\lceil n/k\rceil-1)^{(n')}\cdot (\lfloor n/k\rfloor-1)^{k - n'} & \text{if $i=k-1$},\\
        0 & \text{otherwise.}
    \end{cases}\]
In particular, if $n$ is a multiple of $k$, then 
$\beta_{k-1} = (n/k-1)^k.$
\label{prop:turanbettiproof}
\end{proposition}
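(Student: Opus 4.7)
The plan is to reduce the statement to an iterated application of \cref{lem:disjointunion}, exploiting the definition of $\mathcal{T}_{n,k}$ as the complement of a disjoint union of complete graphs. By construction, $X(\mathcal{T}_{n,k}) = \Ind(\overline{\mathcal{T}_{n,k}}) = \Ind\bigl(\bigsqcup_{i=1}^k K_{n_i}\bigr)$, where $n_i$ is as in \cref{def:turan}. So the task reduces to computing the reduced Betti numbers of the independence complex of a disjoint union of complete graphs.

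First, I would dispose of the base building block. For any $m\geq 1$, every pair of distinct vertices of $K_m$ is adjacent, so $\overline{K_m}$ has no edges; hence $\Ind(K_m)$ is the discrete complex on $m$ points. Thus $\beta_0(\Ind(K_m)) = m-1$ and $\beta_i(\Ind(K_m)) = 0$ for $i\neq 0$ (including $i = -1$, since $\Ind(K_m)$ is nonempty). This also correctly handles the degenerate case $m=1$, giving $\beta_0 = 0$.

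Next, I would induct on the number $k$ of components. Assume inductively that $\Ind\bigl(\bigsqcup_{i=1}^{j} K_{n_i}\bigr)$ has all reduced Betti numbers vanishing except $\beta_{j-1} = \prod_{i=1}^{j}(n_i-1)$. Applying \cref{lem:disjointunion} with $G = \bigsqcup_{i=1}^{j} K_{n_i}$ and $H = K_{n_{j+1}}$ gives
\[
\beta_\ell\Bigl(\Ind\Bigl(\bigsqcup_{i=1}^{j+1} K_{n_i}\Bigr)\Bigr) \;=\; \sum_{\substack{a,b\geq -1\\ a+b=\ell-1}} \beta_a(\Ind(G))\,\beta_b(\Ind(H)).
\]
By the inductive hypothesis and the base case, the only nonvanishing contribution comes from $a=j-1$ and $b=0$, i.e.\ $\ell = j$. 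This yields $\beta_j = \prod_{i=1}^{j+1}(n_i-1)$, closing the induction.

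Finally, I would substitute the explicit values $n_i = \lceil n/k\rceil$ for $1\leq i\leq n'$ and $n_i = \lfloor n/k\rfloor$ otherwise from \cref{def:turan}, producing the stated formula $(\lceil n/k\rceil - 1)^{n'}\cdot(\lfloor n/k\rfloor - 1)^{k-n'}$ in degree $k-1$. There is no real obstacle here; the only points requiring a sentence of care are the conventions on reduced homology in degree $-1$ (handled by observing that each $\Ind(K_{n_i})$ is nonempty) and the identification $X(G) = \Ind(\overline{G})$, which is definitional.
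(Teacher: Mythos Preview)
Your proposal is correct and follows essentially the same approach as the paper: identify $X(\mathcal{T}_{n,k}) = \Ind\bigl(\bigsqcup_{i=1}^k K_{n_i}\bigr)$, compute the Betti numbers of $\Ind(K_m)$ directly, and then induct on $k$ via \cref{lem:disjointunion}, splitting off one complete-graph component at a time. The paper's proof is nearly identical, differing only in cosmetic details of presentation.
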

\begin{proof}
    We use the notation from Definition \ref{def:turan}.
    First, observe that 
     \[\beta_j(\Ind(K_{n_i})) = \begin{cases}
        n_i -1 & \text{if $j=0$}\\
        0 & \text{otherwise.}
        \end{cases}
        \]
We now work by induction on $k$. For $k=1$, we get $X(\mathcal{T}_{n,1}) = \Ind(K_n)$, and thus $\beta_0(\mathcal{T}_{n,1}) = n-1$ and $\beta_j$ is trivial otherwise. This covers the base case. Now, assume that the statement holds for all $j<k$. We get that, 
\[X(\mathcal{T}_{n,k}) = \Ind\bigl(\overline{\mathcal{T}_{n,k}}\bigr) = \Ind\left(\left(\bigsqcup_{i=1}^{k-1} K_{n_i}\right) \sqcup K_{n_k}\right).\]

and by \cref{lem:disjointunion}, 
\begin{align*}
    \beta_j(\mathcal{T}_{n,k}) &= \sum_{a,b\geq -1; a+b=j-1} \beta_a(\Ind(\sqcup_{i=1}^{k-1}n_i))\beta_b(\Ind(K_{n_k})) = \beta_{j-1}(\Ind(\sqcup_{i=1}^{k-1}n_i))(n_k-1)\\
    &= \prod_{i=1}^k (n_i-1)
    \end{align*}
where the last equality follows from the induction hypothesis. The result now follows from the description of the $n_i$'s in the  graph.\qedhere
\end{proof}

\subsection{Proof of Lemma \ref{lem:maxprod}}
\label{sec:maxprodproof}

\begin{lemma}[\cref{lem:maxprod}]
    Let $S$ be a positive integer, and $x_i\geq 1$ be integers satisfying $\sum_{i=1}^n x_i=S$. Then, 
 $\prod_{i=1}^n (x_i-1)\leq \prod_{i=1}^n (y_i-1)$ where
    \begin{equation}\label{eq:prodappendix}
    y_i = 
    \begin{cases}
        \lceil S/n\rceil& \text{ if } 1\leq i\leq S\bmod n\\
        \lfloor S/n\rfloor & \text{otherwise.}
    \end{cases}
    \end{equation}
\end{lemma}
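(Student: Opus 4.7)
The plan is to prove the lemma by a standard smoothing (or swap) argument, showing that any non-balanced configuration $(x_1,\dots,x_n)$ can be transformed into one strictly closer to the balanced configuration $(y_1,\dots,y_n)$ without decreasing the product $\prod_{i=1}^n(x_i-1)$. Since the balanced configuration is the unique fixed point of this process (up to permutation), the lemma will follow.

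First, I would treat the degenerate case separately. If some $x_i=1$, then $\prod_i (x_i-1)=0$; since $S\geq n$ (because each $x_i\geq1$), every $y_i\geq 1$, so $\prod_i(y_i-1)\geq 0$, and the inequality is trivial. So I may assume all $x_i\geq 2$, in which case every factor $x_k-1$ is strictly positive.

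Next, the key computational step: suppose $x_i\leq x_j-2$ for some indices $i\neq j$. Replace $(x_i,x_j)$ by $(x_i+1,x_j-1)$; this preserves the sum $S$ and leaves each entry $\geq 1$. A direct calculation gives
\[
(x_i+1-1)(x_j-1-1)-(x_i-1)(x_j-1)=x_i(x_j-2)-(x_i-1)(x_j-1)=x_j-x_i-1\geq 1,
\]
so the product of these two factors strictly increases, and since all other factors $x_k-1$ are positive and unchanged, the full product $\prod_k (x_k-1)$ strictly increases as well.

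Finally, I would argue termination. The potential $\Phi(x)=\sum_i x_i^2$ strictly decreases under such a swap:
\[
(x_i+1)^2+(x_j-1)^2-x_i^2-x_j^2 = 2(x_i-x_j)+2 \leq -2,
\]
so the swap process halts after finitely many steps. At termination, no two coordinates differ by more than $1$, which forces the multiset $\{x_i\}$ to coincide with $\{y_i\}$ from \eqref{eq:prodappendix} (since the sum is $S$ and there are $n$ entries). Because each swap weakly increases $\prod_i(x_i-1)$, the original product is at most $\prod_i(y_i-1)$, proving the lemma. The only mildly delicate point is handling the $x_i=1$ boundary case up front; once that is dispatched, the smoothing argument is routine.
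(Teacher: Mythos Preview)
Your proof is correct and follows essentially the same smoothing/swap argument as the paper: both show that if two coordinates differ by at least $2$, then moving one unit from the larger to the smaller does not decrease $\prod_i(x_i-1)$, forcing the optimum to be the balanced tuple. Your version is slightly more detailed (explicit treatment of the $x_i=1$ case and an explicit termination potential), whereas the paper argues directly that a maximizer exists over the finite domain and must be balanced.
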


\begin{proof}
Let \[M=\max\left\{\prod_{i=1}^n x_i : (x_1, \ldots, x_n) \in \mathbb{N}^n, \sum_{i=1}^n x_i = S\right\} .\]
Since we are optimizing over a finite domain, this $M$ exists. Observe that if $(x_1, \ldots, x_n)$ attains this maximum, then $|x_i-x_j|\leq 1$ for all $i,j$. If not, say, $x_i>x_j+1$, then replacing $x_i$ with $x_i-1$ and $x_j$ with $x_j+1$ would yield a larger product as $(x_i-2)x_j = (x_i-1)(x_j-1) + (x_i-x_j-1)$. Hence, $(x_1,\ldots,x_n)$ should be some permutation of $(y_1,\ldots,y_n)$ as in \eqref{eq:prodappendix}.
\end{proof}
\subsection{Proof of Lemma \ref{lem:He}}
\label{sec:proofHe}
\begin{lemma}[\cref{lem:He}]
For $e$ edges, let $m$ be maximal such that $\mathcal{T}_{m,k+1}$ is a subgraph of $H_e$. Then, \[\BF_k\bigl(H_e\bigr) = \BF_k\bigl(\mathcal{T}_{m,k+1}\bigr) + \BF_{k-1}\bigl(\mathcal{T}_{e-e_m^{k+1}, k}\bigr).\]
In particular, for $e_m^{k+1}\leq e_1 < e_2 < e_{m+1}^{k+1}$,
we have that \[\BF_k(H_{e_1+1}) - \BF_k(H_{e_1})\leq \BF_k(H_{e_2+1}) - \BF_k(H_{e_2}).\]
\end{lemma}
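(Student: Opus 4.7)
The plan is to establish the formula by an exact homological computation using Mayer--Vietoris, then derive the monotonicity of increments from the product formula in \cref{prop:turanbetti}.

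First, I will identify the relevant structure of $H_e$. For $e_m^{k+1}\leq e < e_{m+1}^{k+1}$, the co-lexicographic ordering of \cref{def:H} implies that $H_e$ is obtained from $\mathcal T_{m,k+1}$ (on vertices $v_1,\ldots,v_m$) by adjoining $v_{m+1}$ with the first $t := e - e_m^{k+1}$ edges to vertices $v_i$ ($i<m+1$) lying outside the partition class $a$ of $v_{m+1}$, in increasing order of $i$. The key combinatorial observation is that each window $\{j(k+1)+1,\ldots,(j+1)(k+1)\}$ contains exactly one vertex from each partition class, so skipping class $a$ picks up exactly one vertex from each of the remaining $k$ classes per window. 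Consequently the first $t$ selected neighbors distribute with $\lceil t/k\rceil$ or $\lfloor t/k\rfloor$ vertices per class, matching the partition sizes of $\mathcal T_{t,k}$ from \cref{def:turan}. Since the induced subgraph of $\mathcal T_{m,k+1}$ on $N_{H_e}(v_{m+1})$ is the complete $k$-partite graph on these vertices, I conclude that $\lk_{X(H_e)}(v_{m+1}) = X(\mathcal T_{t,k})$.

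I will then apply Mayer--Vietoris to the decomposition $X(H_e) = \Delta \cup \stt_{X(H_e)}(v_{m+1})$, where $\Delta$ is the subcomplex of simplices of $X(H_e)$ not containing $v_{m+1}$. Because isolated vertices do not affect reduced homology in positive degree, for $k\geq 1$ I may identify $\tilde H_k(\Delta) = \tilde H_k(X(\mathcal T_{m,k+1}))$ and $\tilde H_{k-1}(\Delta) = \tilde H_{k-1}(X(\mathcal T_{m,k+1}))$; both vanish except in degree $k$ by \cref{prop:turanbetti} (for $k=1$ this reduces to connectedness of $\mathcal T_{m,2}$). The star is a cone, hence contractible. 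Similarly, $\tilde H_k(X(\mathcal T_{t,k})) = 0$ by \cref{prop:turanbetti}. The Mayer--Vietoris long exact sequence therefore collapses to
\[0 \to \tilde H_k\bigl(X(\mathcal T_{m,k+1})\bigr) \to \tilde H_k\bigl(X(H_e)\bigr) \to \tilde H_{k-1}\bigl(X(\mathcal T_{t,k})\bigr) \to 0,\]
and taking dimensions yields $\BF_k(H_e) = \BF_k(\mathcal T_{m,k+1}) + \BF_{k-1}(\mathcal T_{t,k})$.

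For the monotonicity, the formula reduces the desired inequality (within a fixed $m$-range) to showing that $\delta(t) := \BF_{k-1}(\mathcal T_{t+1,k}) - \BF_{k-1}(\mathcal T_{t,k})$ is non-decreasing in $t$. Writing $t = qk+r$ with $0 \leq r \leq k-1$, \cref{prop:turanbetti} gives $\BF_{k-1}(\mathcal T_{t,k}) = q^r(q-1)^{k-r}$ for $q\geq 1$, from which a short calculation produces $\delta(qk+r) = q^r(q-1)^{k-r-1}$. For fixed $q\geq 2$ this is increasing in $r$ with consecutive ratio $q/(q-1)$; it is continuous across the quotient transition, since $\delta(qk+k-1) = q^{k-1} = \delta((q+1)k)$; and it vanishes identically when $q\leq 1$. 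Hence $\delta$ is non-decreasing. The main obstacle is the combinatorial identification of the link with $X(\mathcal T_{t,k})$: one must verify carefully that the prescribed edge ordering produces exactly the balanced distribution of neighbors across the $k$ remaining partitions required for the induced subgraph to match $\mathcal T_{t,k}$ on the nose. Once that is done, the rest is routine, as the Betti-number vanishings from \cref{prop:turanbetti} make the Mayer--Vietoris sequence collapse to a short exact sequence.
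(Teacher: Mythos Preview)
Your argument is essentially identical to the paper's: both remove the newest vertex $v_{m+1}$, identify $\lk_{X(H_e)}(v_{m+1})\cong X(\mathcal T_{t,k})$ and $X(H_e)-v_{m+1}\cong X(\mathcal T_{m,k+1})$, and use the Betti vanishings of \cref{prop:turanbetti} to collapse Mayer--Vietoris to a short exact sequence; the paper's monotonicity argument (the increment equals $\prod_{i\neq j}(v_i-1)$ when a vertex is added to partition $j$) is your formula $\delta(qk+r)=q^r(q-1)^{k-r-1}$ in different clothing. One small slip: $\delta$ does not vanish identically for $q=1$ (indeed $\delta(2k-1)=1^{k-1}=1$), but monotonicity is unaffected since the values for $q=1$ are $0,\ldots,0,1$ and then $\delta(2k)=1$.
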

\begin{proof}
    Write $K=X(H_e)$, and let $v$ be a vertex of minimal degree in $K$. The key observation is that $K-v = X(\mathcal{T}_{m,k+1})$  and that $\lk_K(v) = X(\mathcal{T}_{e-e_m^{k+1}, k})$. From Mayer--Vietoris gives (see proof of \cref{lem:MV-lk}), we have the following exact sequence
    \[ \cdots\to \tilde{H}_{k}(\lk_K(v)) \to \tilde{H}_{k}(K-v)\xrightarrow{g} \tilde{H}_{k}(K) \xrightarrow{f} \tilde{H}_{k-1}(\lk_K(v)) \xrightarrow{h} \tilde{H}_{{k-1}}(K-v) \to\cdots.\]
    From \cref{prop:turanbetti}, we have $\beta_{k-1}(K-v)=0$ and $\beta_k(\lk_K(v))=0$. It follows that 
    \[\beta_k(K) = \beta_k(K-v) + \beta_{k-1}(\lk_K(v))\]
    and so
    \[\BF_k\bigl(H_e\bigr) = \BF_k\bigl(\mathcal{T}_{m,k+1}\bigr) + \BF_{k-1}\bigl(\mathcal{T}_{e-e_m^{k+1}, k}\bigr).\]

To see second part, it suffices to show that 
\[ \BF_{k-1}(\mathcal{T}_{a,k}) -  \BF_{k-1}(\mathcal{T}_{a-1,k}) \leq \BF_{k-1}(\mathcal{T}_{b,k}) -  \BF_{k-1}(\mathcal{T}_{b-1,k}), \]
for $a<b$. But this is immediate from the description of the homology of the Turan graphs (\cref{prop:turanbetti}). Specifically, $\BF_{k-1}(\mathcal{T}_{a,k})$ is the product $\prod_{i=1}^k (v_i-1)$ where $v_i$ is the number of vertices in the $i$-th partition of $\mathcal{T}_{a,k}$. Assume $\mathcal{T}_{a,k}$ is obtained from $\mathcal{T}_{a-1,k}$ by adding a vertex in partition $j$. Then, 
\[
\BF_{k-1}(\mathcal{T}_{a,k}) -  \BF_{k-1}(\mathcal{T}_{a-1,k}) =  \prod_{i=1, i\neq j}^k (v_i-1).\]
Since the number of vertices in each column of $\mathcal{T}_{b,k}$ is at least $v_i$, the result follows. 
\end{proof}

\subsection{Details for the proof of Theorem \ref{thm:lies}}
\label{app:lies}
Let $n\geq4$ be even, write $p\coloneqq n/2$, and let $e_n\coloneqq |E(\mathcal T_{n,2})|$. Consider an edgewise filtration $\mathcal G=\bigl\{G_c\bigr\}_{c=e_n+1}^{\binom{n-1}2+1}$ of $G_{\binom{n-1}2+1}$ (cf. \eqref{eq:grensgraaf}), starting from the Turán graph with one added edge, that satisfies the properties of Lemma \ref{lem:completegraphs}, that all optimal filtrations satisfy.

Any graph $G$ that we consider contains the Turán graph $\mathcal T_{n,2}$ as a subgraph. Let
$V_1$ and $V_2$ be the partition classes of this Turán graph $\mathcal T_{n,2}$ that is a subgraph of $G$. Throughout the subsection, we write $V_j$ instead of $G|_{V_j}$.

Let $i=1,\ldots,p-2$. When in $V_j$ we construct $K_{i+1}$ out of $K_{i}$, we add $i$ edges. While adding each of those edges, the product $(d_1-1)(d_2-1)$ from \eqref{eq:productcomponents} remains constant. Moreover, in this case, $d_j-1=p-i-1$, as $V_j$ contains $p-(i+1)$ isolated vertices and one component with $i+1$ vertices, containing $K_{i}$ as a subgraph. Thus,
\begin{equation}\label{eq:appeven}
    \sum_{c=e_n+1}^{\binom{n-1}2+1}\BF_1(G_c) = \sum_{i=1}^{p-2}a_i \cdot i\cdot (p-i-1) + \sum_{i=1}^{p-2}b_i \cdot i \cdot (p-i-1),
\end{equation}
where $a_i$ and $b_i$ depend on the filtration (but we suppress this in our notation): the first sum (with the $a_i$) corresponds to adding edges to $V_1$, and the second sum (with the $b_i$) corresponds to adding edges to $V_2$. We shall use this characterization throughout this subsection.

We now illustrate how to compute $a_i$ and $b_i$ from \eqref{eq:appeven} with two examples.
\begin{example}
    For the filtration from Figure \ref{fig:lb}, we have $n=10$, so $p=5$. We first add an edge to $V_1$ to form a $K_2$, and at that moment, $V_2$ consists of $p$ components. Hence, $a_1=p-1=4$. Then, we add an edge to $V_2$ (to form a $K_2$), and at that moment, $V_1$ consists of $p-1=4$ components, so $b_1=3$. Then, we form a $K_3$ in $V_1$, and the corresponding coefficient $a_2=p-2=3$. We continue like this to see that $b_2=2$, $a_3=2$ and $b_3=1$.

    Furthermore, for the filtration from Figure \ref{fig:exrep}, we find the following coefficients in the sums from \eqref{eq:appeven}:
    \[
    a_1= a_2 = 3, a_3 = 1, b_1 = 4, b_2 =b_3=2.
    \]
\end{example}
We have the following lemma, which proves our assumption from Remark \ref{rmk:leftahead}.
\begin{lemma}\label{lem:leftalwaysbigger}
Let $n\geq 4$ be even.
    Let $\mathcal G=\bigl\{G_c\bigr\}_{c=e_n+1}^{\binom{n-1}2+1}$ be an edgewise filtration of $G_{\binom{n-1}2+1}$ (cf. \eqref{eq:grensgraaf}) starting from the Turán graph with one added edge and let $(l_1,r_1),(l_2,r_2),\ldots, (l_{p-1},r_{p-1})$ be a representation as in \eqref{eq:reducednot}. We may assume that $l_i\geq r_i$ for all $i=1,\ldots,p-1$.
\end{lemma}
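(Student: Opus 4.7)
The plan exploits the $\mathbb{Z}/2$-symmetry available in the even case.  Since $|V_1|=|V_2|=p=n/2$, the transposition $\tau$ swapping the two partition classes of $\mathcal T_{n,2}$ is an automorphism of $\mathcal T_{n,2}$, and in particular acts by graph isomorphism on any graph $G\supseteq\mathcal T_{n,2}$.  At the level of milestone graphs this means that
\[
(K_{l}\sqcup(p-l)K_1)\vee(K_{r}\sqcup(p-r)K_1)\;\cong\;(K_{r}\sqcup(p-r)K_1)\vee(K_{l}\sqcup(p-l)K_1),
\]
so the isomorphism class of a milestone depends only on the unordered pair $\{l,r\}$.  No analogous symmetry exists for odd $n$ since then $|V_1|\neq|V_2|$, which is precisely why the lemma is restricted to the even case.

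Given an arbitrary edgewise filtration $\mathcal G=\{G_c\}$ with representation $(l_1,r_1),\ldots,(l_{p-1},r_{p-1})$, I would construct a fiberwise isomorphic edgewise filtration $\mathcal G^*$ whose representation $(l_i^*,r_i^*)$ satisfies $l_i^*\geq r_i^*$ for every $i$.  Since fiberwise isomorphism preserves every Betti number, $\BT_1(\mathcal G^*)=\BT_1(\mathcal G)$ and we may freely substitute $\mathcal G^*$ for $\mathcal G$ throughout the remainder of the proof of \cref{thm:lies}.  The construction walks through $\mathcal G$ graph by graph and, at each index $c$, chooses the labelling of the two partition classes so that the larger of the two complete subgraphs lies in $V_1$; the resulting sequence of isomorphism classes is then realized as a concrete edgewise filtration on a fixed vertex set by a suitable re-ordering of the single-edge additions between consecutive milestones of $\mathcal G^*$.

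The main obstacle will be verifying that this selective relabelling yields a valid edgewise filtration, i.e.\ that $|E(G_{c+1}^*)\setminus E(G_c^*)|=1$ for every $c$.  The key input is \cref{lem:completegraphs}: between two consecutive milestones the intermediate graphs have a rigid shape---one side is $K_m\sqcup$(isolated vertices) and the other is $K_{m'}\sqcup$(isolated vertices)$\sqcup$(one partially attached vertex)---and the sequence of isomorphism classes along a transition is determined by the sizes of the complete subgraphs at its two endpoints together with which side is currently growing, all of which are $\tau$-invariant.  Consequently the relabelled edge additions can always be realized by single edges on the fixed vertex set, yielding $\mathcal G^*$.  The representation of $\mathcal G^*$ need not have the same milestones as that of $\mathcal G$---an adjacent pair of milestones of $\mathcal G$ with $l<r$ and $l>r$ can, for instance, be merged into a single milestone of $\mathcal G^*$ with $l^*=r^*$---but every milestone of $\mathcal G^*$ satisfies $l_i^*\geq r_i^*$ by construction, which is exactly the content of the lemma.
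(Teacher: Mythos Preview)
Your approach is correct and coincides with the paper's: both exploit the involution $\tau$ swapping the two equal-sized partition classes to produce a fiberwise isomorphic filtration with $l_i\geq r_i$. The paper carries this out more concretely by first refining the representation so that consecutive tuples differ by $1$ in $l+r$, and then swapping $(l_i,r_i)\mapsto(r_i,l_i)$ on each maximal run of indices with $r_i>l_i$; such a run is necessarily bounded on both sides by milestones with $l=r$, where $\tau$ fixes the graph, so the swapped segment glues to the unswapped part without any need for your ``re-ordering'' step.
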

\begin{proof}
    We may assume that the representation $(l_1,r_1),(l_2,r_2),\ldots, (l_e,r_e)$ of $\mathcal G$ is such that $l_i+r_i=l_{i-1}+r_{i-1}+1$ for all $i=2,\ldots,d$.
    Whenever $i_1$ and $i_2$ are such that
    \[ i_2\geq i_1, l_{i_1-1}=r_{i_1-1} \text{ and } l_{i_2+1}=r_{i_2+1}, \text{ but } r_i>l_i \text{ for } i=i_1,\ldots,i_2,\]
    then replacing $(l_{i_1},r_{i_1}),\ldots,(l_{i_2},r_{i_2})$ with $(r_{i_1},l_{i_1}),\ldots,(r_{i_2},l_{i_2})$ produces a new filtration $\mathcal G'$ 
    fiberwise isomorphic to $\mathcal G$, preserving total persistence.
\end{proof}
When $n$ is odd, one can prove an analogous lemma. In this case, we can write $p\coloneqq \lfloor n/2\rfloor$, which means that $|V_1|=p+1$ and $|V_2|=p$. Hence, the equation from \eqref{eq:appeven} becomes
\begin{equation}\label{eq:appodd}
    \sum_{c=e_n+1}^{\binom{n-1}2+1}\BF_1(G_c) = \sum_{i=1}^{p-1}a_i \cdot i\cdot (p-i) + \sum_{i=1}^{p-2}b_i \cdot i \cdot (p-i-1).
\end{equation}
We have the following lemma for odd $n$.
\begin{lemma}\label{lem:leftalwaysbiggerodd}
    Let $n\geq4$ be odd, and let $p\coloneqq \lfloor n/2\rfloor$.
    Let $\mathcal G=\bigl\{G_c\bigr\}_{c=e_n+1}^{\binom{n-1}2+1}$ be an edgewise filtration of $G_{\binom{n-1}2+1}$ (cf. \eqref{eq:grensgraaf}) starting from the Turán graph with one added edge and let $(l_1,r_1),(l_2,r_2),\ldots, (l_{p-1},r_{p-1})$ represent the filtration as in \eqref{eq:reducednot}. Then, we may assume that $l_i\geq r_i$ for all $i=1,\ldots,p-1$.
\end{lemma}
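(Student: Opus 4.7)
The strategy parallels the proof of Lemma \ref{lem:leftalwaysbigger} for the even case, but because $|V_1| = p+1 \neq p = |V_2|$, swapping the roles of $V_1$ and $V_2$ is no longer an isomorphism and the total persistence can actually change. I would therefore compute the change precisely and show that it is non-negative whenever the swap converts an initial $r > l$ into $l \geq r$.

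First I would normalize the representation so that each stage increments exactly one of $l$ or $r$ by $1$; the filtration then corresponds to a monotone lattice path from $(1,1)$ to $(p, p-1)$, where each step is either an R-move (increment $l$) or a U-move (increment $r$). The condition $l_i \geq r_i$ says that the path stays weakly below the diagonal $\{l = r\}$. Since the endpoint $(p, p-1)$ already lies below the diagonal, any path that crosses above must return; so I could isolate a maximal \emph{excursion} above the diagonal: a sub-path from $(\alpha, \alpha)$ to $(\beta, \beta)$ whose interior stays strictly above the diagonal. Such an excursion necessarily begins with a U-move at $(\alpha, \alpha)$ and ends with an R-move at $(\beta-1, \beta)$.

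I would then reflect the excursion across the diagonal, swapping every R with a U. The reflected sub-path runs from $(\alpha, \alpha)$ to $(\beta, \beta)$ strictly below the diagonal and gives a valid representation (the bounds $r_i' \leq p-1$ and $l_i' \leq p$ follow from $l_i \leq r_i - 1 \leq p - 2$ inside the excursion). Following the derivation leading to \eqref{eq:appodd}, an R-move at state $(l, r)$ contributes $l(p-l)(p-r)$ and a U-move contributes $r(p-l+1)(p-r-1)$; a direct calculation then gives the per-step change (reflected minus original)
\begin{align*}
\Delta_R(l,r) &= l\bigl[(p-r+1)(p-l-1) - (p-l)(p-r)\bigr] = l(r-l-1), \\
\Delta_U(l,r) &= r\bigl[(p-r)(p-l) - (p-l+1)(p-r-1)\bigr] = r(r-l+1).
\end{align*}
Along the excursion $r \geq l$ at every source state, and moreover $r > l$ at every R-move source (otherwise the move would exit the excursion below the diagonal). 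Hence $\Delta_U \geq r > 0$ and $\Delta_R \geq 0$ at every step, so $\BT_1(\mathcal{G}') - \BT_1(\mathcal{G}) \geq 0$. Iterating this reflection over all excursions yields a filtration with $l_i \geq r_i$ for every $i$ and total persistence at least that of $\mathcal{G}$.

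The main obstacle is the arithmetic identity underlying the per-step differences: the cancellations in $(p-r+1)(p-l-1) - (p-l)(p-r)$ and in its U-analogue hinge precisely on the size asymmetry $(p-l+1)$ versus $(p-r)$ arising from $|V_1| - |V_2| = 1$, and this asymmetry is exactly what makes reflection favorable above the diagonal. A small structural check is also needed: at a source state with $r = l$ the formula $\Delta_R = l(r - l - 1)$ would be negative, but an R-step from $r = l$ would exit the excursion, so such a move simply cannot appear inside it.
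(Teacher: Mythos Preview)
Your proof is correct and follows essentially the same strategy as the paper: isolate a maximal excursion where $r_i > l_i$, reflect it across the diagonal, and verify that the change in total persistence is non-negative. The only difference is bookkeeping---the paper aggregates by clique size via the coefficients $a_i,b_i$ of \eqref{eq:appodd} to obtain $\sum_i i(b_i-1-a_i)\geq 0$, whereas you compute the per-step differences $\Delta_R=l(r-l-1)$ and $\Delta_U=r(r-l+1)$ directly; these are two equivalent ways of unpacking the same calculation.
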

\begin{proof}
    Suppose that for some part of the filtration (where we use the same notation as in the proof of Lemma \ref{lem:leftalwaysbigger}), say from $(l_{i_1},r_{i_1})$ to $(l_{i_2},r_{i_2})$, we have that $r_i>l_i$ for all $i=i_1,\ldots,i_2$. Then, we can make a filtration $\mathcal G'$ with 
    $(l'_{i_1},r'_{i_1}),\ldots,(l'_{i_2},r'_{i_2})=(r_{i_1},l_{i_1}),\ldots,(r_{i_2},l_{i_2})$. Then, we see that, for $\mathcal G$, the corresponding part of the sums from \eqref{eq:appodd} is equal to
    \begin{align*}
        \sum_{i=i_1-1}^{i_2-1}a_i \cdot i\cdot (p-i) + \sum_{i=i_1-1}^{i_2-1}b_i \cdot i \cdot (p-i-1),
    \end{align*}
    and for $\mathcal G'$, it then becomes
    \begin{align*}
        &\sum_{i=i_1-1}^{i_2-1}(a_i+1) \cdot i\cdot (p-i-1) + \sum_{i=i_1-1}^{i_2-1}(b_i-1) \cdot i \cdot (p-i) \\
        &= \sum_{i=i_1-1}^{i_2-1}i\cdot \bigl(a_i(p-i) - a_i + p-i-1\bigr) + \sum_{i=i_1-1}^{i_2-1}i\cdot \bigl( b_i(p-i-1)-(p-i-1)+b_i-1  \bigr)\\
        &= \sum_{i=i_1-1}^{i_2-1}(a_i+1) \cdot i\cdot (p-i-1) + \sum_{i=i_1-1}^{i_2-1}(b_i-1) \cdot i \cdot (p-i)\\
        &+ \sum_{i=i_1-1}^{i_2-1}i\bigl(-a_i+(p-i-1)-(p-i-1)+b_i-1\bigr)\\
        &= \sum_{i=i_1-1}^{i_2-1}a_i \cdot i\cdot (p-i) + \sum_{i=i_1-1}^{i_2-1}b_i \cdot i \cdot (p-i-1) + \sum_{i=i_1-1}^{i_2-1}i\bigl(b_i-1-a_i\bigr).
    \end{align*}
    In particular, the difference between this part of the sums for $\mathcal G'$ and $\mathcal G$ is equal to
    \[
    \sum_{i=i_1-1}^{i_2-1}i\bigl(b_i-1-a_i\bigr) \geq0,
    \]
    because by the assumption, for $i=i_1,\ldots,i_2$, the complete graph in $V_2$ is not smaller than the one in $V_1$, which means that $a_i\leq b_i-1$ for all $i=i_1,\ldots,i_2$ (because, the bigger the complete graph in $V_2$ when making $K_i$ in $V_1$, the smaller the coefficient $a_i$). The $-1$ comes from the fact that $V_1$ has one vertex more than $V_2$, which means that the number of components in $V_1$ is one bigger than the number of components in $V_2$ if both components contain a $K_i$ of the same size.
\end{proof}
Returning to the even case, we show Equations \eqref{eq:apptoprove1} and \eqref{eq:apptoprove2} in Lemmas \ref{lem:eq7} and \ref{lem:eq8}, respectively.
\begin{lemma}\label{lem:eq7}
    Let $n\geq4$ be even and let $j_n\coloneqq \lfloor (3n-2)/8\rfloor$.
    Let $\mathcal G$ be a filtration with alternation depth $d_{\mathcal G}>j_n$. Its representation, substituting $n$ by $2p$, is of the form
    \[
    (1,1),\ldots,(t,u),(v,w),\bigl(d_{\mathcal G},d_{\mathcal G}\bigr),\bigl(d_{\mathcal G}+1,d_{\mathcal G}+1\bigr),\ldots,\bigl(p-1,p-1\bigr) \text{ with } \bigl(v,w\bigr)\neq \bigl(d_{\mathcal G}-1,d_{\mathcal G}-1\bigr).
    \]
    Since we assumed that $v\geq w$, we must have $w<d_{\mathcal G}-1$. We transform $\mathcal G$ into a filtration $\mathcal G'$ which is the same as $\mathcal G$, except that we add an extra alternation step. We consider two cases.
    \begin{itemize}
        \item Case $1$: $v<d_{\mathcal G}-1$. In this case, $\mathcal G'$ is represented by
        \[
        (1,1),\ldots (t,u),(v,w),\bigl(d_{\mathcal G}-1,d_{\mathcal G}-1\bigr), \bigl(d_{\mathcal G},d_{\mathcal G}\bigr),\bigl(d_{\mathcal G}+1,d_{\mathcal G}+1\bigr),\ldots,\bigl(p-1,p-1\bigr).
        \]
        \item Case $2$: $v=d_{\mathcal G}-1$. In this case, $\mathcal G'$ is represented by
        \[
        (1,1),\ldots (t,u),\bigl(v=d_{\mathcal G}-1,d_{\mathcal G}-1\bigr), \bigl(d_{\mathcal G},d_{\mathcal G}\bigr),\bigl(d_{\mathcal G}+1,d_{\mathcal G}+1\bigr),\ldots,\bigl(p-1,p-1\bigr).
        \]
    \end{itemize}
    Then, we have that
    \begin{equation}\label{eq:alternationdepth}
        \BT_1\bigl(\mathcal G'\bigr) - \BT_1\bigl(\mathcal G\bigr) = \frac16\bigl(d_{\mathcal G}-1-w\bigr)\bigl(d_{\mathcal G}-w\bigr)\bigl(4d_{\mathcal G}+2w-3p-2\bigr).
    \end{equation}
\end{lemma}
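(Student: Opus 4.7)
The plan is to compute $\BT_1(\mathcal G')-\BT_1(\mathcal G)$ directly from the decomposition \eqref{eq:appeven}. Write $d=d_{\mathcal G}$. The key observation is that $\mathcal G$ and $\mathcal G'$ add exactly the same set of edges, so it is natural to group these edges into \emph{blocks}, where the $V_j$-block with parameter $i$ is the set of $i$ edges that completes $K_{i+1}$ out of the preceding $K_i$ in the class $V_j$. The blocks are the same combinatorial objects in both filtrations; only the moment at which each block is inserted can differ. Since the component count of the inactive class stays constant during a block, each $V_j$-block of parameter $i$ contributes $i(p-i-1)(d_{3-j}-1)$ to $\BT_1$, so the difference in total persistence comes entirely from blocks whose companion count changes between $\mathcal G$ and $\mathcal G'$.

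Tabulating the companion counts in both Case~$1$ and Case~$2$, I expect to find exactly two families of blocks affected. First, the $V_2$-blocks for $i=w,\ldots,d-2$ (which build $K_{w+1},\ldots,K_{d-1}$ in $V_2$) occur in $\mathcal G$ while $V_1=K_d$ but in $\mathcal G'$ while $V_1=K_{d-1}$, so $d_1-1$ is larger by one. Second, the $V_1$-block for $i=d-1$ (which builds $K_d$ in $V_1$) occurs in $\mathcal G$ while $V_2=K_w$ but in $\mathcal G'$ while $V_2=K_{d-1}$, so $d_2-1$ is smaller by $d-w-1$. All other blocks contribute identically, which yields
\[\BT_1(\mathcal G') - \BT_1(\mathcal G) = \sum_{i=w}^{d-2} i(p-i-1) - (d-1)(p-d)(d-w-1).\]

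It remains to verify the polynomial identity
\[\sum_{i=w}^{d-2} i(p-i-1) - (d-1)(p-d)(d-w-1) = \tfrac{1}{6}(d-1-w)(d-w)(4d+2w-3p-2).\]
Using $\sum_{i=1}^N i = N(N+1)/2$ and $\sum_{i=1}^N i^2 = N(N+1)(2N+1)/6$, the left-hand sum evaluates to $\tfrac{1}{6}\bigl[(d-2)(d-1)(3p-2d) - (w-1)w(3p-2w-2)\bigr]$; subtracting $(d-1)(p-d)(d-w-1)$ and expanding matches the right-hand side. The main obstacle is the combinatorial bookkeeping: one must carefully verify, block by block, that only the two families described above differ between $\mathcal G$ and $\mathcal G'$, and confirm that this holds in both Case~$1$ (where $(d-1,d-1)$ is inserted as a new tuple between $(v,w)$ and $(d,d)$) and Case~$2$ (where $(d-1,w)$ is simply replaced by $(d-1,d-1)$). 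The final algebraic identity is routine and can be verified by expansion.
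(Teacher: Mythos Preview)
Your proposal is correct and follows essentially the same approach as the paper. The paper writes out the full sums $\sum_c \BF_1(G_c)$ and $\sum_c \BF_1(G'_c)$ separately in each case and then subtracts, arriving at exactly your expression $\sum_{i=w}^{d-2} i(p-i-1) - (d-1)(p-d)(d-w-1)$ before simplifying to the closed form; your block-by-block bookkeeping is just a slightly more streamlined way of isolating the same two families of terms that survive the subtraction.
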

\begin{proof}
    We first prove \eqref{eq:alternationdepth} for Case 1. In this case, we have
    \begin{align*}
        \sum_{c=e_n+1}^{\binom{n-1}2+1}\BF_1(G_c) = &\sum_{i=1}^{v-1}a_i\cdot i\cdot (p-i-1)+\sum_{i=1}^{w-1}b_i\cdot i\cdot (p-i-1) \\
        + &\sum_{i=v}^{d_{\mathcal G}-1}(p-w)\cdot i\cdot (p-i-1) + \sum_{i=w}^{d_{\mathcal G}-1}(p-d_{\mathcal G})\cdot i\cdot (p-i-1)\\
        + &\sum_{i=d_{\mathcal G}}^{p-2}i\cdot (p-i-1)\cdot (2p-2i-1), \text{ and }\\
        \sum_{c=e_n+1}^{\binom{n-1}2+1}\BF_1(G'_c) = &\sum_{i=1}^{v-1}a_i\cdot i\cdot (p-i-1)+\sum_{i=1}^{w-1}b_i\cdot i\cdot (p-i-1) \\
        + &\sum_{i=v}^{d_{\mathcal G}-2}(p-w)\cdot i\cdot (p-i-1) + \sum_{i=w}^{d_{\mathcal G}-2}(p-d_{\mathcal G}+1)\cdot i\cdot (p-i-1)\\
        + &\sum_{i=d_{\mathcal G}-1}^{p-2}i\cdot (p-i-1)\cdot (2p-2i-1).
    \end{align*}
    We see that
    \begin{align*}
        &\sum_{c=e_n+1}^{\binom{n-1}2+1}\BF_1(G'_c) - \sum_{c=e_n+1}^{\binom{n-1}2+1}\BF_1(G_c)\\
        &= (d_{\mathcal G}-1)(p-d_{\mathcal G})(-p+w-p+d_{\mathcal G}+2p-2(d_{\mathcal G}-1)-1) + \sum_{i=w}^{d_{\mathcal G}-2} i\cdot (p-i-1)\\
        &= (d_{\mathcal G}-1)(p-d_{\mathcal G})(w-d_{\mathcal G}+1) + \frac16\bigl(w-d_{\mathcal G}+1\bigr)\bigl(6p-2w-3pw+2w^2-4d_{\mathcal G}-3pd_{\mathcal G}+2wd_{\mathcal G}+2d_{\mathcal G}^2\bigr)\\
        &= \frac16\bigl(d_{\mathcal G}-1-w\bigr)\bigl(d_{\mathcal G}-w\bigr)\bigl(4d_{\mathcal G}+2w-3p-2\bigr).
    \end{align*}
    Furthermore, in case $2$, we see that
    \begin{align*}
        \sum_{c=e_n+1}^{\binom{n-1}2+1}\BF_1(G_c) &=\sum_{i=1}^{t-1}a_i\cdot i\cdot (p-i-1)+\sum_{i=1}^{u-1}b_i\cdot i\cdot (p-i-1) \\
        &+ \sum_{i=t}^{d_{\mathcal G}-2}(p-u)\cdot i\cdot (p-i-1)+\sum_{i=u}^{w-1}(p-d_{\mathcal G}+1)\cdot i\cdot (p-i-1) \\
        &+ (p-w)(d_{\mathcal G}-1)(p-d_{\mathcal G}) + \sum_{i=w}^{d_{\mathcal G}-1}(p-d_{\mathcal G})\cdot i\cdot (p-i-1)\\
        &+ \sum_{i=d_{\mathcal G}}^{p-2}i\cdot (p-i-1)\cdot (2p-2i-1),
    \end{align*}
    and
    \begin{align*}
        \sum_{c=e_n+1}^{\binom{n-1}2+1}\BF_1(G'_c) &= \sum_{i=1}^{t-1}a_i\cdot i\cdot (p-i-1)+\sum_{i=1}^{u-1}b_i\cdot i\cdot (p-i-1) \\
        &+ \sum_{i=t}^{d_{\mathcal G}-2}(p-u)\cdot i\cdot (p-i-1) + \sum_{i=u}^{d_{\mathcal G}-2}(p-d_{\mathcal G}+1)\cdot i\cdot (p-i-1)\\
        &+ \sum_{i=d_{\mathcal G}-1}^{p-2}i\cdot (p-i-1)\cdot (2p-2i-1).
    \end{align*}
    Also in this case, we see that
    \begin{align*}
        &\sum_{c=e_n+1}^{\binom{n-1}2+1}\BF_1(G'_c) - \sum_{c=e_n+1}^{\binom{n-1}2+1}\BF_1(G_c)\\
        &= \sum_{i=w}^{d_{\mathcal G}-2}\bigl(p-d_{\mathcal G}+1-(p-d_{\mathcal G})\bigr)\cdot i\cdot (p-i-1) + \bigl(-p+w-p+d_{\mathcal G}+2p-2(d_{\mathcal G}-1)-1\bigr)(d_{\mathcal G}-1)(p-d_{\mathcal G}) \\
        &= (d_{\mathcal G}-1)(p-d_{\mathcal G})(w-d_{\mathcal G}+1) + \sum_{i=w}^{d_{\mathcal G}-2}i\cdot (p-i-1)\\
        &= \frac16\bigl(d_{\mathcal G}-1-w\bigr)\bigl(d_{\mathcal G}-w\bigr)\bigl(4d_{\mathcal G}+2w-3p-2\bigr),
    \end{align*}
    analogously to Case $1$.
\end{proof}
Finally, we prove Equation \eqref{eq:apptoprove2} from Theorem \ref{thm:lies}.
\begin{lemma}\label{lem:eq8}
    Let $n\geq4$ be even.
    We let $\mathcal G$ be a filtration with representation
    \[
    (1,1),(j,k),(l,m),\ldots,(p-1,p-1) \text{ with $j\leq 3p/4-1$.}
    \]
    We transform $\mathcal G$ into a filtration $\mathcal G'$ such that its start is different from $\mathcal G$'s, but from the tuple $(l,m)$ onwards, $\mathcal G$ and $\mathcal G'$ are the same. We again consider two cases.
    \begin{itemize}
        \item Case $1$: $l>j+1$. In this case, $\mathcal G'$ is represented by
        $(1,1),(j+1,k),(l,m),\ldots,(p-1,p-1).$
        \item Case $2$: $l=j+1$. In this case, $\mathcal G'$ is represented by
        $(1,1),(j+1=l,m),\ldots,(p-1,p-1).$
    \end{itemize}
    In both cases, we have
    \begin{equation}\label{eq:biggerstart}
        \BT_1\bigl(\mathcal G'\bigr) - \BT_1\bigl(\mathcal G\bigr) = (k-1)\bigl(j(p-j-1)-\frac k6(3p-2k-2)\bigr).
    \end{equation}
\end{lemma}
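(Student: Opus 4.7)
The plan is to compute both $\BT_1(\mathcal{G})$ and $\BT_1(\mathcal{G}')$ using the decomposition \eqref{eq:appeven}, take their difference, and observe that almost all terms cancel. The approach mirrors the proof of Lemma \ref{lem:eq7}: explicitly identify the coefficients $a_i, b_i$ in both filtrations, note that they agree outside of a short initial range, and reduce to a standard power-sum identity.

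First, I would unpack the coefficients. Recall $a_i$ equals $d_2 - 1$ at the moment the filtration enlarges the clique in $V_1$ from $K_i$ to $K_{i+1}$, and analogously $b_i$ equals $d_1 - 1$ when $V_2$ is enlarged. For $\mathcal{G}$ represented by $(1,1),(j,k),(l,m),\ldots$, the initial phases give $a_i = p - 1$ for $1 \leq i \leq j - 1$ (since $V_2$ is still empty), then $b_i = p - j$ for $1 \leq i \leq k - 1$ (since $V_1$ contains $K_j$), then $a_i = p - k$ for $j \leq i \leq l - 1$ (since $V_2$ contains $K_k$). For $\mathcal{G}'$ in either case, these coefficients shift: the range with $a_i = p - 1$ extends to $i = j$, and the $b$-coefficients during $V_2$'s first growth phase drop to $p - j - 1$ because $V_1$ already contains $K_{j+1}$ rather than $K_j$. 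From the tuple $(l, m)$ onward the two filtrations coincide phase-by-phase, so all subsequent coefficients agree.

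With this in hand, the difference $\BT_1(\mathcal{G}') - \BT_1(\mathcal{G})$ reduces to exactly two contributions: the replacement of $a_j = p - k$ in $\mathcal{G}$ by $a_j = p - 1$ in $\mathcal{G}'$ (an excess of $k-1$ for the $i=j$ term of the first sum in \eqref{eq:appeven}), giving $(k-1) \cdot j \cdot (p - j - 1)$; and the uniform decrement of each $b_i$ for $1 \leq i \leq k - 1$ by $1$, giving a deficit $\sum_{i=1}^{k-1} i(p-i-1)$. A standard power-sum identity evaluates the deficit to $\tfrac{(k-1)k}{6}(3p - 2k - 2)$, and \eqref{eq:biggerstart} follows after factoring out $k - 1$.

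The main delicacy is in Case 2, where the collapse $l = j + 1$ merges two phases of $V_2$ growth in $\mathcal{G}$ (with multipliers $p - j$ for $i \leq k - 1$ and $p - j - 1$ for $i \geq k$) into a single phase in $\mathcal{G}'$ with multiplier $p - j - 1$ throughout. Verifying that this still produces precisely the same two coefficient changes as in Case 1, and no others, is where the bookkeeping must be done carefully; but because the common multiplier $p - j - 1$ on the two merged phases matches the uniform multiplier in $\mathcal{G}'$, the underlying arithmetic is identical in both cases.
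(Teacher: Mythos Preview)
Your proposal is correct and follows essentially the same approach as the paper: both compute the two total persistences via the decomposition \eqref{eq:appeven}, identify the coefficients $a_i,b_i$ phase by phase, and subtract to leave exactly the $a_j$-term and the $b_i$-terms for $1\le i\le k-1$, then evaluate $\sum_{i=1}^{k-1} i(p-i-1)=\tfrac{(k-1)k}{6}(3p-2k-2)$. The only difference is presentational---the paper writes out the full sums before subtracting, whereas you track coefficient changes directly---but the computation and the handling of the two cases are identical.
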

\begin{proof}
    We first prove \eqref{eq:biggerstart} for Case $1$. In this case, we have
    \begin{align*}
        \sum_{c=e_n+1}^{\binom{n-1}2+1}\BF_1(G_c) &= \sum_{i=1}^{j-1}(p-1)\cdot i\cdot (p-i-1) + \sum_{i=1}^{k-1}(p-j)\cdot i\cdot (p-i-1)\\
        &+ \sum_{i=j}^{p-2}a_i \cdot i\cdot (p-i-1) + \sum_{i=k}^{p-2}b_i \cdot i \cdot (p-i-1), \text{ and }\\
        \sum_{c=e_n+1}^{\binom{n-1}2+1}\BF_1(G'_c) &= \sum_{i=1}^{j}(p-1)\cdot i\cdot (p-i-1) + \sum_{i=1}^{k-1}(p-j-1)\cdot i\cdot (p-i-1)\\
        &+ \sum_{i=j+1}^{p-2}a_i \cdot i\cdot (p-i-1) + \sum_{i=k}^{p-2}b_i \cdot i \cdot (p-i-1).
    \end{align*}
    Since $a_j=p-k$, we see that
    \begin{align*}
        &\sum_{c=e_n+1}^{\binom{n-1}2+1}\BF_1(G'_c) - \sum_{c=e_n+1}^{\binom{n-1}2+1}\BF_1(G_c)\\
        &= (p-1)\cdot j\cdot (p-j-1) - \sum_{i=1}^{k-1}i\cdot (p-i-1) - (p-k)\cdot j\cdot (p-j-1)\\
        &= (k-1)\cdot j\cdot (p-j-1)-(k-1)\cdot \frac k6\cdot (3p - 2k - 2).
    \end{align*}
    For Case $2$, we have
    \begin{align*}
        \sum_{c=e_n+1}^{\binom{n-1}2+1}\BF_1(G_c) &= \sum_{i=1}^{j-1}(p-1)\cdot i\cdot (p-i-1) + \sum_{i=1}^{k-1}(p-j)\cdot i\cdot (p-i-1)\\
        &+(p-k)\cdot j\cdot (p-j-1) + \sum_{i=k}^{m-1}(p-j-1)\cdot i\cdot (p-i-1)\\
        &+ \sum_{i=j+1}^{p-2}a_i \cdot i\cdot (p-i-1) + \sum_{i=m}^{p-2}b_i \cdot i \cdot (p-i-1), \text{ and }\\
        \sum_{c=e_n+1}^{\binom{n-1}2+1}\BF_1(G'_c) &= \sum_{i=1}^{j}(p-1)\cdot i\cdot (p-i-1) + \sum_{i=1}^{m-1}(p-j-1)\cdot i\cdot (p-i-1)\\
        &+ \sum_{i=j+1}^{p-2}a_i \cdot i\cdot (p-i-1) + \sum_{i=m}^{p-2}b_i \cdot i \cdot (p-i-1).
    \end{align*}
    Also in this case, we see that
    \begin{align*}
        &\sum_{c=e_n+1}^{\binom{n-1}2+1}\BF_1(G'_c) - \sum_{c=e_n+1}^{\binom{n-1}2+1}\BF_1(G_c)\\
        &= (p-1)\cdot j\cdot (p-j-1) - (p-k)\cdot j\cdot (p-j-1) - \sum_{i=1}^{k-1}i\cdot (p-i-1) \\
        &= (k-1)\cdot j\cdot (p-j-1)-(k-1)\cdot \frac k6\cdot (3p - 2k - 2).
    \end{align*}
\end{proof}
We conclude with a final remark.
\begin{remark}
    The proof for the optimal filtration for odd $n$, as described in the statement of Theorem \ref{thm:lies}, is analogous to the proof of the even case. It relies on analogous versions of Lemmas \ref{lem:eq7} and \ref{lem:eq8}, using Equation \eqref{eq:appodd} instead of \eqref{eq:appeven}.
\end{remark}
\end{document}